\documentclass[11pt,reqno]{amsart}

\usepackage[utf8]{inputenc}
\usepackage[T1]{fontenc}
\usepackage{amsmath,amssymb,amsthm,amsfonts}
\usepackage{mathtools}
\usepackage{geometry}
\usepackage{bm}
\usepackage{enumitem}
\usepackage[normalem]{ulem}
\usepackage{xcolor}

\numberwithin{equation}{section}

\usepackage{citeref}
 
\setlist[enumerate,2]{
  label=(\alph*),
  ref=\theenumi(\alph*)
}

\usepackage[hyphens]{url}
\usepackage[pdftex,breaklinks]{hyperref}

\newtheorem{theorem}{Theorem}[section]
\newtheorem{proposition}[theorem]{Proposition}
\newtheorem{lemma}[theorem]{Lemma}

\newtheorem{corollary}[theorem]{Corollary}

\newtheorem{maintheorem}{Theorem}
\newtheorem{maincorollary}[maintheorem]{Corollary}   

\theoremstyle{definition}

\newtheorem{remark}[theorem]{Remark}

\newcommand{\corr}{\operatorname{Corr}}

\newcounter{AssumpA}

\newenvironment{assump}{\em
  \begin{enumerate}[
    label={\normalfont\textbf{(A\arabic*)}}, 
    ref={\normalfont(A\arabic*)},
    font=\normalfont
  ]
    \setcounter{enumi}{\value{AssumpA}}
}{
    \setcounter{AssumpA}{\value{enumi}}
  \end{enumerate}
}

\newcommand{\diam}{\mathrm{diam}}
\newcommand{\Lip}{\mathrm{Lip}}
\newcommand{\qand}{\quad\text{and}\quad}

\DeclareMathOperator*{\lip}{Lip}

\title[Decay of Correlations for Partially Hyperbolic Skew-Products]{Decay of Correlations for\\ Partially Hyperbolic Skew-Products 
}
\date{}

\thanks{JFA was partially supported by The Royal Society Wolfson Visiting Fellowship RSWVF$\backslash$25$\backslash$R1$\backslash$1006 at the Department of Mathematical Sciences, Loughborough University hosted by WB; and by CMUP, member of LASI, which is financed by national funds through FCT -- Funda\c{c}\~{a}o para a Ci\^encia e a Tecnologia, I.P., under project UID/00144/2025 (DOI: \url{https://doi.org/10.54499/UID/00144/2025}) and PTDC/MAT-PUR/4048/2021.}  

 \author[J. F. Alves]{Jos\'{e} F. Alves}
\address{Jos\'{e} F. Alves\\ Departamento de Matem\'{a}tica\\ Faculdade de Ci\^encias da Universidade do Porto\\ Rua do Campo Alegre 687\\ 4169-007 Porto\\ Portugal}
\email{jfalves@fc.up.pt} \urladdr{http://www.fc.up.pt/cmup/jfalves}

\author[W. Bahsoun]{Wael Bahsoun}
\address{Wael Bahsoun, Department of Mathematical Sciences, Loughborough University, Loughborough, Leicestershire, LE11 3TU, UK}
\email{W.Bahsoun@lboro.ac.uk}
 \urladdr{https://www.lboro.ac.uk/departments/maths/staff/wael-bahsoun}

\keywords{Partial Hyperbolicity, Skew-Product, Physical Measure, Decay of Correlations}
\subjclass[2020]{37A25, 37D25}

\begin{document}

\maketitle

\begin{abstract} We develop a technique based on sectional transfer operators and the regularity of their fixed points to study rates of correlation decay for \emph{partially hyperbolic} skew-products.  To illustrate the range of applicability of this technique, we apply it to a family of Viana like maps with \emph{non-uniformly contracting} fibres, a family of skew-product maps with \emph{non-uniformly expanding} base dynamics and to a generalised family of  \emph{heterochaos} baker maps. The latter partially hyperbolic systems have recently been an  active topic of research due to their connection with the Dyck shift. \end{abstract}

\tableofcontents

\section*{Introduction}

Partially hyperbolic systems are a fundamental class of dynamical systems, combining hyperbolic behaviour with centre dynamics that are neither uniformly contracting nor expanding. Their ability to capture interactions across multiple timescales makes them central to modern dynamical systems theory. Despite their importance, their ergodic theory remains far less developed than that of uniformly hyperbolic systems, with many fundamental questions still open.

A main objective in the study of partially hyperbolic systems is the existence of physical measures and the statistical laws they encode. In \cite{Tsujii00} Tusjii proved existence of finitely many physical measures for partially hyperbolic surface endomorphisms with one-dimensional strongly unstable subbundle. In the setting of mostly expanding centre direction, the pioneering work of Alves, Bonatti and Viana~\cite{ABV00} established the existence of Sinai-Ruelle-Bowen measures through the introduction of hyperbolic times. This approach was subsequently developed into a general framework based on Gibbs--Markov--Young structures \cite{Y98,Y99} and inducing schemes, yielding precise information on rates of mixing and statistical limit laws; see, for instance, Alves and Pinheiro~\cite{AP08}, and Alves and Li~\cite{AL15}. The complementary setting of mostly contracting centre direction was initiated by Bonatti and Viana~\cite{BV00}. Although the existence and finiteness of physical measures are now well understood in this context, considerably less is known regarding quantitative statistical properties such as rates of decay of correlations. Existing results treat only exponentially mixing smooth diffeomorphsims, see Dolgopyat \cite{D00}, or  often rely on additional geometric assumptions or special structures; see, for example, de Castro~\cite{Ca01}.

A particularly important class of partially hyperbolic systems is provided by skew-product maps where the statistical behaviour of the fibre dynamics is strongly influenced by the properties of the base transformation. Skew-products with uniformly contracting fibres have attracted considerable attention. In particular, Araújo, Galatolo and Pacifico~\cite{AGP14} established decay of correlations and logarithm laws for systems with uniformly contracting fibres, while Galatolo~\cite{G18} obtained quantitative statistical stability and convergence to equilibrium for broad classes of partially hyperbolic skew-products. Related developments may also be found in the work of Kloeckner~\cite{K21}.
Another important class of partially hyperbolic skew-products has a uniformly expanding base with fibre dynamics given by the identity plus rotation. Such systems were studied by  Gouëzel~\cite{G09}, Butterley and Eslami~\cite{BE} among others, and recent generalisations by Castorrini and Liverani~\cite{CL22} and Tsujii \cite{T23}. They arise naturally in deterministic fast--slow systems, where the base dynamics acts as a rapidly evolving environment for the fibre dynamics. This point of view has led to a substantial literature on averaging and statistical limit laws; see Dolgopyat~\cite{D04},  De Simoi and Liverani~\cite{DeL18}, Korepanov, Kosloff and Melbourne~\cite{KKM17} and the recent work by De Simoi, Fernando and Fleming-Vázquez \cite{DFF25}. These works illustrate the richness of skew-product structures and highlight the need for techniques capable of treating systems beyond the above settings.

The purpose of the present paper is to develop a general framework to study rates of correlations decay for partially hyperbolic skew-products whose fibre dynamics is \emph{merely contracting on average}. More precisely, we assume that a suitable logarithmic average of the fibre Lipschitz constants is negative. This condition may be viewed as an averaged negativity condition on the fibre Lyapunov exponent and is substantially weaker than uniform contraction. Our approach is based on the introduction of a sectional transfer operator acting on measurable families of probability measures on the fibres. We prove that this operator admits a unique invariant section and that this section is Hölder continuous. The Hölder regularity of the invariant section established here is the key ingredient in transferring statistical properties from the base dynamics to the full skew-product. Thus, the work provides a flexible approach to obtain rates of correlation decay for a wide range of partially hyperbolic skew products without the need for constructing Young-Towers.
The approach also treats polynomially and exponentially mixing systems that fall outside the classical transfer operator framework which require the system to be defined via a stable or a central cone field (see the books \cite{B18, DKL21} and the recent works \cite{BL22, CL22}).

Our first main result establishes the existence and uniqueness of a physical measure whose basin is completely determined by the basin of the absolutely continuous invariant measure of the base transformation. Our second main result shows that if the base transformation exhibits decay of correlations at a rate \(r(n)\), then the skew-product inherits the same asymptotic rate, up to an exponentially small error term. 

To illustrate the applicability of the abstract theory, we consider three families of examples. The first consists of heterochaotic baker maps, introduced in~\cite{STY21,TY25}, which exhibit coexistence of invariant sets with different unstable dimensions and they are related to the Dyck shift \cite{TY25}. Recent work has established, in a conservative setting, exponential and polynomial mixing properties for certain parameter regimes~\cite{T25,TT25}. We show that our methods apply beyond the conservative setting and in parameter regions where the invariant measure is not explicitly known. The second family is a smooth skew-product that may be viewed as a nonuniformly contracting counterpart of the classical Viana maps~\cite{V97}. Whereas Viana maps are characterised by nonuniformly expanding fibres over an expanding base, our examples possess fibres that contract only on average despite the presence of regions exhibiting strong expansion. Finally, we consider skew-products over intermittent Pomeau--Manneville maps~\cite{LSV99,CHM10}. Since these maps exhibit only polynomial decay of correlations, our results yield examples of partially hyperbolic skew-products with polynomial rates of mixing, demonstrating that the theory developed here is not restricted to rapidly mixing base dynamics. The examples considered in this paper illustrate the flexibility of the abstract framework developed here and show that average fibre contraction provides a natural setting in which one can establish existence, uniqueness, and quantitative statistical properties of physical measures for a broad class of partially hyperbolic skew-products.

The paper is organised as follows. In Section~2 we introduce the sectional transfer operator and establish the existence, uniqueness, and Hölder regularity of its invariant section, together with the existence and uniqueness of the associated physical measure. In Section~3 we prove our abstract result on decay of correlations. The remaining sections are devoted to applications: heterochaotic baker maps, smooth nonuniformly contracting skew-products, and skew-products over intermittent maps.
\smallskip
\paragraph{\em Acknowledgements.}
The authors would like to thank Marcelo Viana for valuable communications.

\part{General Theory}\label{part.general}

In this part, we develop the abstract framework underlying the paper. We begin by introducing a sectional transfer operator acting on measurable families of probability measures and establish its fundamental properties under an average fibre contraction assumption. We then prove the existence and uniqueness of an invariant section and show that it inherits Hölder regularity from the base dynamics and the fibre maps, despite the fact that contraction holds only in an averaged logarithmic sense. Next, we use this invariant section to construct a unique physical measure for the skew-product and show that its basin is completely determined by the basin of the invariant measure of the base transformation. Finally, we establish our abstract result on decay of correlations, proving that the statistical properties of the base are transferred to the full skew-product up to an exponentially small error term. Together, these results provide a robust framework for partially hyperbolic skew-products with average fibre contraction, establishing existence and uniqueness of physical measures and quantitative rates of decay of correlations under assumptions inherited from the base dynamics.

\section{Setup and Main Results}\label{se.setup}

Let $T:M\to M$ be a discrete-time dynamical system on a manifold $M$, and let $\mu$ be a $T$-invariant probability measure. The \emph{basin} of $\mu$ is defined by
\[
B(\mu)
=
\left\{
x\in M :
\frac{1}{n}\sum_{j=0}^{n-1}\delta_{T^j(x)}
\stackrel{w^*}{\longrightarrow}
\mu
\text{ as } n\to\infty
\right\}.
\]
Thus, $B(\mu)$ consists of those points whose empirical measures converge to $\mu$ in the weak-$^*$ topology. The measure $\mu$ is called a \emph{physical measure} if
$
 B(\mu)  $
has positive  Lebesgue measure on $M$.

\subsection{Skew-Product Maps} 
Let \( \Omega , X \) be compact Riemannian manifolds (possibly with boundary), and let \(d_\Omega,d_X\) denote the respective Riemannian distances.  We endow $\Omega\times X$ with the product Riemannian structure induced by the Riemannian metrics on $\Omega$ and $X$ and use $d$ to denote the corresponding distance on $\Omega\times X$. Let $\pi: \Omega \times X \to \Omega$ be  the canonical  projection and $\mathrm{diam}(X)$ denote the diameter of $X$.

We consider a skew-product map
\(T : \Omega \times X \to \Omega \times X\), defined by
\begin{equation} \label{eq.skewmap}
T(\omega,x) = \big(f(\omega),\, g(\omega,x)\big),
\end{equation}
for \((\omega,x) \in \Omega \times X\).
We shall refer to  $f:\Omega\to\Omega$ as the \emph{base map}.
For each \(\omega \in \Omega\), we denote by \(g_{\omega} : X \to X\) the \emph{fibre map} given by
\[
g_{\omega}(x) = g(\omega,x),\qquad x\in X.
\]

We now introduce the standing assumptions that will be used throughout the remainder of the paper. These assumptions provide the abstract framework for all subsequent results. We begin by fixing some notation.
Given metric spaces $(Y,d_Y)$ and $(Z,d_Z)$ and a Lipschitz continuous function $\varphi:Y\to Z$, let
\[
\Lip(\varphi)
:=
\sup_{x\neq y}
\frac{d_Z(\varphi(x),\varphi(y))}{d_Y(x,y)}.
\]
Given $q\in(0,1)$ and a $q$-Hölder continuous function $\varphi:Y\to Z$ define \[
[\varphi]_q
:=
\sup_{x\neq y}
\frac{d_Z(\varphi(x),\varphi(y))}{d_Y(x,y)^q}
\]
Our first two assumptions impose  uniform Lipschitz bounds on the fibre maps and some regularity of the base dynamics.

\begin{assump}
\item \label{as.fibre} There exists $L>0$ such that, for every $\omega\in\Omega$, the map
$g_\omega:X\to X$ is Lipschitz continuous with
$$\frac1L \le  \lip(g_\omega) \le L.$$
\end{assump} 

\begin{assump}
\item \label{as.base}
The following hold for the base map:
\begin{enumerate}
\em
\item \em \label{as.base1}
there exist pairwise disjoint open sets
$
\Omega_1,\ldots,\Omega_N\subset \Omega
$
  such that
$
m\!\left(
\Omega\setminus \cup_{i=1}^{N}\Omega_i
\right)=0.
$
Moreover, for every \(i=1,\ldots,N\), the restriction
$
f\vert_{ {\Omega}_i}$ has an extension to the closure~$\overline{\Omega}_i$ which is a $C^1$ bijection onto $ \Omega
$. Denoting by
$
\theta_i:\Omega\to\overline{\Omega}_i
$
the corresponding inverse branch, we assume that 
$$\lip(\theta_i)\le 1.$$
\em
  \item \em  \label{as.base2}
  \(f\) has an invariant probability measure \(\eta\) that is equivalent  to~\(m\), with density \(\rho \). 
\end{enumerate}
\end{assump} 

Set for each $\omega\in\Omega$ and $i=1,\dots,N$
 \begin{equation}\label{eq.weight}
p_i( \omega)=\frac{\rho(\theta_i(\omega))}{|\det D f(\theta_i(\omega))| \, \rho(\omega)} .
\end{equation}

Our next assumption is a mild   contraction on average condition for the fibre maps.

\begin{assump}
    \item\label{as.mean}  
   There exists $\lambda < 0$ such that for $\eta$-almost every $\omega\in \Omega$
    \begin{equation*}
    \sum_{i=1}^N p_i(\omega)\log \left(\Lip(g_{\theta_i(\omega)})\right) \le \lambda.
    \end{equation*}
\end{assump}

The assumptions above already suffice to establish the existence of a fixed point for the sectional transfer operator that we introduce below. To obtain the regularity of this fixed point, we impose two further conditions. 
Given $x\in X$ and $1\le i\le N$, define $g_{x,i}:\Omega\to X$ by
$$g_{x,i}(\omega)=g_{\theta_i(\omega)}(x)= g(\theta_i(\omega),x),\qquad \omega\in\Omega.
$$

\begin{assump}
\item \label{as.holder}
There exists \(q,L>0\) such that for all \(i=1,\dots,N\),
\begin{enumerate}
\em
\item\label{as.holder1} $[p_i]_q\le L$;
 
\item\label{as.holder2}
$
  \lip(g_{x,i})\le L
$, for all $x\in X$.
\end{enumerate}
\end{assump}
Note that if \ref{as.holder1} holds for some value of $q$, then it also holds for every smaller value of $q$.

\subsection{Sectional Transfer Operator} The framework above enables the introduction of an operator that provides a natural setting for studying the evolution of the \emph{sample measures} under the skew-product dynamics. We begin by defining the space on which this operator acts.

Let $\mathcal M_1(X)$ denote the set of Borel probability measures on~$X$. This set endowed with the  Wasserstein-1 distance $W_1$, the space $\mathcal M_1(X)$ is a complete metric space. Since $X$ is assumed to be compact, $\mathcal M_1(X)$ is also bounded with respect to~$W_1$; see Subsection~\ref{eq.wasserstein} for details about   Wasserstein metrics.
We define the space of probability-valued measurable sections over $\Omega$ by
\[
\mathbf P
=
\Bigl\{
\boldsymbol{\nu}:\Omega\to\mathcal M_1(X)
\;\Big|\;
\boldsymbol{\nu}\ \text{is measurable}
\Bigr\},
\]
where sections that agree $\eta$-almost everywhere are identified. For
$\boldsymbol{\nu}\in\mathbf P$, we write $\nu_\omega=\boldsymbol{\nu}(\omega)$ and denote the corresponding family of measures by
$\{\nu_\omega\}_{\omega\in\Omega}$.
We equip $\mathbf P$ with the metric
\begin{equation}\label{eq.dp}
d_{\mathbf P}(\boldsymbol{\mu},\boldsymbol{\nu})
=
\operatorname*{ess\,sup}_{\omega\in\Omega}
W_1(\mu_\omega,\nu_\omega).
\end{equation}
Since $(\mathcal{M}_1(X), W_1)$ is bounded, this metric is well defined, and 
$\mathbf{P}$ is in fact a complete metric space.
For $\boldsymbol{\nu}\in\mathbf P$, the \emph{sectional transfer operator}
$\mathcal K:\mathbf P\to\mathbf P$ is defined, for $\eta$-almost every
$\omega\in\Omega$, by
\begin{equation}\label{eq.kalpha}
(\mathcal K\boldsymbol{\nu})_\omega
=
\sum_{i=1}^{N}
p_i(\omega)\,
\bigl(g_{\theta_i(\omega)}\bigr)_*
\nu_{\theta_i(\omega)}.
\end{equation}
Using that $\rho$ is a fixed point of the transfer operator associated with the base dynamics $f$, we readily deduce that
\begin{equation}\label{eq:sumpi}
\sum_{i=1}^N p_i(\omega)=1
\end{equation}
for $\eta$-almost every $\omega\in\Omega$. Therefore,
$
(\mathcal K\boldsymbol{\nu})_\omega \in \mathcal M_1(X)
$
for $\eta$-almost every $\omega\in\Omega$.
By the measurability of all functions involved in its definition, we have that $\mathcal K$ defines a map from~$\mathbf P$ into itself. Moreover, a fixed point of~$\mathcal K$ in $\mathbf P$ provides the sample measures of a $T$-invariant measure; see \cite[Corollary 2.5]{AB26}. Conversely, we show in Lemma~\ref{le.fixedeq} that the sample measures of any $T$-invariant measure whose marginal on $\Omega$ is~$\eta$ is necessarily a fixed point of $\mathcal K$ in the set $\mathbf P$.

\subsection{Main Results}
Our first main result establishes existence, uniqueness, and Hölder regularity of the fixed point of \(\mathcal K\). This regularity property will subsequently play a key role in the proof of decay of correlations for the skew-product map.

\begin{maintheorem}\label{th.mainA}
Assume that    \ref{as.fibre}--\ref{as.holder} hold. Then 
\begin{enumerate}
 \item  $\mathcal K $  admits a unique fixed point $\bm\nu  \in\mathbf P$;
  \item  the section
$
\bm\nu=\{\nu_\omega\}_{\omega\in\Omega}
$
is the disintegration of a $T$-invariant probability measure $\mu$   with marginal $\eta$ on $X$ and
 $B(\mu)=B(\eta)\times X;$
\item   there exists $q\in(0,1)$ such that  $\bm\nu :\Omega\to \mathcal M_1(X)$ is $q$-H\"older continuous.
\end{enumerate}

\end{maintheorem}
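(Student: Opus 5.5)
The plan is to replace the Wasserstein-1 distance by a snowflaked version in which the averaged logarithmic contraction \ref{as.mean} becomes a genuine contraction for $\mathcal K$. For $s\in(0,1]$ let $d_X^s$ be the metric $d_X(x,y)^s$ on $X$, let $W^{(s)}$ be the associated Wasserstein-1 distance on $\mathcal M_1(X)$, and let
\[
d_{\mathbf P}^{(s)}(\boldsymbol\mu,\boldsymbol\nu)=\operatorname*{ess\,sup}_{\omega} W^{(s)}(\mu_\omega,\nu_\omega).
\]
Since $X$ is compact, $W^{(s)}$ and $W_1$ induce the same topology, and $W_1\le \diam(X)^{1-s}W^{(s)}$. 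Hence $(\mathbf P,d^{(s)}_{\mathbf P})$ is complete and has the same fixed points.

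\emph{Step 1: contraction.} A map that is $\ell$-Lipschitz for $d_X$ is $\ell^s$-Lipschitz for $d_X^s$, and $W^{(s)}$ is jointly convex. Using \eqref{eq:sumpi}, this gives
\[
W^{(s)}\bigl((\mathcal K\boldsymbol\mu)_\omega,(\mathcal K\boldsymbol\nu)_\omega\bigr)\le \sum_i p_i(\omega)\Lip(g_{\theta_i\omega})^s\, d^{(s)}_{\mathbf P}(\boldsymbol\mu,\boldsymbol\nu).
\]
Write $\ell_i=\log\Lip(g_{\theta_i\omega})$, so $|\ell_i|\le\log L$ by \ref{as.fibre}. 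Then
\[
e^{s\ell_i}\le 1+s\ell_i+\tfrac{s^2}{2}(\log L)^2L^s .
\]
By \ref{as.mean} and \eqref{eq:sumpi},
\[
\sum_i p_i e^{s\ell_i}\le 1+s\lambda+Cs^2=:\kappa<1
\]
for $s$ small, uniformly in $\omega$. So $\mathcal K$ is a $\kappa$-contraction for $d^{(s)}_{\mathbf P}$, and Banach's fixed point theorem gives item (1).

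\emph{Step 2: invariant measure and basin.} By \cite[Corollary 2.5]{AB26}, the fixed point $\boldsymbol\nu$ is the disintegration of a $T$-invariant $\mu$ with marginal $\eta$. For the basin, the inclusion $B(\mu)\subset B(\eta)\times X$ follows by projecting empirical measures under $\pi$. For the converse, take $\omega\in B(\eta)$ and any $x\in X$, and let $\hat\mu$ be any weak$^*$ accumulation point of $\frac1n\sum_{j<n}\delta_{T^j(\omega,x)}$. Then $\hat\mu$ has marginal $\eta$, since $\pi$ is continuous and $\omega\in B(\eta)$. Moreover $\hat\mu$ is $T$-invariant; this is standard if $T$ is continuous, and otherwise needs the discontinuity set to be $\eta\times$-negligible, which I would check via the structure in \ref{as.base}. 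By Lemma~\ref{le.fixedeq}, the disintegration of $\hat\mu$ is a fixed point of $\mathcal K$, so by uniqueness $\hat\mu=\mu$. Hence every accumulation point equals $\mu$, which gives $(\omega,x)\in B(\mu)$. Note that no ergodicity is needed here.

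\emph{Step 3: Hölder regularity.} Let $q'=\min(q,s)$ and consider
\[
\mathbf P_C=\{\boldsymbol\nu:\ W^{(s)}(\nu_\omega,\nu_{\omega'})\le C\,d_\Omega(\omega,\omega')^{q'} \text{ for a.e. }\omega,\omega'\}.
\]
This set is closed in $d^{(s)}_{\mathbf P}$ and nonempty (it contains constant sections), so it suffices to show $\mathcal K\mathbf P_C\subset\mathbf P_C$ for $C$ large. I would split
\[
(\mathcal K\boldsymbol\nu)_\omega-(\mathcal K\boldsymbol\nu)_{\omega'}
\]
into two parts. The first part carries the weight differences $p_i(\omega)-p_i(\omega')$; it is controlled by \ref{as.holder1} and by bounding $W^{(s)}$ against total variation times $\diam(X)^s$, giving $NL\diam(X)^s d^q$. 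The second part is $\sum_ip_i(\omega')$ times $W^{(s)}$ between $g_{\theta_i\omega}{}_*\nu_{\theta_i\omega}$ and $g_{\theta_i\omega'}{}_*\nu_{\theta_i\omega'}$. I would bound this by the triangle inequality in two pieces:
\begin{itemize}
\item changing the measure contributes $\Lip(g_{\theta_i\omega})^sC\,d(\theta_i\omega,\theta_i\omega')^{q'}$, which is at most $\Lip(g_{\theta_i\omega})^s C\,d^{q'}$ since $\Lip(\theta_i)\le1$;
\item changing the map contributes $\sup_x d_X(g_{x,i}(\omega),g_{x,i}(\omega'))^s\le L^sd^s$ by \ref{as.holder2}.
\end{itemize}
Summing with weights, the linear-in-$C$ term is $\sum_ip_i(\omega')\Lip(g_{\theta_i\omega})^sC\,d^{q'}$. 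The main obstacle is the mismatch between the weights evaluated at $\omega'$ and the Lipschitz constants along $\theta_i\omega$. I would handle it by choosing the split so that both are evaluated at the same point, i.e.\ pairing $p_i(\omega)$ with $g_{\theta_i\omega}$ and moving the weight difference into the first part. Then Step 1 gives a factor $\kappa C d^{q'}$, and overall
\[
W^{(s)}\bigl((\mathcal K\boldsymbol\nu)_\omega,(\mathcal K\boldsymbol\nu)_{\omega'}\bigr)\le (A+\kappa C)\,d^{q'},
\]
with $A$ independent of $C$ (using boundedness of $\Omega$ to compare powers). Choosing $C\ge A/(1-\kappa)$ makes $\mathbf P_C$ invariant, so the fixed point lies in $\mathbf P_C$. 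Finally, $W_1\le\diam(X)^{1-s}W^{(s)}$ yields $q'$-Hölder continuity of $\boldsymbol\nu$ for $W_1$, after modification on a null set.
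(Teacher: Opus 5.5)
Your proposal is correct and follows essentially the paper's route: your $W^{(s)}$ is exactly the paper's $\widetilde W_q$, and your Step~1, basin argument, and Hölder estimate mirror Proposition~\ref{pr.Kcontraction}, the basin proposition, and Proposition~\ref{le.qholder}. In the Hölder step, your triangle-inequality split with a total-variation bound does the job of the $\min\{p_i(\omega),p_i(\omega')\}$ coupling of Lemma~\ref{le.wassum}.

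The one genuine simplification is your observation that $\widetilde W_q$ is a true metric, namely the Kantorovich distance for the metric $d_X^q$ (the paper's remark to the contrary is mistaken). Banach's theorem therefore applies directly, and the detour through $d_{\mathbf P}(\mathcal K^n\bm\mu,\mathcal K^n\bm\nu)\le C\tau^n d_{\mathbf P}(\bm\mu,\bm\nu)^q$ and Lemma~\ref{le.fix} is unnecessary. One small correction: showing that the discontinuity set of $T$ is negligible also needs \ref{as.fibre} and \ref{as.holder2}, not just \ref{as.base}. These give joint continuity of $g$ on each $\Omega_i\times X$.
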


The relation between the basins of $\mu$ and $\eta$ in item (2) above  yield the following.
 
 \begin{maincorollary}\label{co.mainA}
 Under the conditions of Theorem~\ref{th.mainA},
\begin{enumerate}
\item  
$\mu$ is a physical measure for $T$ 
$\iff$ $\eta$ is a physical measure for $f$;
\item
 $T$ has a unique  physical measure 
$\iff$   $f$ has a unique  physical measure.
\end{enumerate}
\end{maincorollary}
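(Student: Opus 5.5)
The corollary is a direct consequence of the basin identity $B(\mu)=B(\eta)\times X$ in Theorem~\ref{th.mainA}(2). The plan is to combine it with Fubini's theorem, and for item (2) with an auxiliary argument that every physical measure of $T$ projects onto a physical measure of $f$. Write $m$ also for Lebesgue measure on $\Omega\times X$, which is (equivalent to) the product of the Riemannian volumes $m_\Omega\times m_X$. Since $B(\mu)$ is a product set, Fubini gives
\[
m\bigl(B(\mu)\bigr)=m_\Omega\bigl(B(\eta)\bigr)\,m_X(X),
\]
and $m_X(X)>0$. Hence $m(B(\mu))>0$ if and only if $m_\Omega(B(\eta))>0$, which is item (1). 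Measurability of basins is standard, since $B(\cdot)$ is defined by countably many weak$^*$ limit conditions against a countable dense set of continuous functions.

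For item (2), first assume $f$ has a unique physical measure. By item (1) and the uniqueness of $\eta$ among absolutely continuous invariant measures (the density is fixed), this forces $\eta$ to be that physical measure, so $\mu$ is physical for $T$. Now let $\tilde\mu$ be any physical measure of $T$. If $(\omega,x)\in B(\tilde\mu)$, then pushing empirical measures forward by the continuous map $\pi$ shows that $\omega$ lies in the basin of $\pi_*\tilde\mu$ for $f$. By Fubini, $m(B(\tilde\mu))>0$ implies that the projection $\pi(B(\tilde\mu))$, which is contained in $B(\pi_*\tilde\mu)$, has positive $m_\Omega$-measure. So $\pi_*\tilde\mu$ is a physical measure for $f$, hence $\pi_*\tilde\mu=\eta$ by uniqueness.

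It remains to conclude $\tilde\mu=\mu$. If $\eta$ is the unique physical measure, then $\tilde\mu$ has marginal $\eta$, and Lemma~\ref{le.fixedeq} shows its disintegration is a fixed point of $\mathcal K$. Uniqueness in Theorem~\ref{th.mainA}(1) then gives $\tilde\mu=\mu$. A more direct alternative avoids the lemma: basins of distinct invariant measures are disjoint, and $B(\mu)=B(\eta)\times X$ already contains every point whose base coordinate lies in $B(\eta)$. Any point of $B(\tilde\mu)$ has base coordinate in $B(\pi_*\tilde\mu)=B(\eta)$, so it lies in $B(\mu)$, forcing $\tilde\mu=\mu$. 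I would use this basin-disjointness argument, since it is cleaner.

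For the converse, assume $T$ has a unique physical measure $\tilde\mu$. Any physical measure $\eta'$ of $f$ must be shown to equal $\eta$. The obstacle is that item (1) only relates $\eta$ to $\mu$. My plan is to use the projection argument above: $\pi_*\tilde\mu$ is physical for $f$. Disjointness of basins then rules out a second physical measure $\eta'\neq\eta$, provided each physical measure of $f$ lifts to a physical measure of $T$. That lifting holds for $\eta$ via $\mu$. For a general $\eta'$, I would argue that $\eta'$ is absolutely continuous, or else directly that $\eta$ must be physical. I would first try showing $\eta$ is physical: ergodicity of $\eta$ (implicit in the uniqueness of the fixed point) together with $\eta\sim m$ gives $m(B(\eta))=m(\Omega)>0$. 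Then $B(\eta)$ has full measure, so any other physical measure $\eta'$ would have a basin of positive measure disjoint from it, which is impossible. Hence $f$ always has the unique physical measure $\eta$, and both sides of (2) hold simultaneously. Establishing that ergodicity of $\eta$ is the step I expect to need most care.
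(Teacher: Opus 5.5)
Your item (1) is correct and is exactly the paper's route: the paper presents the corollary as an immediate consequence of $B(\mu)=B(\eta)\times X$, and Fubini finishes it. Your projection step is also fine: $B(\tilde\mu)\subseteq B(\pi_*\tilde\mu)\times X$, so $\pi_*\tilde\mu$ is physical for $f$ whenever $\tilde\mu$ is physical for $T$. The disjointness of basins is fine too.

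The gap is in item (2). Both directions rest on identifying every physical measure of $f$ with $\eta$, and neither of your two justifications is available. Uniqueness of the absolutely continuous invariant measure is not among (A1)--(A4). Ergodicity of $\eta$, the step you flagged, is \emph{not} implied by uniqueness of the fixed point of $\mathcal K$; it is false, not merely delicate. Take $f=\mathrm{id}$ on $\Omega$ (so $N=1$, $\theta_1=\mathrm{id}$, $p_1\equiv 1$), $X=[0,1]$ and $g_\omega(x)=x/2$. All of (A1)--(A4) hold and $\mathcal K$ has the unique fixed point $\nu_\omega=\delta_0$. Yet every absolutely continuous probability is $f$-invariant, $\eta$ is not ergodic, $B(\eta)=\emptyset$, and $f$ has no physical measure at all. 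So your concluding claim that $f$ always has $\eta$ as its unique physical measure is false. In the direction ``$f$ has a unique physical measure $\Rightarrow$ $T$ does'', nothing in your argument excludes that this measure is some $\eta'\neq\eta$. In that case $\mu$ is not physical and your argument never starts.

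What is missing is a lifting statement for an \emph{arbitrary} physical measure of $f$. The basin identity for $\eta$ alone does not supply it, but the same ingredients do:
\begin{enumerate}
\item[(a)] If $\eta'$ is physical for $f$, set $A=B(\eta')$. Then $f^{-1}(A)=A$, and $\eta(A)>0$ because $\eta\sim m$. Integrate the ergodic averages of a continuous $\varphi$ over $A$ against the invariant measure $\eta_A=\eta(\cdot\cap A)/\eta(A)$; bounded convergence gives $\eta'=\eta_A$.
\item[(b)] The measure $\mu_A=\mu|_{A\times X}/\eta(A)$ is the only $T$-invariant measure with marginal $\eta_A$. If $\tilde\mu$ is another, then $\eta(A)\tilde\mu+\mu|_{A^c\times X}$ is $T$-invariant (since $A^c\times X$ is $T$-invariant) and has marginal $\eta$, hence equals $\mu$. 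Restricting to $A\times X$ gives $\tilde\mu=\mu_A$.
\item[(c)] With this uniqueness, the subsequence argument of the paper's basin proof (Lemma~\ref{le.zz}, using $\eta_A(D_f)=0$) gives $B(\eta_A)\times X\subseteq B(\mu_A)$. Hence $\mu_A$ is physical.
\end{enumerate}
Combined with your projection and disjointness arguments, $\tilde\mu\mapsto\pi_*\tilde\mu$ is then a bijection from the physical measures of $T$ onto those of $f$. This proves both directions of (2) with no ergodicity hypothesis. Your shortcut via $\eta$ itself goes through only when $\eta$ is known to be ergodic, e.g.\ when (A5) holds with $r(n)\to0$, as in all the applications.
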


To state our next main result on decay of correlations for the skew-product map, we first introduce the regularity classes of observables and the corresponding norms that will be used.
For a compact metric space $(Y,d_Y)$ and $q\in(0,1)$, let
$$
\lip(Y)
=
\left\{
\varphi:Y\to\mathbb R
:
\lip(\varphi)<\infty
\right\}
$$
and
\[
C^q(Y)
=
\left\{
\varphi:Y\to\mathbb R
:
[\varphi]_q<\infty
\right\}.
\]
We equip $\lip(Y)$ with the norm
\[
\|\varphi\|_{\lip}
:=
\|\varphi\|_\infty+\Lip(\varphi).
\]
and    $C^q(Y)$ with the norm
\[
\|\varphi\|_q
:=
\|\varphi\|_\infty+[\varphi]_q.
\]
For observables $\phi,\psi:\Omega\to\mathbb R$, we define the \emph{correlation function} with respect to the $f$-invariant probability measure $\eta$ by
\[
\corr_\eta(\phi,\psi)
:=
\left|
\int \phi\,\psi\,d\eta
-
\int \phi\,d\eta
\int \psi\,d\eta
\right|.
\]
We assume that the base dynamics exhibits decay of correlations at a rate controlled by a function~$r$.

\begin{assump}
\item \label{as.dc}
There exists a nonincreasing function
$
r:[0,\infty)\to[0,\infty)
$
such that for every $q>0$, every bounded measurable
$\psi:\Omega\to\mathbb R$
and every
$\phi\in C^q(\Omega)$,
\[
\corr_\eta(\phi,\psi\circ f^n)
\le
r(n)\,
\|\psi\|_\infty\,
\|\phi\|_{q}.
\]
\end{assump}

\begin{maintheorem}\label{th.DC}
Assume that \ref{as.fibre}--\ref{as.dc} hold and  $\mu$ is the probability measure given by Theorem~\ref{th.mainA}.
Given $\kappa\in(0,1)$, there exist $C>0$ and $s\in(0,1)$ such that for all
$\Phi,\Psi\in\Lip(\Omega\times X)$ and all $n\ge1$,
\[
\left|
\int \Phi\cdot\Psi\circ T^n\,d\mu
-
\int \Phi\,d\mu
\int \Psi\,d\mu
\right|
\le
C\bigl(\|\Phi\|_{\lip}+\|\Psi\|_{\lip}\bigr)
\bigl[r(\kappa n)+s^n\bigr].
\]
\end{maintheorem}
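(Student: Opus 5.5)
The plan is to split the time $n$ into two parts, $n = k + (n-k)$ with $k \approx (1-\kappa) n$. The last $k$ iterates are used to push $\Psi$ down to a function on the base by averaging over the fibres. The remaining $n-k \approx \kappa n$ iterates are then handled by the decay of correlations of $f$ in \ref{as.dc}.

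\textbf{Step 1: reduction to the base.} Write $\mu = \int \nu_\omega\, d\eta(\omega)$, using Theorem~\ref{th.mainA}. Define
\[
\bar\Phi(\omega) := \int \Phi(\omega,x)\, d\nu_\omega(x).
\]
By Theorem~\ref{th.mainA}(3) and the Kantorovich duality for $W_1$, $\bar\Phi$ is $q$-H\"older with $\|\bar\Phi\|_q \le C\|\Phi\|_{\lip}$. Next set
\[
\Psi_k(\omega,x) := \Psi\circ T^k(\omega,x) \qquad\text{and}\qquad \bar\Psi_k(\omega) := \int \Psi_k(\omega,x)\, d\nu_\omega(x).
\]

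\textbf{Step 2: fibrewise contraction.} The key estimate is that, for $\eta$-a.e.\ $\omega$ and all $x,y\in X$,
\[
|\Psi_k(\omega,x)-\Psi_k(\omega,y)| \le \Lip(\Psi)\,\Lip(g^k_\omega)\,\diam(X).
\]
Here $g^k_\omega$ is the fibre cocycle along the forward orbit. By \ref{as.fibre}, $\Lip(g_\omega)\le L$, so $\Lip(g^k_\omega)\le \prod_{j<k}\Lip(g_{f^j\omega})$. Assumption \ref{as.mean} only controls averages over preimages, weighted by $p_i$. For the backward (transfer-operator) chain these weights are exactly the conditional probabilities of $\eta$ given $f^{-1}\mathcal B$. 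The increments $\log\Lip(g_{f^j\omega}) - \mathbb E[\,\cdot\,|\,f^{-(j+1)}\mathcal B\,]$ are therefore reverse martingale differences, bounded by $\log L$. Azuma--Hoeffding then gives
\[
\eta\Bigl\{\prod_{j<k}\Lip(g_{f^j\omega}) > e^{\lambda k/2}\Bigr\} \le e^{-ck}.
\]
Consequently $|\Psi_k(\omega,x)-\bar\Psi_k(\omega)| \le \|\Psi\|_{\lip}\, e^{\lambda k/2}\diam(X)$ outside a set of measure $e^{-ck}$, and it is at most $2\|\Psi\|_\infty$ on that set. This costs an error of order $\|\Psi\|_{\lip}s_0^k$ when integrating against the bounded $\Phi$.

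\textbf{Step 3: invariance and correlation identity.} Since $\mu$ has sample measures $\nu$ fixed by $\mathcal K$, we can write
\[
\int \Phi\cdot\Psi\circ T^n\,d\mu = \int \Phi\cdot \Psi_k\circ T^{n-k}\, d\mu.
\]
We then replace $\Psi_k$ by $\bar\Psi_k\circ\pi$. Because $\mu$ is $T^{n-k}$-invariant, the preimage of the bad set under $T^{n-k}$ has the same $\mu$-measure. Hence the error is $\le C\|\Phi\|_\infty\|\Psi\|_{\lip}s_0^k$. What remains is
\[
\int \Phi\cdot \bar\Psi_k\circ f^{n-k}\circ\pi\, d\mu = \int \bar\Phi\cdot \bar\Psi_k\circ f^{n-k}\,d\eta.
\]
Also $\int \bar\Psi_k\,d\eta = \int\Psi\,d\mu$ and $\int\bar\Phi\,d\eta=\int\Phi\,d\mu$. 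Applying \ref{as.dc} with $\phi=\bar\Phi\in C^q$ and the bounded measurable $\psi=\bar\Psi_k$ (with $\|\bar\Psi_k\|_\infty\le\|\Psi\|_\infty$) bounds the difference by $r(n-k)\|\Psi\|_\infty C\|\Phi\|_{\lip}$.

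\textbf{Step 4: choice of $k$.} Choose $k=\lfloor(1-\kappa)n\rfloor$, so that $n-k\ge\kappa n$. Since $r$ is nonincreasing, $r(n-k)\le r(\kappa n)$. The exponential term becomes $s_0^{(1-\kappa)n}\le C s^n$. Finally use $\|\Phi\|_{\lip}\|\Psi\|_{\lip}\lesssim$, after normalization, the sum stated in the theorem; equivalently, the bound is bilinear, so we rescale accordingly.

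\textbf{Main obstacle.} The hard part is Step 2: turning the averaged condition \ref{as.mean}, which is stated with backward weights $p_i$, into exponentially small $\eta$-measure of forward orbits with weak fibre contraction. First I would verify the reverse-martingale identification $\mathbb E_\eta[h\circ\theta\text{-branch}\mid f^{-1}\mathcal B](\omega)=\sum_i p_i(f\omega)h(\theta_i f\omega)$, since $p_i$ is precisely the Jacobian-type conditional weight. Azuma's inequality then applies with bounded increments. If that identification fails, the fallback is to iterate $\mathcal K$ directly on Dirac sections: this gives $W_1$-contraction on average through Jensen, which again uses exponential moments of the cocycle.
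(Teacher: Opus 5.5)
Your argument is correct, and it takes a genuinely different route from the paper's.

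\textbf{Comparison of the two routes.} The paper works on the measure side. It pushes the density forward, $\mu^k_\Phi=T^k_*(\Phi\mu)$, and splits the correlation into two terms:
\begin{itemize}
\item a base term, bounded by $\|\Psi\|_\infty\|f^k_*\eta_\Phi-\eta\|_{\mathrm{TV}}\le\|\Psi\|_\infty\, r(k)\|h_\Phi-1\|_q$, where $h_\Phi$ is your $\bar\Phi$;
\item a fibre term, bounded via duality by $\Lip(\Psi)\, d_{\mathbf P}(\mathcal K^{n-k}\bm\nu^k_\Phi,\mathcal K^{n-k}\bm\nu)\le C\tau^{n-k}\Lip(\Psi)$, using Proposition~\ref{pr.Kcontraction}.
\end{itemize}
Because \ref{as.mean} averages over preimages, exactly as $\mathcal K$ does, the paper's fibre estimate is deterministic and uniform in $\omega$.

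You work on the observable side instead. You pull $\Psi$ back along forward orbits, where fibre contraction holds only with high probability, so you need a large-deviation step. That step is sound. Your identification $\mathbb E_\eta[h\mid f^{-1}\mathcal B]=(Ph)\circ f$, with $Ph=\sum_i p_i\, h\circ\theta_i$, is right. Writing $\ell(\omega)=\log\Lip(g_\omega)$, it gives $\mathbb E_\eta[\ell\circ f^j\mid f^{-(j+1)}\mathcal B]=(P\ell)\circ f^{j+1}\le\lambda$, so Azuma applies (the increments are bounded by $2\log L$, not $\log L$, which is harmless).

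What your route buys:
\begin{itemize}
\item From Part~I it needs only the H\"older invariant section of Theorem~\ref{th.mainA}.
\item It never disintegrates the non-invariant measures $T^k_*(\Phi\mu)$.
\item It needs no normalisation $\Phi\ge0$, $\int\Phi\, d\mu=1$.
\item It produces the bilinear bound directly.
\end{itemize}
The paper's route reuses the $\mathcal K$-contraction it has already proved and avoids probabilistic tools.

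\textbf{A simplification.} You can drop Azuma altogether. Iterating Lemma~\ref{le.meanq} with the duality $\int u\cdot v\circ f\, d\eta=\int (Pu)\, v\, d\eta$ gives $\int\Lip(g^k_\omega)^q\, d\eta(\omega)\le\tau^k$. Since $\min\{2\|\Psi\|_\infty,\Lip(\Psi)\diam(X)\, t\}\le C\|\Psi\|_{\lip}t^q$, you then get $\int|\Psi_k-\bar\Psi_k\circ\pi|\, d\mu\le C\|\Psi\|_{\lip}\tau^k$. This is essentially your fallback, and it reproduces the paper's exponential rate.

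\textbf{Caveat on Step 4.} The last sentence of Step 4 does not do what it claims. A bilinear bound $C\|\Phi\|_{\lip}\|\Psi\|_{\lip}[r(\kappa n)+s^n]$ cannot be rescaled into $C(\|\Phi\|_{\lip}+\|\Psi\|_{\lip})[r(\kappa n)+s^n]$. Applying the sum form to $(t\Phi,t\Psi)$ and letting $t\to\infty$ would force every correlation to vanish. The statement should therefore be read with the product of the norms. That is what your argument proves, and it is also what the paper's argument gives once its normalisation of $\Phi$ is undone.
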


\section{Invariant Section}

The main goal of this section is to prove Theorem~\ref{th.mainA}. We shall also obtain some useful results for the proof of Theorem~\ref{th.DC} in Section~\ref{se.DC}.

\subsection{Wasserstein Metrics}\label{eq.wasserstein}

A \emph{coupling} between two Borel probability measures \(\mu \) and \(\nu \) on \(X\) is a Borel probability measure \(\gamma \) on \(X\times X\) whose marginals are \(\mu \) and \(\nu \). That is,
\[
\gamma (A\times X)=\mu (A),
\qquad
\gamma (X\times B)=\nu (B)
\]
for every Borel sets \(A,B\subset X\).
Intuitively, \(\gamma \) describes how to transport mass from \(\mu \) to \(\nu \): if \((x,y)\) is sampled according to \(\gamma \), then \(x\) has law \(\mu \) and \(y\) has law \(\nu \).
The \emph{Wasserstein-$q$   distance} is defined for $q\ge1$  by minimizing the average transport cost over all such couplings:
\begin{equation}\label{eq.wq}
W_q(\mu ,\nu )
=
\left(
\inf_{\gamma\in\Gamma(\mu,\nu )}
\int_{X\times X} d_X(x,y)^q\,d\gamma(x,y)
\right)^{1/q},
\end{equation}
where \(\Gamma(\mu,\nu)\) denotes the set of all couplings of \(\mu \) and \(\nu \). 
An \emph{optimal coupling} is simply a coupling \(\gamma \in\Gamma(\mu,\nu )\) which realizes the infimum in~\eqref{eq.wq}.
Such an optimal coupling always exists whenever~\(X\) is a compact space; see e.g. \cite[Theorem 2.3.2]{FG21}. 

Although the expression in~\eqref{eq.wq} does not define a distance for $q\in (0,1)$, the expression in~\eqref{eq.wq}   is still well-defined in this range, and this is precisely the regime in which it will be useful for us to consider \(W_q\).
For notational simplicity, we will consider for $q\in (0,1)$
\begin{equation}\label{eq.wqtilde}
\widetilde{W}_q(\mu ,\nu )
= W_q(\mu ,\nu )^q =
\inf_{\gamma\in\Gamma(\mu,\nu )}
\int_{X\times X} d_X(x,y)^q\,d\gamma(x,y).
\end{equation}

\begin{lemma}\label{le.w1wq}
 For any $q\in (0,1)$ and any probability measures $\mu, \nu \in \mathcal{M}_1(X)$,  we have
 \begin{equation*}
\widetilde W_q(\mu, \nu) \le W_1(\mu, \nu)^q
\qquad\text{and}\qquad
W_1(\mu, \nu) \le \diam(X)^{1-q} \widetilde{W}_q(\mu, \nu).
\end{equation*}
\end{lemma}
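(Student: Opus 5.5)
The two inequalities are pointwise comparisons between the cost functions $d_X^q$ and $d_X$, integrated against a suitably chosen coupling. Both couplings exist because $X$ is compact: by \cite[Theorem 2.3.2]{FG21} there is an optimal coupling for $W_1$ and one for $\widetilde W_q$. The lower semicontinuity of $d_X^q$ ensures the latter exists as well, and in any case one could work with near-optimal couplings and let the error go to zero.

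For the first inequality, take $\gamma\in\Gamma(\mu,\nu)$ optimal for $W_1$, so that $\int d_X\,d\gamma=W_1(\mu,\nu)$. Since $t\mapsto t^q$ is concave on $[0,\infty)$ for $q\in(0,1)$, Jensen's inequality with respect to the probability measure $\gamma$ gives
\[
\widetilde W_q(\mu,\nu)\le \int_{X\times X} d_X(x,y)^q\,d\gamma(x,y)\le \Bigl(\int_{X\times X} d_X(x,y)\,d\gamma(x,y)\Bigr)^q = W_1(\mu,\nu)^q .
\]
The first step uses only that $\widetilde W_q$ is an infimum over couplings.

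For the second inequality, we use that $d_X(x,y)\le\diam(X)$ for all $x,y\in X$. Hence
\[
d_X(x,y)=d_X(x,y)^{1-q}\,d_X(x,y)^q\le \diam(X)^{1-q}\,d_X(x,y)^q
\]
pointwise on $X\times X$, including at $d_X(x,y)=0$. Take $\gamma$ optimal (or $\varepsilon$-optimal) for $\widetilde W_q$ and integrate this bound:
\[
W_1(\mu,\nu)\le\int d_X\,d\gamma\le \diam(X)^{1-q}\int d_X^q\,d\gamma=\diam(X)^{1-q}\,\widetilde W_q(\mu,\nu),
\]
after letting $\varepsilon\to0$ if needed. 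Neither step presents a real obstacle. The only points needing care are the choice of coupling in each direction (optimal for $W_1$ in the first inequality, optimal for $\widetilde W_q$ in the second) and the justification that the infima are attained or can be approximated.
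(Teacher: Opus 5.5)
Your proof is correct and follows essentially the same route as the paper: Jensen's inequality for the concave map $t\mapsto t^q$ gives the first inequality, and the pointwise bound $d_X\le \diam(X)^{1-q}d_X^q$ gives the second. The only difference is that the paper applies both bounds to an arbitrary coupling and then takes the infimum, so it never needs optimal or $\varepsilon$-optimal couplings.
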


\begin{proof}
We first prove that $W_q(\mu,\nu) \le W_1(\mu,\nu)$. For any coupling $\gamma \in \Gamma(\mu,\nu)$, since $t \mapsto t^q$ is concave on $[0,\infty)$ for $0<q<1$, Jensen's inequality yields
\[
\int_{X \times X} d_X(x,y)^q \, d\gamma(x,y)
\le \left( \int_{X \times X} d_X(x,y)\, d\gamma(x,y) \right)^q.
\]
Taking the infimum over $\gamma \in \Gamma(\mu,\nu)$ gives

\[
\widetilde W_q(\mu,\nu) \le W_1(\mu,\nu)^q.
\]
We now prove that $W_1(\mu,\nu) \le \diam(X)^{1-q} \widetilde W_q(\mu,\nu)$. For any coupling $\gamma \in \Gamma(\mu,\nu)$, since $d_X(x,y)\le \diam(X)$ and $1-q>0$, we have
\[
d_X(x,y) = d_X(x,y)^{1-q} d_X(x,y)^q \le \diam(X)^{1-q} d_X(x,y)^q.
\]
Integrating with respect to $\gamma$ yields
\[
\int_{X \times X} d_X(x,y)\, d\gamma(x,y)
\le \diam(X)^{1-q} \int_{X \times X} d_X(x,y)^q \, d\gamma(x,y).
\]
Taking the infimum over all couplings $\gamma \in \Gamma(\mu,\nu)$ gives
\[
W_1(\mu,\nu) \le   \diam(X)^{1-q}\,\widetilde{W}_q(\mu,\nu).
\]
This completes the proof.
\end{proof}

The next result is standard for $q \ge 1$; for completeness, we show that it   holds for  all $q > 0$.

\begin{lemma}\label{le.contlip}
Let \(h:X\to X\) be \(L\)-Lipschitz. Then, for any $q>0$ and any  \(\mu,\nu\in\mathcal M_1(X)\),
\[
\widetilde W_q(h_*\mu,h_*\nu)\le L^q\,\widetilde W_q(\mu,\nu).
\]
\end{lemma}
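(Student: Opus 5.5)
The plan is to push optimal couplings forward by $h\times h$. Let \(\gamma\in\Gamma(\mu,\nu)\) be any coupling; existence of an optimal one is not even needed, since we will take an infimum at the end. Define \(\tilde\gamma=(h\times h)_*\gamma\), a Borel probability on \(X\times X\) because \(h\) is continuous, hence Borel measurable. The first step is to check that \(\tilde\gamma\in\Gamma(h_*\mu,h_*\nu)\). For a Borel set \(A\subset X\) we have
\[
\tilde\gamma(A\times X)=\gamma\bigl(h^{-1}(A)\times h^{-1}(X)\bigr)=\gamma\bigl(h^{-1}(A)\times X\bigr)=\mu(h^{-1}(A))=h_*\mu(A),
\]
and the second marginal is handled in the same way.

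The second step is the cost estimate. By the change of variables formula for pushforwards,
\[
\int_{X\times X} d_X(u,v)^q\,d\tilde\gamma(u,v)=\int_{X\times X} d_X(h(x),h(y))^q\,d\gamma(x,y).
\]
Since \(h\) is \(L\)-Lipschitz, \(d_X(h(x),h(y))\le L\,d_X(x,y)\). The map \(t\mapsto t^q\) is nondecreasing on \([0,\infty)\) for every \(q>0\), so \(d_X(h(x),h(y))^q\le L^q d_X(x,y)^q\) pointwise. This monotonicity is the only property of \(t^q\) we use, which is why no restriction \(q\ge1\) is needed, unlike arguments relying on triangle inequalities or duality.

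Integrating, we get \(\widetilde W_q(h_*\mu,h_*\nu)\le\int d_X^q\,d\tilde\gamma\le L^q\int d_X^q\,d\gamma\). Taking the infimum over \(\gamma\in\Gamma(\mu,\nu)\) gives the claim. For \(q\ge1\) we interpret \(\widetilde W_q\) as \(W_q^q\), consistent with \eqref{eq.wq}, and the same computation applies verbatim; taking \(q\)-th roots recovers the standard \(W_q(h_*\mu,h_*\nu)\le L\,W_q(\mu,\nu)\).

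There is no real obstacle here. The only points needing care are the measurability of \(h\times h\) and the verification of the marginals, both of which are immediate from continuity.
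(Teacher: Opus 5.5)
Your proposal is correct and follows essentially the same route as the paper: push a coupling forward by $h\times h$, check the marginals, bound $d_X(h(x),h(y))^q\le L^q d_X(x,y)^q$ pointwise, and integrate. The only difference is that you start from an arbitrary coupling and take the infimum at the end, whereas the paper fixes an optimal coupling. This slightly simplifies the argument, since you never need the existence of optimal couplings for the cost $d_X^q$.
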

\begin{proof}
Let \(\gamma\) be an optimal  coupling of \(\mu\) and \(\nu\). Define a new measure
\[
\widetilde\gamma=(h\times h)_*\gamma
\]
on \(X\times X\). 
The first marginal of \(\widetilde\gamma\) is \(h_*\mu\), and the second marginal is \(h_*\nu\), so \(\widetilde\gamma\) is a coupling of \(h_*\mu\) and \(h_*\nu\).
Therefore,
\[
\widetilde W_q(h_*\mu,h_*\nu) 
\le
\int d_X(u,v)^q\,d\widetilde\gamma(u,v).
\]
By definition of push-forward,
\[
\int d_X(u,v)^q\,d\widetilde\gamma(u,v)
=
\int d_X(h(x),h(y))^q\,d\gamma(x,y).
\]
Since \(h\) is \(L\)-Lipschitz,
\[
d_X(h(x),h(y))^q
\le
L^q d(x,y)^q.
\]
Hence
\[
\widetilde W_q(h_*\mu,h_*\nu) 
\le
L^q\int d_X(x,y)^q\,d\gamma(x,y).
\]
Since  \(\gamma\) has been chosen  an optimal coupling between \(\mu\) and \(\nu\), we have
\[
\int d_X(x,y)^q\,d\gamma(x,y)=\widetilde W_q(\mu,\nu).
\]
Then
\[
\widetilde W_q(h_*\mu,h_*\nu)
\le
L^q \widetilde W_q(\mu,\nu),
\]
which gives the desired conclusion.
\end{proof}

The next result gives an estimate for the $\widetilde W_q$ image of  convex sums of probability measures. This will be useful in the proof of Proposition~\ref{le.qholder}.

\begin{lemma}\label{le.wassum}
Assume that  $\mu_1,\mu_1',\dots,\mu_N,\mu_N'$ are measures in $ \mathcal M_1(X)$ and $w_1,w_1',\dots ,w_N,w_N'\ge 0$ are such that $$\sum_{i=1}^N w_i=1\qand \sum_{i=1}^N w_i'=1.$$ Then
 \begin{equation*}
\widetilde{W}_q\left(\sum_{i=1}^Nw_i\mu_i,\sum_{i=1}^Nw_i'\mu_i'\right)
\le
\sum_{i=1}^N \min\{w_i,w_i'\}\widetilde{W}_q(\mu_i,\mu_i')
+ \diam(X)^q \sum_{i=1}^N |w_i-w_i'|.
\end{equation*}

\end{lemma}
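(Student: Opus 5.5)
The plan is to build an explicit coupling of the two mixtures and bound its cost; since $\widetilde W_q$ is an infimum over couplings, any such bound suffices. Set $m_i=\min\{w_i,w_i'\}$, $\mu=\sum_i w_i\mu_i$ and $\mu'=\sum_i w_i'\mu_i'$. For each $i$, let $\gamma_i\in\Gamma(\mu_i,\mu_i')$ be an optimal coupling for $\widetilde W_q$. It exists by compactness of $X$, exactly as for $W_q$ with $q\ge1$, because $d_X^q$ is continuous and bounded and the set of couplings is weak-$^*$ compact.

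The coupling has a matched part and an unmatched part. The matched part is
\[
\gamma_0=\sum_{i=1}^N m_i\,\gamma_i ,
\]
whose marginals are $\sum_i m_i\mu_i\le\mu$ and $\sum_i m_i\mu_i'\le\mu'$. The leftover masses are
\[
\alpha=\sum_i (w_i-m_i)\mu_i\qand \alpha'=\sum_i (w_i'-m_i)\mu_i' .
\]
Both are nonnegative measures with common total mass $\delta=1-\sum_i m_i$. If $\delta>0$, couple them by the product $\gamma_1=\delta^{-1}\,\alpha\otimes\alpha'$, which has marginals $\alpha$ and $\alpha'$; if $\delta=0$, take $\gamma_1=0$. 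Then $\gamma=\gamma_0+\gamma_1$ is a probability measure on $X\times X$ with marginals $\mu$ and $\mu'$, so $\gamma\in\Gamma(\mu,\mu')$.

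For the cost, the matched part contributes exactly $\sum_i m_i\widetilde W_q(\mu_i,\mu_i')$. On the unmatched part we use $d_X^q\le \diam(X)^q$, so it contributes at most $\diam(X)^q\,\delta$. It remains to compare $\delta$ with $\sum_i|w_i-w_i'|$. Using $\min\{a,b\}=\tfrac12(a+b-|a-b|)$ together with $\sum_i w_i=\sum_i w_i'=1$,
\[
\delta=\tfrac12\sum_i|w_i-w_i'|\le\sum_i|w_i-w_i'| .
\]
This gives the claimed inequality, in fact with a spare factor $1/2$.

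There is no serious obstacle. The only points needing care are checking that the product coupling of the leftover masses has the right marginals (both have mass $\delta$), treating the degenerate case $\delta=0$, and noting that optimal couplings exist for $q\in(0,1)$ (or avoiding this by taking $\varepsilon$-optimal couplings and letting $\varepsilon\to0$).
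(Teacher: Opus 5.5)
Your proof is correct. It follows the same strategy as the paper: a matched part $\sum_i \min\{w_i,w_i'\}\gamma_i$ built from optimal couplings, plus a crude coupling of the leftover mass whose cost is bounded by $\diam(X)^q$ times that mass. The difference is in how the leftover mass is coupled, and there your construction is the one that actually works.

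The paper fixes an arbitrary probability $\sigma$ and adds $\sum_i\varepsilon_i(\mu_i\otimes\sigma)+\sum_i\varepsilon_i'(\sigma\otimes\mu_i')$, where $\varepsilon_i=w_i-\min\{w_i,w_i'\}$ and $\varepsilon_i'=w_i'-\min\{w_i,w_i'\}$. The resulting measure has total mass $1+\delta$, with $\delta=\frac12\sum_i|w_i-w_i'|$. Its marginals are $\mu+\delta\sigma$ and $\mu'+\delta\sigma$, so it lies in $\Gamma(\mu,\mu')$ only when $w_i=w_i'$ for all $i$. That surplus mass is exactly why the paper's bound carries $\sum_i|w_i-w_i'|=2\delta$.

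Your normalized product $\delta^{-1}\alpha\otimes\alpha'$ of the two leftover measures has the correct marginals, and you treat $\delta=0$ separately. It gives the sharper estimate with $\frac12\sum_i|w_i-w_i'|$, which implies the stated inequality. So the lemma is correctly justified by your version, including its later use with unequal weights in Proposition~\ref{le.qholder}. In Proposition~\ref{pr.Kcontraction}, where $w_i=w_i'$, the two constructions coincide and the issue does not arise.
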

\begin{proof}
First, we construct explicitly a  coupling between
\[
\mu = \sum_{i=1}^Nw_i\mu_i
\qquad
\mu' = \sum_{i=1}^Nw_i'\mu_i'
\]
that will help us to obtain an upper bound for $\widetilde{W}_q(\mu,\mu')$ in terms of $\widetilde{W}_q(\mu_i,\mu'_i)$.
Define
\[
a_i = \min\{w_i, w_i'\}, 
\quad
\varepsilon_i = w_i - a_i,
\quad
\varepsilon_i' = w_i' - a_i.
\]
Then
\[
\sum_{i=1}^N a_i + \sum_{i=1}^N \varepsilon_i = 1,
\quad
\sum_{i=1}^N a_i + \sum_{i=1}^N \varepsilon_i' = 1,
\]
and
\[
\sum_{i=1}^N \varepsilon_i = \sum_{i=1}^N \varepsilon_i' = \frac12 \sum_{i=1}^N |w_i-w_i'|.
\]
Let $\gamma_i$ be optimal couplings between $\mu_i$ and $\mu_i'$.
Given  an arbitrary probability measure $\sigma$ on $X$, define the measure
\[
\gamma =
\sum_{i=1}^N a_i \gamma_i
\;+\;
\sum_{i=1}^N \varepsilon_i (\mu_i \otimes \sigma)
\;+\;
\sum_{i=1}^N \varepsilon_i' (\sigma \otimes \mu_i'),
\]
with the symbol  $\otimes$ being used to denote  the product measure.
Then $\gamma$ is a coupling of $\mu$ and $\mu'$.
Therefore 
\[
\widetilde{W}_q(\mu,\mu')
\le
\int d(x,y)^q\, d\gamma(x,y).
\]
Splitting according to the definition of $\gamma$,
\begin{align*}
\int d(x,y)^q\, d\gamma
&=
\sum_{i=1}^N a_i \int d(x,y)^q\, d\gamma_i
\\
&\quad +
\sum_{i=1}^N \varepsilon_i \int d(x,y)^q\, d(\mu_i \otimes \sigma)
\\
&\quad +
\sum_{i=1}^N \varepsilon_i' \int d(x,y)^q\, d(\sigma \otimes \mu_i').
\end{align*}
Therefore,
\[
\int d(x,y)^q\, d(\mu_i \otimes \sigma) \le \diam(X)^q,
\quad
\int d(x,y)^q\, d(\sigma \otimes \mu_i') \le \diam(X)^q.
\]
Hence
 \begin{equation}\label{eq.wsum}
\widetilde{W}_q(\mu,\mu')
\le
\sum_{i=1}^N a_i \widetilde{W}_q(\mu_i,\mu_i')
+ \diam(X)^q \sum_{i=1}^N |w_i-w_i'|,
\end{equation}
thus yielding the desired conclusion.
\end{proof}

 \subsection{Mean Contraction}

In this subsection  we show that, under assumptions~\ref{as.fibre}--\ref{as.holder},  the sectional transfer operator $\mathcal K$ has a unique fixed point in the  metric space  $\mathbf P$.

 \begin{lemma} \label{le.meanq}
For $q>0$ sufficiently small, there exists $\tau\in(0,1)$ such that for $\eta$-almost every $\omega\in\Omega$,
\[
\sum_{i=1}^N p_i(\omega)\,\Lip(g_{\theta_i(\omega)})^q \le \tau.
\]
\end{lemma}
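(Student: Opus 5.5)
Write $a_i(\omega)=\log \Lip(g_{\theta_i(\omega)})$. By \ref{as.fibre}, $|a_i(\omega)|\le \log L$ for every $i$ and $\omega$, so the exponents form a uniformly bounded family. We then expand $e^{q a_i}$ to second order with a remainder that is uniform in $\omega$. For $|t|\le \log L$ and $0<q\le 1$, Taylor's theorem gives
\[
e^{qt}\le 1+qt+\tfrac12 q^2 t^2 e^{q|t|}\le 1+qt+\tfrac12 q^2 (\log L)^2 L .
\]
We apply this with $t=a_i(\omega)$, multiply by $p_i(\omega)\ge0$, and sum over $i$. Two facts about the weights are used: $\sum_i p_i(\omega)=1$ for $\eta$-a.e.\ $\omega$ by \eqref{eq:sumpi}, and $\sum_i p_i(\omega)a_i(\omega)\le\lambda$ for $\eta$-a.e.\ $\omega$ by \ref{as.mean}. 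Together they give, for $\eta$-a.e.\ $\omega$,
\[
\sum_{i=1}^N p_i(\omega)\Lip(g_{\theta_i(\omega)})^q\le 1+q\lambda+\tfrac12 q^2 L(\log L)^2 .
\]
Only the intersection of two full-measure sets is involved, so the bound holds almost everywhere.

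Since $\lambda<0$, set $C=\tfrac12 L(\log L)^2$. For $0<q<\min\{1,|\lambda|/(2C)\}$ (any $q\in(0,1)$ works if $C=0$) we get $q^2C<q|\lambda|/2$. The right-hand side is then at most $\tau:=1-q|\lambda|/2<1$, and shrinking $q$ further if necessary makes $\tau>0$. This gives the claim, with a $\tau$ that depends only on $q$, $\lambda$ and $L$.

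The only delicate point is uniformity: the quadratic error must be controlled independently of $\omega$ and $i$, which is exactly what the two-sided Lipschitz bound in \ref{as.fibre} provides. Without the lower bound $1/L$, the $a_i$ could tend to $-\infty$. The argument would still go through in that case, using $e^{qt}\le 1+qt+\tfrac12q^2t^2$ for $t\le0$, but the upper bound $L$ is essential.
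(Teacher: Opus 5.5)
Your proof is correct and is essentially the paper's argument. You expand $e^{q\log\Lip(g_{\theta_i(\omega)})}$ to second order, with an $O(q^2)$ remainder made uniform by the two-sided bound in \ref{as.fibre}, sum against the weights using \eqref{eq:sumpi} and \ref{as.mean}, and then choose $q$ with $Cq\le|\lambda|/2$; the only differences are that you make the constant explicit and check $\tau>0$.

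One caveat concerns your closing aside. Without the lower bound $1/L$ the argument does \emph{not} go through, because the error $\tfrac12q^2\sum_i p_i(\omega)a_i(\omega)^2$ is then no longer uniformly bounded in $\omega$. In fact the lemma itself can fail. Take weights $1-\delta(\omega)$ and $\delta(\omega)$ with $\operatorname{ess\,inf}\delta=0$, and Lipschitz constants $L>1$ and $e^{-K(\omega)}$ with $\delta(\omega)K(\omega)\ge\log L+|\lambda|$. This satisfies \ref{as.mean}, yet the sum has essential supremum at least $L^q>1$ for every $q>0$.
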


\begin{proof}
Set for $i=1,\dots,N$
$$
a_i(\omega)=\Lip(g_{\theta_i(\omega)}).
$$
By~\ref{as.fibre}, there exists a constant 
$
M >0
$
be such that, for all $i=1,\dots,N$ and $\eta$-almost all $\omega\in\Omega$,
\[
|\log a_i(\omega)| \le M.
\]
For $x\in[-M,M]$ and $q> 0$, Taylor's formula gives
\[
e^{qx}
=
1 + qx + R(q,x),\quad |R(q,x)| \le C q^2,
\]
with a constant $C>0$ (depending only on $M$). Applying this to $x=\log a_i(\omega)$ yields
\[
a_i(\omega)^q
=
1 + q\log a_i(\omega) + R_i(q,\omega),
\quad |R_i(q,\omega)| \le C q^2.
\]
Define
\[
F_q(\omega):=\sum_{i=1}^N p_i(\omega)a_i(\omega)^q.
\]
Using the expansion above for $a_i(\omega)^q$,
\[
F_q(\omega)
=
\sum_{i=1}^N p_i(\omega)
+
q\sum_{i=1}^N p_i(\omega)\log a_i(\omega)
+
\sum_{i=1}^N p_i(\omega)R_i(q,\omega).
\]
Since \eqref{eq:sumpi}
holds, we get
\[
F_q(\omega)
=
1 + q\sum_{i=1}^N p_i(\omega)\log a_i(\omega) + E(q,\omega),
\]
where
\[
|E(q,\omega)|
= \sum_{i=1}^N p_i(\omega)|R_i(q,\omega)|
\le C q^2 \sum_{i=1}^N p_i(\omega)
= C q^2.
\]
Using~\ref{as.mean}, we obtain
\[
F_q(\omega)
\le 1 + q\lambda + C q^2.
\]
Choose $q>0$ such that
\[
Cq \le \frac{|\lambda|}{2}.
\]
Then
\[
F_q(\omega)
\le 1 + \frac{\lambda}{2}q.
\]
Set
\[
\tau= 1 + \frac{\lambda}{2}q.
\]
Since $\lambda<0$, we have $\tau\in(0,1)$. Therefore,
\[
\sum_{i=1}^N p_i(\omega)\,a_i(\omega)^q \le \tau
,
\]
for $\eta$-almost every $\omega\in\Omega$.
\end{proof}
 
 \begin{remark}\label{re.qtau}
By Lemma~\ref{le.meanq}, for all sufficiently small $q\in(0,1)$ the desired conclusion holds. 
Choosing $q$ sufficiently small, we may assume that \ref{as.holder1} also holds for this value of $q$. 
Let $\tau\in(0,1)$ be the corresponding value given by Lemma~\ref{le.meanq}.
\end{remark}

\begin{proposition}\label{pr.Kcontraction}
There exists
\(C>0\)  
such that for all \(\bm\mu,\bm\nu\in\mathbf P\) and \(n\ge 1\)
\[
d_{\mathbf{P}}(\mathcal K^n\bm\mu,\mathcal K^n\bm\nu)
\le
C\tau^n d_{\mathbf{P}}(\bm\mu,\bm\nu)^q.
\]
\end{proposition}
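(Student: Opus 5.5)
We work with $\widetilde W_q$ in place of $W_1$, using $q$ and $\tau$ as fixed in Remark~\ref{re.qtau}. Define the auxiliary quantity
\[
\widetilde d_q(\bm\mu,\bm\nu)=\operatorname*{ess\,sup}_{\omega}\widetilde W_q(\mu_\omega,\nu_\omega).
\]
The plan is to show that $\mathcal K$ contracts $\widetilde d_q$ by the factor $\tau$ at each step, iterate, and convert back to $d_{\mathbf P}$ with Lemma~\ref{le.w1wq}.

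\emph{One-step contraction.} Fix $\omega$ in a full-measure set where \eqref{eq:sumpi} and Lemma~\ref{le.meanq} hold. Write $(\mathcal K\bm\mu)_\omega=\sum_i p_i(\omega)(g_{\theta_i(\omega)})_*\mu_{\theta_i(\omega)}$, and similarly for $\bm\nu$. Apply Lemma~\ref{le.wassum} with equal weights $w_i=w_i'=p_i(\omega)$, so the weight-mismatch term vanishes and $\min\{w_i,w_i'\}=p_i(\omega)$. This gives
\[
\widetilde W_q\big((\mathcal K\bm\mu)_\omega,(\mathcal K\bm\nu)_\omega\big)\le \sum_i p_i(\omega)\,\widetilde W_q\big((g_{\theta_i(\omega)})_*\mu_{\theta_i(\omega)},(g_{\theta_i(\omega)})_*\nu_{\theta_i(\omega)}\big).
\]
By Lemma~\ref{le.contlip}, each term is at most $\Lip(g_{\theta_i(\omega)})^q\,\widetilde W_q(\mu_{\theta_i(\omega)},\nu_{\theta_i(\omega)})$.

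The one point needing care is bounding $\widetilde W_q(\mu_{\theta_i(\omega)},\nu_{\theta_i(\omega)})$ by the essential supremum $\widetilde d_q(\bm\mu,\bm\nu)$. This requires that $\theta_i$ maps $\eta$-null sets to $\eta$-null sets. That holds because $\theta_i$ is a $C^1$ inverse branch and $\eta\sim m$ by \ref{as.base2}: if $E$ is null, then $\theta_i^{-1}(E)\subset f(E)$ is null, since $C^1$ maps send Lebesgue-null sets to null sets. Granting this, Lemma~\ref{le.meanq} yields
\[
\widetilde d_q(\mathcal K\bm\mu,\mathcal K\bm\nu)\le\tau\,\widetilde d_q(\bm\mu,\bm\nu).
\]
Note that the weights $p_i$ enter the estimate only through Lemma~\ref{le.meanq}.

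\emph{Iteration and comparison.} Induction gives $\widetilde d_q(\mathcal K^n\bm\mu,\mathcal K^n\bm\nu)\le \tau^n\widetilde d_q(\bm\mu,\bm\nu)$. By Lemma~\ref{le.w1wq}, applied pointwise and then passed to essential suprema, $d_{\mathbf P}\le \diam(X)^{1-q}\,\widetilde d_q$ and $\widetilde d_q\le d_{\mathbf P}^q$. Chaining these,
\[
d_{\mathbf P}(\mathcal K^n\bm\mu,\mathcal K^n\bm\nu)\le \diam(X)^{1-q}\tau^n d_{\mathbf P}(\bm\mu,\bm\nu)^q,
\]
so the statement holds with $C=\diam(X)^{1-q}$.

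The main obstacle is the measure-theoretic step of controlling $\widetilde W_q$ at the points $\theta_i(\omega)$ by the essential supremum. The remaining steps follow directly from Lemmas~\ref{le.wassum}, \ref{le.contlip} and~\ref{le.meanq}.
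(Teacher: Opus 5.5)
Your proposal is correct and follows the paper's proof essentially step for step. You get the one-step $\tau$-contraction of the $\widetilde W_q$-gap from Lemma \ref{le.wassum} with equal weights, Lemma \ref{le.contlip} and Lemma \ref{le.meanq}, then use both inequalities of Lemma \ref{le.w1wq} to obtain $C=\diam(X)^{1-q}$. The only difference is that you take an essential supremum and check that $\theta_i^{-1}(E)$ is $\eta$-null whenever $E$ is; the paper skips this point by using a plain supremum over $\omega$.
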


\begin{proof}
For \(\bm\mu,\bm\nu\in\mathbf P\), define
\[
\Delta_q(\bm\mu,\bm\nu)
=
\sup_{\omega\in\Omega}
\widetilde{W}_q(\mu_\omega,\nu_\omega).
\]
We do not claim that $\Delta_q$ defines a metric, since $\widetilde W_q$ itself fails to be a metric for  $q\in(0,1)$.
We first show that $\mathcal K$ is a ``contraction'' with respect to $\Delta_q$.
Fix \(\omega\in\Omega\). Write
\[
(\mathcal K\bm\mu)_\omega=\sum_{i=1}^N p_i(\omega)\,\xi_i,
\qquad
(\mathcal K\bm\nu)_\omega=\sum_{i=1}^N p_i(\omega)\,\zeta_i,
\]
where
\[
\xi_i=(g_{\theta_i(\omega)})_*\mu_{\theta_i(\omega)},
\qquad
\zeta_i=(g_{\theta_i(\omega)})_*\nu_{\theta_i(\omega)}.
\]
For each \(i=1,\dots,N\), let \(\gamma_i\) be an optimal coupling between \(\zeta_i\) and \(\xi_i\), so that
\[
\widetilde{W}_q(\zeta_i,\xi_i)=\int d_X(x,y)^q\,d\gamma_i(x,y).
\]
Define
\[
\gamma=\sum_{i=1}^N p_i(\omega)\,\gamma_i,
\]
which is a coupling between \((\mathcal K\bm\mu)_\omega\) and \((\mathcal K\bm\nu)_\omega\). Using Lemma~\ref{le.wassum} with $w_i=w_i'=p_i(\omega)$
\[
\widetilde{W}_q((\mathcal K\bm\mu)_\omega,(\mathcal K\bm\nu)_\omega)
\le
\sum_{i=1}^N p_i(\omega)\,\widetilde{W}_q(\zeta_i,\xi_i).
\]
By Lemma~\ref{le.contlip},
\[
\widetilde{W}_q(\xi_i,\zeta_i)
\le
\Lip(g_{\theta_i(\omega)})^q\,
\widetilde{W}_q(\mu_{\theta_i(\omega)},\nu_{\theta_i(\omega)}).
\]
Therefore,
\[
\widetilde W_q((\mathcal K\bm\mu)_\omega,(\mathcal K\bm\nu)_\omega)
\le
\sum_{i=1}^N p_i(\omega)\Lip(g_{\theta_i(\omega)})^q
\widetilde{W}_q(\mu_{\theta_i(\omega)},\nu_{\theta_i(\omega)})
\le
\sum_{i=1}^N p_i(\omega)\Lip(g_{\theta_i(\omega)})^q
\Delta_q(\bm\mu,\bm\nu).
\]
Using Lemma~\ref{le.meanq}, we obtain
\[
\Delta_q(\mathcal K\bm\mu,\mathcal K\bm\nu)
\le
\tau\,\Delta_q(\bm\mu,\bm\nu).
\]
Hence, for all $n\ge 1$,
\begin{equation}\label{eq.detaqcon}
\Delta_q(\mathcal K^n\bm\mu,\mathcal K^n\bm\nu)
\le
\tau^n \Delta_q(\bm\mu,\bm\nu).
\end{equation}
Now, by the second inequality of Lemma~\ref{le.w1wq},
\begin{align}
d_{\mathbf P}(\mathcal K^n\bm\mu,\mathcal K^n\bm\nu)
&=
\sup_{\omega\in\Omega}
W_1((\mathcal K^n\bm\mu)_\omega,(\mathcal K^n\bm\nu)_\omega)\nonumber\\
&\le
\diam(X)^{1-q}
\sup_{\omega\in\Omega}
\widetilde{W}_q((\mathcal K^n\bm\mu)_\omega,(\mathcal K^n\bm\nu)_\omega)\nonumber\\
&=
\diam(X)^{1-q}
\Delta_q(\mathcal K^n\bm\mu,\mathcal K^n\bm\nu).\label{first}
\end{align}
Using \eqref{eq.detaqcon} and the first inequality of Lemma~\ref{le.w1wq}, we get
\begin{align}
\Delta_q(\mathcal K^n\bm\mu,\mathcal K^n\bm\nu)
&\le
\tau^n
\Delta_q(\bm\mu,\bm\nu)\nonumber\\
&\le
\tau^n
\sup_{\omega\in\Omega}
\widetilde W_q(\mu_\omega,\nu_\omega) \nonumber\\
&\le
\tau^n
\sup_{\omega\in\Omega}
W_1(\mu_\omega,\nu_\omega)^q\nonumber\\
&=
\tau^n d_{\mathbf P}(\bm\mu,\bm\nu)^q.\label{second}
\end{align}
Combining \eqref{first} and \eqref{second} yields the conclusion with \(C=\diam(X)^{1-q}\).
\end{proof}



\begin{corollary}\label{co.fixK}
The operator $\mathcal K$ admits a unique fixed point $\bm{\nu} \in \mathbf P$. Moreover,
\[
\mathcal K^n \bm{\mu} \longrightarrow \bm{\nu}, \quad \text{as } n \to \infty,
\]
for every $\bm{\mu} \in \mathbf P$.
\end{corollary}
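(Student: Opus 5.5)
The plan is to deduce the corollary from Proposition~\ref{pr.Kcontraction} by a Banach-type fixed point argument in the complete metric space $(\mathbf P,d_{\mathbf P})$. The proposition does not make $\mathcal K$ a strict contraction for $d_{\mathbf P}$, since it controls $d_{\mathbf P}(\mathcal K^n\bm\mu,\mathcal K^n\bm\nu)$ by a power $q<1$ of the initial distance. It does, however, give a bound uniform in the initial data. Since $W_1(\mu_\omega,\nu_\omega)\le\diam(X)$ for all $\omega$, we have
\[
d_{\mathbf P}(\mathcal K^n\bm\mu,\mathcal K^n\bm\nu)\le C\,\diam(X)^q\,\tau^n
\quad\text{for all } \bm\mu,\bm\nu\in\mathbf P,\ n\ge1,
\]
and this uniform exponential estimate is all that is needed.

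\emph{Existence.} Fix any $\bm\mu\in\mathbf P$ and show that $(\mathcal K^n\bm\mu)_n$ is Cauchy. For $m>n$, write $\mathcal K^m\bm\mu=\mathcal K^n(\mathcal K^{m-n}\bm\mu)$ and apply the uniform bound to the pair $\mathcal K^{m-n}\bm\mu$ and $\bm\mu$. This gives
\[
d_{\mathbf P}(\mathcal K^m\bm\mu,\mathcal K^n\bm\mu)\le C\diam(X)^q\tau^n\to0 .
\]
By completeness of $\mathbf P$, the sequence converges to some $\bm\nu\in\mathbf P$.

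To see that $\bm\nu$ is fixed, use continuity of $\mathcal K$ on $\mathbf P$. The one-step contraction of $\Delta_q$ together with Lemma~\ref{le.w1wq} gives $d_{\mathbf P}(\mathcal K\bm\alpha,\mathcal K\bm\beta)\le C\,d_{\mathbf P}(\bm\alpha,\bm\beta)^q$, which is the case $n=1$ of the proposition. Hence $\mathcal K^{n+1}\bm\mu\to\mathcal K\bm\nu$, while also $\mathcal K^{n+1}\bm\mu\to\bm\nu$, so $\mathcal K\bm\nu=\bm\nu$.

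\emph{Uniqueness.} If $\bm\nu,\bm\nu'$ are both fixed, then
\[
d_{\mathbf P}(\bm\nu,\bm\nu')=d_{\mathbf P}(\mathcal K^n\bm\nu,\mathcal K^n\bm\nu')\le C\diam(X)^q\tau^n
\]
for every $n$. Letting $n\to\infty$ gives $\bm\nu=\bm\nu'$.

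\emph{Convergence from every initial section.} For arbitrary $\bm\mu$, the same argument gives
\[
d_{\mathbf P}(\mathcal K^n\bm\mu,\bm\nu)=d_{\mathbf P}(\mathcal K^n\bm\mu,\mathcal K^n\bm\nu)\le C\diam(X)^q\tau^n,
\]
which is in fact an exponential rate, uniform in $\bm\mu$.

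\emph{Technical points.} The main thing to check is that $\mathcal K$ is well defined on $\eta$-equivalence classes, so that the ess sup and sup in the metrics are compatible. This uses the fact that $\theta_i$ maps $\eta$-null sets to null sets, since $f$ is $C^1$ on each branch and $\eta\sim m$. The step most likely to need care is completeness of $\mathbf P$, which is asserted in the setup and which I would take as given.
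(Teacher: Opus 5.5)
Your argument is correct and is essentially the paper's. The paper combines Proposition~\ref{pr.Kcontraction} with the fixed point Lemma~\ref{le.fix} in the appendix, whose proof has the same three steps (Cauchy sequence, fixedness via the $n=1$ case, uniqueness); the only difference is that you use $d_{\mathbf P}\le\diam(X)$ to get a bound uniform in the initial sections, which replaces that lemma's telescoping sum and its division by $d(x^*,y^*)^q$, and directly gives the uniform rate $C\diam(X)^q\tau^n$ that the paper itself uses later in the fibre-term estimate.
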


\begin{proof}
This follows immediately from Proposition~\ref{pr.Kcontraction} and and the fixed point Lemma~\ref{le.fix}.
\end{proof}

\subsection{Hölder Regularity}

We prove that the invariant section inherits the Hölder regularity of the base dynamics and fibre maps. The main difficulty is that contraction holds only in a logarithmic (averaged) sense, so we work at the level of iterates and derive a uniform Hölder bound via a Lasota--Yorke type inequality in the $\widetilde{W}_q$ scale.
Fix  the constants  $q,\tau\in(0,1)$ as in Remark~\ref{re.qtau} 
and define 
\[
\mathbf{P}_{q} = \big\{ \bm{\nu} : \Omega \to \mathcal  M_1(X)  \mid   [\bm{\nu}]_{q} < \infty \big\},
\]
where 
\[
[\bm{\nu}]_{q} =  
 \sup_{\omega \neq \omega'} \frac{\widetilde{W}_q(\nu_\omega, \nu_{\omega'})}{d_\Omega(\omega, \omega')^q}.
\]
Note that by the second inequality of Lemma~\ref{le.w1wq},   every $\bm\nu\in\mathrm P_q$ is a $q$-H\"older continuous map with respect to  the metric $W_1$ on $\mathcal M_1(X)$.

\begin{proposition}
\label{le.qholder}
There exists $C>0$ such that for all $\bm{\nu} \in \mathbf{P}_{q} $ and $n\ge 1$
\[
[(\mathcal K^n \bm{\nu})]_{q}
 \le
\tau^n[\bm{\nu}]_{q}
+
\frac{C}{1-\tau}.\
\]
\end{proposition}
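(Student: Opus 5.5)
The plan is to prove a one-step Lasota--Yorke inequality
\[
[\mathcal K\bm\nu]_q \le \tau[\bm\nu]_q + C
\]
and then iterate it. Iterating gives
\[
[\mathcal K^n\bm\nu]_q\le \tau^n[\bm\nu]_q + C\sum_{j=0}^{n-1}\tau^j\le \tau^n[\bm\nu]_q+\frac{C}{1-\tau}.
\]

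For the one-step bound, fix $\omega\neq\omega'$ and write
\[
(\mathcal K\bm\nu)_\omega=\sum_i p_i(\omega)\xi_i,\qquad (\mathcal K\bm\nu)_{\omega'}=\sum_i p_i(\omega')\xi_i',
\]
where $\xi_i=(g_{\theta_i(\omega)})_*\nu_{\theta_i(\omega)}$ and $\xi_i'=(g_{\theta_i(\omega')})_*\nu_{\theta_i(\omega')}$. Lemma~\ref{le.wassum} bounds $\widetilde W_q$ of these two measures by
\[
\sum_i \min\{p_i(\omega),p_i(\omega')\}\,\widetilde W_q(\xi_i,\xi_i') + \diam(X)^q\sum_i|p_i(\omega)-p_i(\omega')|.
\]
By \ref{as.holder1}, the second term is at most $\diam(X)^q N L\, d_\Omega(\omega,\omega')^q$.

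For the first term, split $\xi_i$ versus $\xi_i'$ through the intermediate measure $\chi_i=(g_{\theta_i(\omega')})_*\nu_{\theta_i(\omega)}$. Since $\widetilde W_q$ is not a metric, the triangle inequality is not available directly. Instead, since $d^q$ is itself a metric for $q\in(0,1)$, $\widetilde W_q$ is exactly the $W_1$ distance for the snowflaked metric $d_X^q$, and so it does satisfy the triangle inequality. This is the point to check carefully; alternatively, one can glue optimal couplings directly using $(a+b)^q\le a^q+b^q$.

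The two pieces are then bounded as follows.
\begin{enumerate}
\item For $\widetilde W_q(\xi_i,\chi_i)$, couple by $(g_{\theta_i(\omega)}\times g_{\theta_i(\omega')})_*\delta$, where $\delta$ is the diagonal coupling of $\nu_{\theta_i(\omega)}$ with itself. By \ref{as.holder2} this gives $\widetilde W_q(\xi_i,\chi_i)\le \sup_x d_X(g_{x,i}(\omega),g_{x,i}(\omega'))^q\le L^q d_\Omega(\omega,\omega')^q$.
\item For $\widetilde W_q(\chi_i,\xi_i')$, Lemma~\ref{le.contlip} gives the bound $\Lip(g_{\theta_i(\omega')})^q\,\widetilde W_q(\nu_{\theta_i(\omega)},\nu_{\theta_i(\omega')})$. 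Using $\Lip(\theta_i)\le1$, this is at most $\Lip(g_{\theta_i(\omega')})^q[\bm\nu]_q\, d_\Omega(\omega,\omega')^q$.
\end{enumerate}

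The delicate step is the weight. We have $\min\{p_i(\omega),p_i(\omega')\}\le p_i(\omega')$, and summing gives
\[
\sum_i p_i(\omega')\Lip(g_{\theta_i(\omega')})^q\le\tau
\]
by Lemma~\ref{le.meanq}, applied at $\omega'$. Here the choice of intermediate measure matters: pairing the Lipschitz constant at $\theta_i(\omega')$ with the weight $p_i(\omega')$ is what makes Lemma~\ref{le.meanq} apply.

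A technical obstacle is that Lemma~\ref{le.meanq} holds only $\eta$-a.e. Since sections are defined a.e., I would take the supremum in $[\cdot]_q$ over a full-measure set, or use continuity of $p_i$ and $\theta_i$ to extend the bound to all $\omega'$. Collecting terms gives $C=NL^q+N L\diam(X)^q$, which completes the one-step inequality.
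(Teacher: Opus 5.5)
Your proof is correct and follows the paper's argument: Lemma~\ref{le.wassum}, a split of each $\widetilde W_q(\xi_i,\xi_i')$ into a fibre-Lipschitz piece controlled by $[\bm\nu]_q$ and $\Lip(\theta_i)\le 1$ plus an \ref{as.holder2} piece, then Lemma~\ref{le.meanq} on the weighted sum, then iteration. The differences are cosmetic. The paper pushes forward one optimal coupling of $\nu_{\theta_i(\omega)},\nu_{\theta_i(\omega')}$ and splits pointwise via $(a+b)^q\le a^q+b^q$, which is your intermediate-measure step but routed through $g_{\theta_i(\omega)}$ with weight $p_i(\omega)$; either pairing works because $\min\{p_i(\omega),p_i(\omega')\}$ is dominated by both weights. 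Also, $\sum_i\min\{p_i(\omega),p_i(\omega')\}\le 1$ lets you replace $NL^q$ by $L^q$.
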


\begin{proof}
Fix $\bm{\nu} \in \mathbf{P}_{q} $ and  $\omega,\omega' \in \Omega$. For each $i=1,\dots N$, define
\[
\mu_i = (g_{\theta_i(\omega)})_* \nu_{\theta_i(\omega)},
\qand
\mu_i' = (g_{\theta_i(\omega')})_* \nu_{\theta_i(\omega')}.
\]
Note that
\[
(\mathcal K\bm\nu)_\omega = \sum_{i=1}^N p_i(\omega)\mu_i
\qand
(\mathcal K\bm\nu)_{\omega'}= \sum_{i=1}^N p_i(\omega')\mu_i'.
\]
By Lemma~\ref{le.wassum}, we have
 \begin{equation}\label{eq.wsum}
\widetilde{W}_q\left((\mathcal K\bm\nu)_\omega,(\mathcal K\bm\nu)_{\omega'}\right)
\le
\sum_{i=1}^N a_i \widetilde{W}_q(\mu_i,\mu_i')
+ \diam(X)^q \sum_{i=1}^N |p_i(\omega)-p_i(\omega')|, 
\end{equation}
where
\[
a_i = \min\{p_i(\omega), p_i(\omega')\}.
\]
Now, we estimate each  $\widetilde{W}_q(\mu_i,\mu_i')$.
Let  $\gamma_i$ be an optimal coupling between $\nu_{\theta_i(\omega)}$ and $\nu_{\theta_i(\omega')}$. Define the pushforward measure
\[
\widetilde{\gamma}_i = (g_{\theta_i(\omega)} \times g_{\theta_i(\omega')})_* \gamma_i.
\]
Then $\widetilde{\gamma}_i$ is a coupling between $\mu_i$ and $\mu_i'$, hence by definition of $\widetilde{W}_q$,
\[
\widetilde{W}_q(\mu_i,\mu_i')
\le
\int d_X(u,v)^q\, d\widetilde{\gamma}_i(u,v).
\]
By the change-of-variables formula for pushforward measures,
\[
\int d_X(u,v)^q\, d\widetilde{\gamma}_i(u,v)
=
\int d_X\bigl(g_{\theta_i(\omega)}(x),\, g_{\theta_i(\omega')}(y)\bigr)^q \, d\gamma_i(x,y).
\]
We now estimate the integrand. For all $x,y \in X$,
\[
d_X\bigl(g_{\theta_i(\omega)}(x), g_{\theta_i(\omega')}(y)\bigr)
\le
d_X\bigl(g_{\theta_i(\omega)}(x), g_{\theta_i(\omega)}(y)\bigr)
+
d_X\bigl(g_{\theta_i(\omega)}(y), g_{\theta_i(\omega')}(y)\bigr).
\]
Using the concavity of the map $z\mapsto z^q$ in $[0,\infty)$ for $q \in (0,1]$, we obtain
\begin{align*}
d_X\bigl(g_{\theta_i(\omega)}(x), g_{\theta_i(\omega')}(y)\bigr)^q
&\le
d_X\bigl(g_{\theta_i(\omega)}(x), g_{\theta_i(\omega)}(y)\bigr)^q
 +
d_X\bigl(g_{\theta_i(\omega)}(y), g_{\theta_i(\omega')}(y)\bigr)^q.
\end{align*}
Integrating with respect to $\gamma_i$ yields
\begin{align*}
\widetilde{W}_q(\mu_i,\mu_i')
&\le
\int d_X\bigl(g_{\theta_i(\omega)}(x), g_{\theta_i(\omega)}(y)\bigr)^q \, d\gamma_i(x,y)
+
\int d_X\bigl(g_{\theta_i(\omega)}(y), g_{\theta_i(\omega')}(y)\bigr)^q \, d\gamma_i(x,y).
\end{align*}
For the first term, since $g_{\theta_i(\omega)}$ is a Lipschitz map by \ref{as.fibre} and   $\gamma_i$ is an optimal coupling between $\nu_{\theta_i(\omega)}$ and $\nu_{\theta_i(\omega')}$,
\begin{align*}
\int d_X\bigl(g_{\theta_i(\omega)}(x), g_{\theta_i(\omega)}(y)\bigr)^q \, d\gamma_i(x,y) &\le
\Lip(g_{\theta_i(\omega)})^q
\int d_X(x,y)^q \, d\gamma_i(x,y)\\
&=
\Lip(g_{\theta_i(\omega)})^q
\widetilde{W}_q(\nu_{\theta_i(\omega)}, \nu_{\theta_i(\omega')}).
\end{align*}
For the second term, since   \ref{as.holder2} holds,
\[
\int d_X\bigl(g_{\theta_i(\omega)}(y), g_{\theta_i(\omega')}(y)\bigr)^q \, d\gamma_i(x,y)
\le
\sup_{y \in X}
d_X\bigl(g_{\theta_i(\omega)}(y), g_{\theta_i(\omega')}(y)\bigr)^q \le
L ^q\, d_\Omega(\omega,\omega')^{  q}.
\]
Combining the above estimates, we conclude that
 \begin{equation}\label{eq.neweq}
\widetilde{W}_q(\mu_i,\mu_i')
\le
\Lip(g_{\theta_i(\omega)})^q
\widetilde{W}_q(\nu_{\theta_i(\omega)}, \nu_{\theta_i(\omega')})
+
L^q d_\Omega(\omega,\omega')^q.
\end{equation}
On the other hand, it follows from the definition of   $[\bm{\nu}]_{q}$ and~\ref{as.base1} that
$\omega,\omega'\in\Omega$,
\[
\widetilde{W}_q(\nu_{\theta_i(\omega)},\nu_{\theta_i(\omega')})
\le
[\bm{\nu}]_{q}\,
d_\Omega\big(\theta_i(\omega),\theta_i(\omega')\big)^q\le  
[\bm{\nu}]_{q}\,
d_\Omega(\omega,\omega')^{  q}.\]
Plugging this into the estimate obtained in \eqref{eq.neweq}, we conclude that
\[
\widetilde{W}_q(\mu_i,\mu_i')
\le
  \Lip(g_{\theta_i(\omega)})^q
[\bm{\nu}]_{q}
d_\Omega(\omega,\omega')^q
+
L^q d_\Omega(\omega,\omega')^q.
\]

Multiplying by $a_i$ and summing over $i$, we obtain
\begin{align*}
\sum_{i=1}^N a_i \widetilde{W}_q(\mu_i,\mu_i')
 \le
[\bm{\nu}]_{q}\,
d_\Omega(\omega,\omega')^q 
\sum_{i=1}^N a_i \Lip(g_{\theta_i(\omega)})^q
  +
L^q  d_\Omega(\omega,\omega')^q  \sum_{i=1}^N a_i.
\end{align*}
Since $\sum_{i=1}^N a_i \le 1$, this gives
\[
\sum_{i=1}^N a_i \widetilde{W}_q(\mu_i,\mu_i')
\le
[\bm{\nu}]_{q}\,
d_\Omega(\omega,\omega')^q
\sum_{i=1}^N a_i \Lip(g_{\theta_i(\omega)})^q
+
L^q  d_\Omega(\omega,\omega')^q .
\]
Next, since $a_i \le p_i(\omega)$, we have
\[
\sum_{i=1}^N a_i \Lip(g_{\theta_i(\omega)})^q
\le
\sum_{i=1}^N p_i(\omega) \Lip(g_{\theta_i(\omega)})^q.
\]
Therefore, using~\eqref{eq.wsum}
\begin{align*}
\widetilde{W}_q\left((\mathcal K\bm\nu)_\omega,(\mathcal K\bm\nu)_{\omega'}\right)
\le &
[\bm{\nu}]_{q}\,
d_\Omega(\omega,\omega')^q
\sum_{i=1}^N p_i(\omega) \Lip(g_{\theta_i(\omega)})^q\\
&
  +
L^q  d_\Omega(\omega,\omega')^q
+
\diam(X)^q  \sum_{i=1}^N |p_i(\omega)-p_i(\omega')|.
\end{align*}
It follows from Lemma~\ref{le.meanq}  that for all $\omega\in\Omega$
\[
  \sum_{i=1}^N p_i(\omega) \Lip(g_{\theta_i(\omega)})^q \le \tau.
\]
Hence,
\[
\widetilde{W}_q\left((\mathcal K\bm\nu)_\omega,(\mathcal K\bm\nu)_{\omega'}\right)
\le
\tau[\bm{\nu}]_{q} d_\Omega(\omega,\omega')^q 
+ L^q  d_\Omega(\omega,\omega')^q+
\diam(X)^q  \sum_{i=1}^N |p_i(\omega)-p_i(\omega')|.
\]
Dividing by $d_\Omega(\omega,\omega')^q$, using~\ref{as.holder1}   and Remark~\ref{re.qtau} yields
\[
[\mathcal K \bm{\nu}]_{q}
\le
\tau[\bm{\nu}]_{q} + C ,
\]
with $C=L^q+\diam(X)^q NL$.
Therefore,
\[
[(\mathcal K^n \bm{\nu})]_{q}
\le
\tau^n[\bm{\nu}]_{q}
+
C\sum_{k=0}^{n-1} \tau^k\le
\tau^n[\bm{\nu}]_{q}
+
\frac{C}{1-\tau}
\]
for all $n\ge1$.
\end{proof}

Elements of $\mathbf P$ are equivalence classes modulo $\eta$. Since we assume in~\ref{as.base}
the measure $\eta$  equivalent to Lebesgue measure on~$\Omega$, every set of full
$\eta$-measure is dense in $\Omega$. Moreover, any $q$-Hölder map from a
dense subset of $\Omega$ into the complete metric space
$(\mathcal M_1(X),W_1)$ admits a unique $q$-Hölder extension to all of
$\Omega$. In what follows, we identify such equivalence classes with
their unique $q$-Hölder representatives.

\begin{corollary}\label{th.holder}
The fixed point of $\mathcal K$ in $\mathbf P$ is a $q$-Hölder
continuous map from $\Omega$ to $\mathcal M_1(X)$.
\end{corollary}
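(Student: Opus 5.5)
The idea is to get the fixed point as the limit of iterates of a regular section. We start from a section with finite $q$-Hölder seminorm, use Proposition~\ref{le.qholder} to bound the seminorms of all its iterates uniformly, and pass this bound to the limit, which is the fixed point by Corollary~\ref{co.fixK}.

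\textbf{Step 1 (seed and uniform bound).} Fix any $\sigma\in\mathcal M_1(X)$ and let $\bm\mu$ be the constant section $\mu_\omega=\sigma$. Then $[\bm\mu]_q=0$, so $\bm\mu\in\mathbf P_q$. By Proposition~\ref{le.qholder}, for every $n\ge1$,
\[
[\mathcal K^n\bm\mu]_q\le \frac{C}{1-\tau}=:H .
\]
Some care is needed here. $\mathcal K^n\bm\mu$ is only defined $\eta$-a.e., and the proof of Proposition~\ref{le.qholder} uses the bound of Lemma~\ref{le.meanq} pointwise. So the seminorm should be read as a supremum over pairs $(\omega,\omega')$ in a full $\eta$-measure set $G_n$ on which all the relevant a.e.\ identities hold. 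Taking $G=\bigcap_n G_n$ and intersecting further with its preimages under the branches $\theta_i$ keeps full measure, because $\eta$ is equivalent to $m$ and the $\theta_i$ are $C^1$ inverse branches, so they map null sets to null sets.

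\textbf{Step 2 (passing to the limit).} By Corollary~\ref{co.fixK}, $d_{\mathbf P}(\mathcal K^n\bm\mu,\bm\nu)\to0$. Shrinking $G$ by a null set, we get $W_1\bigl((\mathcal K^n\bm\mu)_\omega,\nu_\omega\bigr)\to0$ uniformly for $\omega\in G$. For $\omega,\omega'\in G$, Lemma~\ref{le.w1wq} gives
\[
W_1\bigl((\mathcal K^n\bm\mu)_\omega,(\mathcal K^n\bm\mu)_{\omega'}\bigr)\le \diam(X)^{1-q}\,H\,d_\Omega(\omega,\omega')^q .
\]
Letting $n\to\infty$ and using the triangle inequality for $W_1$, we obtain
\[
W_1(\nu_\omega,\nu_{\omega'})\le \diam(X)^{1-q}H\,d_\Omega(\omega,\omega')^q
\]
for all $\omega,\omega'\in G$. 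If one also wants the bound in the $\widetilde W_q$ scale, the lower semicontinuity of $\widetilde W_q$ under weak convergence (via tightness of couplings on the compact space $X\times X$) gives $[\bm\nu]_q\le H$ on $G$ as well. This is not needed for the statement.

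\textbf{Step 3 (extension).} $G$ has full $\eta$-measure, hence full Lebesgue measure, hence is dense in $\Omega$. The space $(\mathcal M_1(X),W_1)$ is complete. So $\bm\nu|_G$ extends uniquely to a $q$-Hölder map on $\Omega$, as in the remark preceding the corollary. This extension is the representative of the fixed point, which is unique by Corollary~\ref{co.fixK}.

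The main obstacle is the a.e.\ bookkeeping in Step~1. Proposition~\ref{le.qholder} is stated with a pointwise supremum, while elements of $\mathbf P$ are only defined almost everywhere. The fix is to choose one common full-measure set that is invariant under the inverse branches $\theta_i$ and to run the seminorm estimates on it. Once this is done, the limit argument in Step~2 is routine.
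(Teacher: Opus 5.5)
Your proposal is correct and is essentially the paper's proof: start from a section in $\mathbf P_q$, bound $[\mathcal K^n\bm\mu]_q$ uniformly via Proposition~\ref{le.qholder}, convert to $W_1$ with Lemma~\ref{le.w1wq}, pass to the limit on a full-measure set by the triangle inequality, and extend from this dense set using completeness of $(\mathcal M_1(X),W_1)$. Your constant seed section and your common full-measure set stable under the branches $\theta_i$ make explicit what the paper leaves implicit. One small correction to that step: the null-set fact you need is that $\theta_i^{-1}(N)=f(N\cap\overline{\Omega}_i)$ is null whenever $N$ is null, and this holds because $f$ is $C^1$ on $\overline{\Omega}_i$.
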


\begin{proof}

Let $\bm\nu^*$ be the fixed point of $\mathcal K$ in $\mathbf P$. Given
$\bm\nu\in\mathbf P_q$, define
\[
\bm\nu^{(n)}=\mathcal K^n\bm\nu , \qquad n\ge0.
\]
By Proposition~\ref{pr.Kcontraction},
\[
d_{\mathbf P}(\bm\nu^{(n)},\bm\nu^*)\to0 .
\]
Hence there exists a set $E\subset\Omega$ with $\eta(\Omega\setminus E)=0$
such that for every $\omega\in E$
\[
W_1(\nu^{(n)}_\omega,\nu^*_\omega)\to0, \qquad\text{as $n\to\infty$}.
\]
By Proposition~\ref{le.qholder}, there exists $C=C(\bm\nu)>0$ such that
\[
[\bm\nu^{(n)}]_q\le C,
\qquad \text{for all }n\ge1.
\]
Using the second inequality of Lemma~\ref{le.w1wq}, we obtain for all
$\omega,\omega'\in\Omega$,
\begin{equation}\label{eq.wineq}
W_1\bigl(\nu^{(n)}_\omega,\nu^{(n)}_{\omega'}\bigr)
\le \diam(X)^{1-q}\widetilde W_q\bigl(\nu^{(n)}_\omega,\nu^{(n)}_{\omega'}\bigr)
\le \diam(X)^{1-q}C\, d_\Omega(\omega,\omega')^q .
\end{equation}
Fix $\omega,\omega'\in E$. By the triangle inequality,
\[
\begin{aligned}
W_1(\nu^*_\omega,\nu^*_{\omega'})
&\le
W_1(\nu^*_\omega,\nu^{(n)}_\omega)
+
W_1(\nu^{(n)}_\omega,\nu^{(n)}_{\omega'})
+
W_1(\nu^{(n)}_{\omega'},\nu^*_{\omega'}).
\end{aligned}
\]
Letting $n\to\infty$, the first and third terms vanish, and it follows that fro all $\omega,\omega'\in E$
\[
W_1(\nu^*_\omega,\nu^*_{\omega'})
\le \diam(X)^{1-q}C\, d_\Omega(\omega,\omega')^q.
\]
Thus $\bm\nu^*$ is represented by a $q$-Hölder map on $E$, and hence,  by a unique $q$-Hölder continuous map on~$\Omega$.
\end{proof}

\subsection{Physical Measures}

Let $(\Omega,\mathcal F,\mathbb P)$ be a probability space and let $X$ be a metric space. Let $\mathcal B$ denote the Borel $\sigma$-algebra of $X$, and let $\mu \in \mathcal M_1(\Omega \times X)$.
We say that $\{\nu_\omega\}_{\omega \in \Omega}$ is a \emph{factorisation} (or a \emph{disintegration}) of $\mu$ with respect to $\mathbb P$ if:
\begin{enumerate}
    \item for every $B \in \mathcal B$, the map $\omega \mapsto \nu_\omega(B)$ is $\mathcal F$-measurable;
    \item for $\mathbb P$-almost every $\omega \in \Omega$, $\nu_\omega$ is a probability measure on $(X,\mathcal B)$;
    \item for every $A \in \mathcal F \otimes \mathcal B$,
    \[
    \mu(A) = \int_\Omega \int_X \mathbf{1}_A(\omega,x)\, \nu_\omega(dx)\, \mathbb P(d\omega).
    \]
\end{enumerate}

It is well known that if $X$ is a Polish space, then a factorisation of $\mu$ exists and is $\mathbb P$-almost everywhere unique; see e.g.~\cite{GS77} or~\cite[Section 1.4]{A98}. After modification on a $\mathbb P$-null set if necessary, we may assume that $\omega \mapsto \nu_\omega$ defines a map from $\Omega$ into $\mathcal M_1(X)$. In the next result, we show that  
this
is indeed a Borel measurable map from $\Omega$ into $\mathcal M_1(X)$, considering  $\mathcal M_1(X)$  equipped with the Wasserstein-1 distance $W_1$.

\begin{lemma}\label{le.uniqueP}
Assume that $X$ is a compact separable metric space. If  $\bm\nu=\{\nu_\omega\}_{\omega \in \Omega}$ is a factorisation of $\mu$ with respect to $\mathbb P$, then $\bm\nu\in\mathbf P$.
\end{lemma}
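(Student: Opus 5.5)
To prove Lemma~\ref{le.uniqueP}, we must show that $\omega\mapsto\nu_\omega$ is Borel measurable as a map from $\Omega$ into $(\mathcal M_1(X),W_1)$. The plan is to reduce this to measurability of the scalar maps $\omega\mapsto\nu_\omega(B)$, which is part of the definition of a factorisation.

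\emph{Step 1: integrals of bounded Borel functions.} Starting from the definition, $\omega\mapsto\int \mathbf 1_B\,d\nu_\omega$ is $\mathcal F$-measurable for every $B\in\mathcal B$. By linearity this extends to simple Borel functions. By monotone (or bounded) convergence it then extends to all bounded Borel $\varphi:X\to\mathbb R$. In particular, $\omega\mapsto\int\varphi\,d\nu_\omega$ is measurable for every continuous $\varphi$.

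\emph{Step 2: the Borel $\sigma$-algebra of $\mathcal M_1(X)$.} Since $X$ is compact, $W_1$ metrizes the weak-$*$ topology on $\mathcal M_1(X)$, and $(\mathcal M_1(X),W_1)$ is compact, hence separable. Fix a countable set $\{\varphi_k\}$ dense in $C(X)$; compactness and separability of $X$ make $C(X)$ separable. The map
\[
\Phi:\mathcal M_1(X)\to\mathbb R^{\mathbb N},\qquad \Phi(\nu)=\left(\int\varphi_k\,d\nu\right)_k,
\]
is then a continuous injection from a compact space into a metrizable space. It is therefore a homeomorphism onto its image, so the Borel $\sigma$-algebra of $\mathcal M_1(X)$ is generated by the evaluation maps $\nu\mapsto\int\varphi_k\,d\nu$.

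As an alternative route, one can argue directly with $W_1$-balls. By Kantorovich--Rubinstein duality,
\[
W_1(\nu,\sigma)=\sup_{\Lip(\varphi)\le1}\left(\int\varphi\,d\nu-\int\varphi\,d\sigma\right),
\]
and the supremum may be restricted to a countable family that is $\|\cdot\|_\infty$-dense in the $1$-Lipschitz functions vanishing at a base point. This density holds by Arzelà--Ascoli. Consequently $\omega\mapsto W_1(\nu_\omega,\sigma)$ is a countable supremum of measurable functions, hence measurable. Preimages of open balls are then measurable, and since the space is separable, every open set is a countable union of such balls.

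\emph{Step 3: conclusion.} Combining Steps 1 and 2, preimages under $\bm\nu$ of the generators of the Borel $\sigma$-algebra are measurable, so $\bm\nu$ is Borel measurable. The values are probability measures for $\mathbb P$-a.e.\ $\omega$, and after the null-set modification mentioned before the lemma they are probability measures for every $\omega$. Hence $\bm\nu\in\mathbf P$.

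The main obstacle is Step 2, namely identifying the Borel structure of $(\mathcal M_1(X),W_1)$ with the structure generated by countably many integrals. This needs two inputs: that $W_1$ metrizes the weak-$*$ topology on a compact space, and the countable reduction in the duality formula. I would rely on the Kantorovich--Rubinstein route as the cleanest, with the Arzelà--Ascoli countability argument as the key technical point.
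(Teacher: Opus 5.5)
Your proof is correct and follows the paper's route. You first get measurability of $\omega\mapsto\int\varphi\,d\nu_\omega$ for continuous $\varphi$, then use that the Borel $\sigma$-algebra of $(\mathcal M_1(X),W_1)$ is generated by these evaluation functionals. Your Step~2 (either the embedding $\nu\mapsto(\int\varphi_k\,d\nu)_k$ into $\mathbb R^{\mathbb N}$ or the countable Kantorovich--Rubinstein reduction) justifies that generation claim, which the paper only asserts. One small point: the null-set modification belongs before Step~1, so that $\int\varphi\,d\nu_\omega$ is defined for every $\omega$.
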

\begin{proof}
By assumption, for every $B \in \mathcal B$, the map
$
\omega \mapsto \nu_\omega(B)
$
is measurable. 
By standard measure theoretical arguments, we easily deduce that for every $\varphi \in C(X)$,
\[
\omega \mapsto \int_X \varphi\, d\mu_\omega
\]
is measurable. Now, recall that since $X$ is compact separable metric,   the Wasserstein-$1$ distance $W_1$ metrises the weak$^*$ topology; see e.g. \cite[Theorem 6.9]{V09}. Hence the corresponding Borel $\sigma$-algebras coincide.
The weak$^*$  topology on $\mathcal M_1(X)$ is the initial topology generated by the maps
\[
\mu \mapsto \int_X \varphi\, d\mu, \qquad \varphi \in C(X).
\]
Therefore, since all compositions
\[
\omega \mapsto \int_X \varphi\, d\nu_\omega
\]
are measurable and  $\mathcal{M}_1(X)$ is a compact separable metric whose Borel $\sigma$-algebra is generated by these linear functionals, it follows that the map
$
\omega \mapsto \mu_\omega
$
is measurable as a map into $\mathcal M_1(X)$ endowed with the weak topology, and hence also measurable with respect to the Wasserstein-$1$ distance $W_1$.
\end{proof}

\begin{lemma} \label{le.fixedeq}
Let $\mu$ be a $T$-invariant probability measure on $\Omega \times X$,   let
$ \{\nu_\omega\}_{\omega \in \Omega}$ be the factorisation of $\mu$  with respect to $\eta = \pi_* \mu \ll m$ and 
$\rho = \frac{d\eta}{dm}$. Then, for $\eta$-almost every $\omega \in \Omega$,
\[
\nu_{\omega}
=
\sum_{\theta \in f^{-1}(\{\omega\})}
\frac{\rho(\theta)}{\rho(\omega)\, |\det Df(\theta)|}
\,(g_{\theta})_* \nu_{\theta}.
\]
\end{lemma}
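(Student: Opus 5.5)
The identity will be obtained by testing $T$-invariance of $\mu$ against product observables and then changing variables along the inverse branches $\theta_i$. Fix $\phi\in L^\infty(\Omega,m)$ and $\varphi\in C(X)$, and consider the observable $\Phi(\omega,x)=\phi(\omega)\varphi(x)$.

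First, $T$-invariance gives $\int \Phi\circ T\,d\mu=\int\Phi\,d\mu$. The right-hand side equals
\[
\int_\Omega \phi(\omega)\Big(\int_X\varphi\,d\nu_\omega\Big)\rho(\omega)\,dm(\omega).
\]
For the left-hand side we use the factorisation property of $\{\nu_\omega\}$, which gives
\[
\int \Phi\circ T\,d\mu=\int_\Omega \phi(f(\theta))\Big(\int_X \varphi\circ g_\theta\,d\nu_\theta\Big)\rho(\theta)\,dm(\theta).
\]

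Next we split $\Omega$ into the sets $\Omega_i$ from \ref{as.base1}; the complement is $m$-null, so it does not contribute. On each $\Omega_i$ we substitute $\theta=\theta_i(\omega)$, using the $C^1$ bijection $f|_{\overline\Omega_i}\to\Omega$ and its Jacobian, so that $dm(\theta)=|\det Df(\theta_i(\omega))|^{-1}dm(\omega)$. Summing over $i$ and writing $\rho(\theta_i\omega)=\rho(\omega)\,p_i(\omega)\,|\det Df(\theta_i\omega)|$, the left-hand side becomes
\[
\int_\Omega\phi(\omega)\sum_{i}p_i(\omega)\Big(\int\varphi\, d(g_{\theta_i(\omega)})_*\nu_{\theta_i(\omega)}\Big)\rho(\omega)\,dm(\omega).
\]
The expression $\sum_\theta$ in the statement is precisely this sum over the branches, since $f^{-1}(\{\omega\})=\{\theta_i(\omega)\}$ for $m$-a.e.\ $\omega$, up to boundary points, which form a null set.

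Since $\phi$ is an arbitrary bounded function, the two integrands multiplied by $\rho$ agree $m$-almost everywhere. Because $\rho>0$ $\eta$-almost everywhere, for each fixed $\varphi$ we obtain $\int\varphi\,d\nu_\omega=\int\varphi\,d(\mathcal K\bm\nu)_\omega$ for $\eta$-a.e.\ $\omega$.

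The main subtlety is making the exceptional null set independent of $\varphi$. We handle this by taking a countable dense subset $\{\varphi_k\}\subset C(X)$, which exists because $X$ is compact metric, and discarding the countable union of the corresponding null sets. On the remaining full-measure set, two probability measures that agree on a dense subset of $C(X)$ are equal, which gives the identity for $\eta$-a.e.\ $\omega$.

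The measurability of $\omega\mapsto\int\varphi\circ g_\theta\,d\nu_\theta$ needed to apply Fubini follows as in Lemma~\ref{le.uniqueP}.
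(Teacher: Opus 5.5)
Your proof is correct and follows essentially the same route as the paper: you test $T$-invariance of $\mu$ against product objects, disintegrate, change variables along the inverse branches (the paper's Perron--Frobenius step), and compare the two sides. The differences are minor. You use product observables $\phi\otimes\varphi$ and a countable dense subset of $C(X)$ to make the exceptional null set independent of $\varphi$, whereas the paper uses product sets $A\times B$ and appeals to uniqueness of disintegration for that step; your version makes this step more explicit.
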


\begin{proof}
Let $A \subset \Omega$ and $B \subset X$ be measurable sets. Since $\mu$ is $T$-invariant, we have
\[
\mu(A \times B)
=
\mu\bigl(T^{-1}(A \times B)\bigr).
\]
We compute the right-hand side. By definition of $T$,
\[
T^{-1}(A \times B)
=
\{(\omega,x) \in \Omega \times X :
f(\omega) \in A,\; g_{\omega}(x) \in B \}.
\]
Using the disintegration of $\mu$, we obtain
\[
\mu\bigl(T^{-1}(A \times B)\bigr)
=
\int_\Omega
\nu_{\omega}\bigl(g_{\omega}^{-1}(B)\bigr)
\, \mathbf{1}_A(f(\omega)) \, \rho(\omega)\, dm(\omega).
\]
We now perform a change of variables using the map $f : \Omega \to \Omega$. Since $\eta \ll m$ and $\rho=d\eta/dm$, the Perron--Frobenius formula gives
\[
\int_\Omega \varphi(\omega)\, d\eta(\omega)
=
\int_\Omega \sum_{\theta \in f^{-1}(\{\omega\})}
\frac{\rho(\theta)}{|f'(\theta)|}\,
\varphi(\omega)\, dm(\omega).
\]
Applying this identity to the integrand above yields
\[
\mu(A \times B)
=
\int_A
\sum_{\theta \in f^{-1}(\{\omega\})}
\frac{\rho(\theta)}{|f'(\theta)|}
\, \nu_{\theta}\bigl(g_{\theta}^{-1}(B)\bigr)
\, dm(\omega).
\]
On the other hand,
\[
\mu(A \times B)
=
\int_A \nu_{\omega}(B)\, \rho(\omega)\, dm(\omega).
\]
By uniqueness of disintegration, we deduce that for $m$-a.e. $\omega$,
\[
\nu_{\omega}(B)
=
\sum_{\theta \in f^{-1}(\{\omega\})}
\frac{\rho(\theta)}{\rho(\omega)\, |f'(\theta)|}
\, \nu_{\theta}\bigl(g_{\theta}^{-1}(B)\bigr),
\]
which completes the proof.
\end{proof}

 Let $D_T$ denote the set of discontinuity points of the skew-product map $T$. The proof of the next lemma can be found in \cite[Lemma 2.4]{ZZ11}.

\begin{lemma}\label{le.zz}
  Let $\mu_n, \mu_0 \in \mathcal{M}_1(\Omega\times X)$ be such that $\mu_n \to \mu_0$ in the weak$^*$ topology  and
\begin{equation*}
    \mu_0(D_T) = 0.  
\end{equation*}
Then $T_* \mu_n \to T_* \mu_0$ in in the weak$^*$ topology.
\end{lemma}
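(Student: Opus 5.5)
This is the continuity lemma for pushforwards under a map with discontinuities, quoted from \cite[Lemma 2.4]{ZZ11}. I would prove it as a special case of the continuous mapping theorem in the portmanteau framework. The aim is to show $\int \Phi\circ T\,d\mu_n\to\int\Phi\circ T\,d\mu_0$ for every continuous $\Phi$ on $\Omega\times X$. The difficulty is that $\Phi\circ T$ is bounded but only continuous off $D_T$. So the claim reduces to a general fact: if $\mu_n\to\mu_0$ weakly$^*$ and $h$ is a bounded Borel function whose discontinuity set $D_h$ satisfies $\mu_0(D_h)=0$, then $\int h\,d\mu_n\to\int h\,d\mu_0$. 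We apply this with $h=\Phi\circ T$, noting that $D_h\subset D_T$.

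To prove the general fact, I would first handle indicators and then approximate. Take a Borel set $A$ with $\mu_0(\partial A)=0$. The portmanteau theorem on the compact metric space $\Omega\times X$ gives
\[
\limsup\mu_n(\overline A)\le\mu_0(\overline A) \qquad\text{and}\qquad \liminf\mu_n(A^\circ)\ge\mu_0(A^\circ).
\]
Since $\mu_0(\partial A)=0$, these two bounds together yield $\mu_n(A)\to\mu_0(A)$.

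For bounded $h$ with $\mu_0(D_h)=0$, I would show that $\mu_0\bigl(\partial\{h>t\}\bigr)=0$ for all but countably many $t$. The reason is that $\partial\{h>t\}\subset D_h\cup\{h=t\}$, and the sets $\{h=t\}$ are pairwise disjoint, so only countably many of them can carry positive $\mu_0$-mass. Then I would write
\[
\int h\,d\mu=\int_{-\|h\|_\infty}^{\|h\|_\infty}\mu(h>t)\,dt-\|h\|_\infty .
\]
The step for indicators gives $\mu_n(h>t)\to\mu_0(h>t)$ for almost every $t$. Dominated convergence in $t$ then finishes the argument.

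As an alternative route, I could use upper and lower semicontinuous envelopes $h^*\ge h\ge h_*$, which agree with $h$ off $D_h$. Approximating these monotonically by continuous functions gives
\[
\limsup\int h\,d\mu_n\le\int h^*\,d\mu_0=\int h\,d\mu_0,
\]
and the symmetric bound for the $\liminf$.

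The only delicate point is the boundary inclusion $\partial\{h>t\}\subset D_h\cup\{h=t\}$. Its proof is short: at a continuity point $z$ with $h(z)\ne t$, the function $h$ stays on the same side of $t$ in a neighbourhood of $z$, so $z$ is an interior or exterior point of $\{h>t\}$. A second thing to check is measurability of $D_T$. The discontinuity set of any map between metric spaces is an $F_\sigma$ set, so $\mu_0(D_T)$ is well defined and no further work is needed.
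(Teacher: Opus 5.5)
Your proof is correct. The paper does not prove this lemma at all: it quotes Lemma~2.4 of [ZZ11]. So your argument is a self-contained substitute for a citation, not a rival proof.

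What you have written is the standard continuous-mapping (Mann--Wald) theorem. Testing against a continuous $\Phi$, the integrand $h=\Phi\circ T$ is bounded and Borel, and its discontinuity set lies inside $D_T$. Both of your routes to $\int h\,d\mu_n\to\int h\,d\mu_0$ work:
\begin{itemize}
\item The layer-cake route works because $\mu_0(\partial\{h>t\})\le\mu_0(D_h)+\mu_0(\{h=t\})$, and the right-hand side vanishes for all but countably many $t$; dominated convergence in $t$ then finishes.
\item The envelope route works because $h_{*}\le h\le h^{*}$ agree $\mu_0$-almost everywhere. Each envelope is a monotone limit of continuous (even Lipschitz) functions, which gives the $\limsup$ and $\liminf$ bounds.
\end{itemize}

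What your version buys over the bare citation is transparency about what is actually used. Nothing from the skew-product structure is needed. The only inputs are that $T$ is Borel and that $\mu_0(D_T)=0$, with $D_T$ an $F_\sigma$ set, as you note. Borel measurability is implicit in the statement, since $T_*\mu_n$ must be defined, and it is what makes $\{h>t\}$ measurable.

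The paper also uses the lemma only in this generality, in the basin argument with $\mu_n=\varepsilon_{n_k}(z)$ and $\mu_0=\nu$. The citation keeps the paper short. Your argument shows the lemma is simply a general fact about weak$^*$ convergence on metric spaces.
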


Recall that $\eta$ is the $f$-invariant probability measure  given by \ref{as.base2}.

\begin{proposition}
There is a  unique $T$-invariant measure $\mu$ such that $\pi_* \mu = \eta$.
Moreover, 
if $B(\eta)$ is the basin of $\eta$ for the map $f$ and $B(\mu)$ is the basin of   $\mu$ for the skew-product map $T$, then
\[
B(\mu) = B(\eta) \times X.
\]
\end{proposition}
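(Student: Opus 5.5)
Existence and uniqueness come directly from the sectional transfer operator. Let $\bm\nu$ be the unique fixed point of $\mathcal K$ from Corollary~\ref{co.fixK}. Define $\mu=\int \nu_\omega\,d\eta(\omega)$, that is, $\mu(A)=\int\int \mathbf 1_A(\omega,x)\,d\nu_\omega(x)\,d\eta(\omega)$. By \cite[Corollary 2.5]{AB26}, the fixed point property makes $\mu$ $T$-invariant; one can also check this directly by reversing the Perron--Frobenius computation in Lemma~\ref{le.fixedeq}. Clearly $\pi_*\mu=\eta$.

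For uniqueness, let $\mu'$ be $T$-invariant with $\pi_*\mu'=\eta$. Its disintegration lies in $\mathbf P$ by Lemma~\ref{le.uniqueP}. It is a fixed point of $\mathcal K$ by Lemma~\ref{le.fixedeq}, since the weights there are exactly the $p_i$ of \eqref{eq.weight}. Uniqueness in Corollary~\ref{co.fixK} then gives $\mu'=\mu$.

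For the basins, the inclusion $B(\mu)\subset B(\eta)\times X$ is easy. If $(\omega,x)\in B(\mu)$, project the empirical measures by the continuous map $\pi$. Since $\pi\circ T^j=f^j\circ\pi$, the empirical measures of $\omega$ under $f$ converge to $\pi_*\mu=\eta$.

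For the converse, fix $\omega\in B(\eta)$ and $x\in X$, and set $e_n=\frac1n\sum_{j<n}\delta_{T^j(\omega,x)}$. By compactness of $\Omega\times X$, it suffices to show that every weak$^*$ accumulation point $\mu_0$ of $(e_n)$ equals $\mu$. Every such $\mu_0$ has $\pi_*\mu_0=\eta$, because projections of $e_n$ are the empirical measures of $\omega$.

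Next, $\mu_0$ is $T$-invariant. Indeed $T_*e_n-e_n=\frac1n(\delta_{T^n(\omega,x)}-\delta_{(\omega,x)})\to0$, so we only need $T_*e_{n_k}\to T_*\mu_0$. This follows from Lemma~\ref{le.zz} once $\mu_0(D_T)=0$. Under the standing assumptions, $g$ is continuous in $x$ and Lipschitz in $\omega$ along each branch by \ref{as.holder2}, so $D_T\subset(\Omega\setminus\cup_i\Omega_i)\times X$. This set has $\mu_0$-measure $\eta(\Omega\setminus\cup\Omega_i)=0$, since $\eta\sim m$ and \ref{as.base1} holds.

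This step is the main obstacle: one must make sure continuity of $g$ on each $\Omega_i\times X$ really follows from \ref{as.fibre} and \ref{as.holder2}. I would argue via $g(\theta_i(\omega),x)$, which is jointly continuous in $(\omega,x)$ because it is Lipschitz in $\omega$ uniformly in $x$ and Lipschitz in $x$. Composing with $f|_{\Omega_i}$ gives continuity on $\Omega_i\times X$.

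Once $\mu_0$ is known to be invariant with marginal $\eta$, the uniqueness proved above gives $\mu_0=\mu$. Hence $e_n\to\mu$, so $(\omega,x)\in B(\mu)$, and therefore $B(\mu)=B(\eta)\times X$.
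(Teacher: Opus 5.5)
Your proposal is correct and follows essentially the same route as the paper. Existence comes from the fixed point of $\mathcal K$. Uniqueness comes from Lemmas~\ref{le.uniqueP} and~\ref{le.fixedeq} together with uniqueness of that fixed point. The basin equality is proved by projecting empirical measures, then showing via Lemma~\ref{le.zz} that every weak$^*$ accumulation point is $T$-invariant with marginal $\eta$. Your justification of $\mu_0(D_T)=0$ uses the inclusion $D_T\subset(\Omega\setminus\bigcup_i\Omega_i)\times X$, with continuity of $g$ on each $\Omega_i\times X$ obtained from \ref{as.fibre} and \ref{as.holder2}. This is more careful than the paper's unproved assertion $D_T=\pi^{-1}(D_f)$.
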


\begin{proof}
First we prove the existence.
By Corollary~\ref{co.fixK}, the operator $\mathcal K$ admits a fixed point
$\{\nu_\omega\}_{\omega\in\Omega}$ in $\mathbf P$. Define the measure
\[
\mu = \int_\Omega \nu_\omega \, d\eta(\omega).
\]
By~\cite[Theorem A]{AB26}, this measure   is $T$-invariant and, clearly, satisfies $\pi_*\mu=\eta$.

Now we prove uniqueness.
Let $\mu$ be a $T$-invariant probability measure with $\pi_*\mu=\eta$. By \cite[Proposition 1.4.3]{A98}, there exists an $\eta$-almost everywhere unique disintegration $\{\nu_\omega\}_{\omega\in\Omega}$ of $\mu$.
By Lemma~\ref{le.uniqueP}, the family $\bm{\nu}=\{\nu_\omega\}_{\omega\in\Omega}$ belongs to $\mathbf P$ and by Lemma~\ref{le.fixedeq} it satisfies $\mathcal K\bm{\nu}=\bm{\nu}$. 
Since Corollary~\ref{co.fixK} gives that the fixed point of $\mathcal K$ in $\mathbf P$ is unique, we are done.

Now we prove the equality involving the two basins. First we show that $B(\mu)\subset  B(\eta)\times X$.

Let $z=(\omega,x)\in \Omega\times X$, and define for $n\ge1$  the empirical measures
\[
\varepsilon_n(z)=\frac{1}{n}\sum_{k=0}^{n-1}\delta_{T^k(z)}\quad\text{and}\quad\bar\varepsilon_n(z)=\frac{1}{n}\sum_{k=0}^{n-1}\delta_{f^k(\omega)}.
\]
Using the semiconjugacy relation $\pi\circ T=f\circ \pi$, where $\pi$ is the canonical projection onto $\Omega$, we obtain
 \begin{equation}\label{eq.empirical}
\pi_*\varepsilon_n(z)
=
\frac{1}{n}\sum_{k=0}^{n-1}\pi_*\delta_{ T^k(\omega,x)}
=
\frac{1}{n}\sum_{k=0}^{n-1}\delta_{\pi(T^k(\omega,x))}
=
\frac{1}{n}\sum_{k=0}^{n-1}\delta_{f^k(\omega)}
=
\bar\varepsilon_n(\omega).
\end{equation}
To prove the forward inclusion $B(\mu)\subset B(\eta)\times X$, assume  $z=(\omega,x)\in B(\mu)$. Then $\varepsilon_n(z)\to\mu$ in the weak$^*$ topology.
Since the push-forward map $\pi_*:  \mathcal M_1(\Omega\times X)\to \mathcal M_1(\Omega)$ is continuous in the weak$^*$ topology, we obtain
\[
\pi_*\varepsilon_n(z)\xrightarrow[n\to\infty]{} \pi_*\mu=\eta.
\]
By \eqref{eq.empirical}, this implies $\varepsilon_n(\omega)\to \eta$, hence $\omega\in B(\eta)$.
Therefore $z\in B(\eta)\times X$.

Finally, we prove the backward inclusion $B(\eta)\times X\subset B(\mu)$.
Let $(\omega,x)\in B(\eta)\times X$. Then
 \begin{equation}\label{eq.empeta}
\bar\varepsilon_n(\omega)\to \eta.
\end{equation}
Since $\mathcal M_1(\Omega\times X)$ is compact in the weak$^*$ topology, the sequence $(\varepsilon_n(z))_{n }$ admits at least one convergent subsequence  $(\varepsilon_{n_k}(z))$: for some $\nu\in \mathcal M_1(\Omega\times X)$, 
 \begin{equation}\label{eq.comver}
\varepsilon_{n_k}(z)\xrightarrow{w^*} \nu.
\end{equation}
Since $T$ is not necessarily continuous, the $T$-invariance of $\nu$ is not immediate. However,
observe that
\[
T_*\varepsilon_n(z)
=
\frac{1}{n}\sum_{k=0}^{n-1}\delta_{T^{k+1}(z)}
=
\varepsilon_n(z)
-\frac{1}{n}\delta_z
+\frac{1}{n}\delta_{T^n(z)}.
\]
Hence, for every $\varphi\in C(\Omega\times X)$,
\[
\int \varphi\,d(T_*\varepsilon_n(z))-\int \varphi\,d\varepsilon_n(z)
=
\frac{1}{n}\big(\varphi(T^n z)-\varphi(z)\big)\longrightarrow 0.
\]
This implies
 \begin{equation}\label{eq.twoseq}
T_*\varepsilon_n(z)-\varepsilon_n(z)\xrightarrow{w^*}0.
\end{equation}
We now pass to the subsequence $n_k$. We claim that
 \begin{equation}\label{eq.claimco}
T_*\varepsilon_{n_k}(z)\xrightarrow{w^*} T_*\nu.
\end{equation}
To justify this, we verify the hypothesis of Lemma~\ref{le.zz}. First, note that by~\eqref{eq.empirical}  and~\eqref{eq.empeta}
\[
\pi_*\varepsilon_{n_k}(z)=\varepsilon_{n_k}(\omega)\xrightarrow{w^*} \eta.
\]
It follows from the continuity of $\pi_*$ and \eqref{eq.comver} that
\[
\pi_*\nu=\eta.
\]
Moreover, since $ D_T=\pi^{-1}(D_f)$ and  $m( D_f)=0$ by assumption, using $\eta\ll m$, we obtain
\[
\nu(D_T)= \nu(\pi^{-1}(D_f))=\pi_*\nu(D_f)=\eta(D_f)=0.
\]
Therefore, Lemma~\ref{le.zz} applies and yields~\eqref{eq.claimco}.
Combining it with \eqref{eq.twoseq}, we conclude that
$
T_*\nu=\nu,
$
so $\nu$ is $T$-invariant.
Moreover, since $\pi_*\nu=\eta$ and $\mu$ is the unique $T$-invariant measure $\mu$ such that $\pi_* \mu = \eta$, we conclude that
\[
\nu=\mu.
\]
Thus every convergent subsequence of $\varepsilon_n(z)$ converges to $\mu$, and therefore the full sequence converges to $\mu$.
This proves $z\in B(\mu)$.
\end{proof}

\section{Decay of Correlations}\label{se.DC}
Here we prove Theorem~\ref{th.DC}. Let $\bm\nu$ be the unique fixed point of
$\mathcal K$ in $\mathbf P$, and let
\[
\mu
=
\int_\Omega \nu_\omega\, d\eta(\omega).
\]

Let  $\Phi,\Psi:\Omega\times X\to \mathbb R$ be Lipschitz continuous functions. 
With no loss of generality we assume
\[
 \Phi\geq 0
\qand
\int \Phi\, d\mu =1 .
\]
Define the probability measures 
$$\mu_\Phi=\Phi\mu\qand  \eta_\Phi 
=
\pi_*\mu_\Phi 
.$$  Set for each $k\ge1$
 \begin{equation}\label{eq.pushm}
\mu_\Phi^k
=
T_*^k(\mu_\Phi)
\qand
\eta_\Phi^k
=
\pi_*\mu_\Phi^k,
\end{equation}
Consider the disintegration of $\mu_\Phi^k$ with marginal $\eta_\Phi^k$ in the base
\begin{equation}\label{eq.samplek}
\mu_\Phi^k
=
\int \nu_{\Phi,\omega}^k\, d\eta_\Phi^k(\omega),
\end{equation}
for some measurable family of sample measures $\{\nu_{\Phi,\omega}^k\}_{\omega\in\Omega}$.
Hence,
\begin{align*}
\mathrm{Corr}_\mu(\Phi,\Psi\circ T^n)
&=
\int_{\Omega\times X} \Phi \cdot( \Psi\circ T^n) d\mu
-
\int_{\Omega\times X} \Psi\, d\mu
\\
&=
\int_{\Omega\times X}
\Psi\circ T^{n-k}\,
d\mu_\Phi^k
-
\int_{\Omega\times X}
\Psi\, d\mu
\\
&=
\int_\Omega
\int_X
\Psi\circ T^{n-k}\,
d\nu_{\Phi,\omega}^k\,
d\eta_\Phi^k(\omega)
-
\int_\Omega
\int_X
\Psi\, d\nu_\omega\, d\eta(\omega)
\\
&=
\underbrace{\int_\Omega
\int_X
\Psi\circ T^{n-k}\,
d\nu_{\Phi,\omega}^k\,
d(\eta_\Phi^k-\eta)(\omega)}_{\text{Base Term}}
\\
&\qquad
+
\underbrace{\left(
\int_\Omega
\int_X
\Psi\circ T^{n-k}\,
d\nu_{\Phi,\omega}^k\,
d\eta(\omega)
-
\int_\Omega
\int_X
\Psi\, d\nu_\omega\, d\eta(\omega)
\right)}_{\text{Fibre Term}}
.
\end{align*}
Next, we estimate the fibre term and the base term separately to yield the conclusion of Theorem~\ref{th.DC}. Fix
$\kappa\in(0,1)$ and choose an integer   
\[
k=k(n)\sim \kappa n.
\]
Note that as $d$ is the distance induced by the product Riemannian metric on $\Omega\times X$, it follows that for all $\omega,\omega'\in\Omega$ and $x,x'\in X$,
\begin{equation*}\label{eq.distance}
d\bigl((\omega,x),(\omega',x)\bigr)
=
d_\Omega(\omega,\omega')
\qand
d\bigl((\omega,x),(\omega,x')\bigr)
=
d_X(x,x').
\end{equation*}
In particular, if $\Phi:\Omega\times X\to\mathbb R$ is Lipschitz, then for every $\omega\in\Omega$ and $x\in X$, the sections
\[
\Phi(\omega,\cdot):X\to\mathbb R
\qquad\text{and}\qquad
\Phi(\cdot,x):\Omega\to\mathbb R
\]
are Lipschitz, and their Lipschitz constants are bounded above by the Lipschitz constant of~$\Phi$.
\subsection{Fibre Term} 
Consider the family of sample measures for $\mu_\Phi^k$ given in~\eqref{eq.samplek} 
\[
\bm\nu_\Phi^k
=
\left\{\nu_{\Phi,\omega}^k\right\}_{\omega\in\Omega}.
\]
By the duality in~\cite[Lemma~2.4]{AB26},  
\begin{align*}
\int_\Omega
\int_X
\Psi\circ T^{n-k}\,
d\nu_{\Phi,\omega}^k\,
d\eta(\omega)
=
\int_\Omega
\int_X
\Psi\,
d\!\left(
\mathcal K^{\,n-k}\bm\nu_{\Phi}^k
\right)_\omega
\, d\eta(\omega),
\end{align*}
and so the fibre term becomes
\begin{align}
\bigg|\int_\Omega
\int_X
\Psi\,
d\!\left(
\mathcal K^{\,n-k}\bm\nu_{\Phi}^k
\right)_\omega
\, d\eta(\omega)
&-
\int_\Omega
\int_X
\Psi\, d\nu_\omega\, d\eta(\omega)
\bigg|\nonumber\\
&
\le  
\int_\Omega \left|
\int_X
{\Psi}\,
d(\mathcal K^{\,n-k}\bm\nu_{\Phi}^k)_\omega \,
-
\int_X
{\Psi} \,
d\nu_\omega \right|
d\eta(\omega)\nonumber\\
&
\le
\lip(\Psi) \int_\Omega
W_1\Big(
(\mathcal K^{\,n-k}\bm\nu_{\Phi}^k)_\omega,
\nu_\omega
\Big)d\eta(\omega).
\label{fiterm}
\end{align}
Since $\bm\nu$ is the fixed point of $\mathcal K$, by Proposition~\ref{pr.Kcontraction}
we have for $\eta$-almost every $\omega\in\Omega$
\[
W_1\Big(
(\mathcal K^{\,n-k}\bm\nu_{\Phi}^k)_\omega,
\nu_\omega
\Big) 
\le 
d_{\mathbf P}(\mathcal K^{\,n-k}\bm\nu_{\Phi}^k,\mathcal K^{\,n-k}\bm\nu)
\le
r^{n-k}
d_{\mathbf P}(\bm\nu_{\Phi}^k,\bm\nu)^q \le r^{n-k} \diam(X)^q .
\]

Therefore, by \eqref{fiterm}, 
\[
|\text{Fibre Term}|\le \diam(X)^q
r^{n-k}\lip(\Psi)
=
\diam(X)^{q}
r^{(1-\kappa)n}\lip(\Psi)  ,
\]
which motivates the choice
$
s=r^{1-\kappa}.
$
 
  \subsection{Base Term}
  To study the base term we first show that  $\eta_\Phi\ll \eta$ and that the density of  $\eta_\Phi$ with respect to $\eta$ is Hölder continuous.
\begin{lemma}\label{le:marginal}
The probability measure   $\eta_\Phi$ is absolutely continuous with respect to $\eta$, and its density is given by 
\[
h_\Phi(\omega)
=
\int_X \Phi(\omega,x)\, d\nu_\omega(x),
\]
Moreover,   $h_\Phi$ is $q$-Hölder continuous and there exists   $C>0$ (independent of $\Phi$) such that
\[
[h_\Phi]_q
\le
C \Lip (\Phi) .
\]
\end{lemma}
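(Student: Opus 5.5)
To prove the first part, I would compute the marginal directly from the disintegration $\mu=\int_\Omega \nu_\omega\,d\eta(\omega)$. For a Borel set $A\subset\Omega$,
\[
\eta_\Phi(A)=\mu_\Phi(A\times X)=\int_{A\times X}\Phi\,d\mu=\int_A\Bigl(\int_X \Phi(\omega,x)\,d\nu_\omega(x)\Bigr)d\eta(\omega).
\]
This already shows $\eta_\Phi\ll\eta$ with density $h_\Phi(\omega)=\int_X\Phi(\omega,\cdot)\,d\nu_\omega$. Measurability of $h_\Phi$ follows from the measurability of $\bm\nu$, exactly as in Lemma~\ref{le.uniqueP}. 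Since $\Phi\ge0$ and $\int\Phi\,d\mu=1$, the density is nonnegative and integrates to one.

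For the H\"older estimate, I would work with the $q$-H\"older representative of $\bm\nu$ from Corollary~\ref{th.holder}. Fix $\omega,\omega'\in\Omega$ and split the difference:
\[
|h_\Phi(\omega)-h_\Phi(\omega')|\le \Bigl|\int_X \bigl(\Phi(\omega,\cdot)-\Phi(\omega',\cdot)\bigr)\,d\nu_\omega\Bigr| + \Bigl|\int_X \Phi(\omega',\cdot)\,d\nu_\omega-\int_X\Phi(\omega',\cdot)\,d\nu_{\omega'}\Bigr|.
\]
For the first term I use the remark that the sections $\Phi(\cdot,x)$ are Lipschitz with constant at most $\Lip(\Phi)$. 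This bounds the term by $\Lip(\Phi)\,d_\Omega(\omega,\omega')$, which in turn is at most $\Lip(\Phi)\diam(\Omega)^{1-q}d_\Omega(\omega,\omega')^q$. For the second term, $\Phi(\omega',\cdot)$ is $\Lip(\Phi)$-Lipschitz on $X$. Kantorovich–Rubinstein duality (equivalently, integrating $|\Phi(\omega',x)-\Phi(\omega',y)|$ against an optimal coupling) then gives the bound $\Lip(\Phi)\,W_1(\nu_\omega,\nu_{\omega'})$. By Corollary~\ref{th.holder} and Lemma~\ref{le.w1wq}, this is at most $\Lip(\Phi)\,\diam(X)^{1-q}[\bm\nu]_q\,d_\Omega(\omega,\omega')^q$.

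Combining the two terms yields $[h_\Phi]_q\le C\Lip(\Phi)$ with
\[
C=\diam(\Omega)^{1-q}+\diam(X)^{1-q}[\bm\nu]_q .
\]
This $C$ depends only on the fixed point $\bm\nu$ and the geometry, not on $\Phi$.

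The main subtlety is showing that $[\bm\nu]_q$ (in the $\widetilde W_q$ sense) is actually finite for the fixed point itself. Corollary~\ref{th.holder} as written only yields a $W_1$-H\"older constant $\diam(X)^{1-q}C(\bm\nu^{(0)})$, obtained from any starting section in $\mathbf P_q$. That constant is all that is needed here, so I would start the iteration from a constant section; it has $[\cdot]_q=0$, and Proposition~\ref{le.qholder} gives the uniform bound $C/(1-\tau)$. The resulting $W_1$-H\"older constant $\diam(X)^{1-q}C/(1-\tau)$ is then manifestly independent of $\Phi$.

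A second minor point is that $h_\Phi$ is defined only $\eta$-a.e. as a density. The H\"older bound holds on the full-measure set where the H\"older representative agrees with $\bm\nu$, so it extends to a unique H\"older function on $\Omega$, using the density of full $\eta$-measure sets noted before Corollary~\ref{th.holder}.
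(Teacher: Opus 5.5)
Your proposal is correct and follows the paper's proof. The density comes from the same disintegration computation, and the Hölder bound comes from the same two-term split: the Lipschitz bound in the base variable, and Kantorovich duality combined with the $W_1$-Hölder continuity of $\bm\nu$ from Corollary~\ref{th.holder} in the fibre. Your extra remarks only make explicit what the paper leaves implicit, namely converting $d_\Omega$ to $d_\Omega^q$ via $\diam(\Omega)^{1-q}$ and noting that the Hölder constant of $\bm\nu$ is independent of $\Phi$ if you start the iteration from a constant section.
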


\begin{proof}
Let $A\subset\Omega$ be a Borel measurable set. Since $\eta_\Phi=\pi_*\mu_\Phi$,
\[
\eta_\Phi(A)
=
\mu_\Phi(A\times X).
\]
Using the definition of $\mu_\Phi$ and disintegrating $\mu$ along the fibres gives
\[
\eta_\Phi(A)
=
\int_{A\times X}\Phi(\omega,x)\, d\mu(\omega,x)
=
\int_A
\left(
\int_X \Phi(\omega,x)\, d\nu_\omega(x)
\right)
d\eta(\omega).
\]
This shows that 
$\eta_\Phi\ll\eta$ and
\[
\frac{d\eta_\Phi}{d\eta}=h_\Phi.
\]
%
We now prove the $q$-Hölder regularity of $h_\Phi$. Let $\omega,\omega'\in\Omega$. Then
\begin{align*}
|h_\Phi(\omega)-h_\Phi(\omega')|
&=
\left|\int_X \Phi(\omega,x)\, d\nu_\omega(x)
-
\int_X \Phi(\omega',x)\, d\nu_{\omega'}(x)\right|\\
&\le
\left|
\int_X
\bigl(
\Phi(\omega,x)-\Phi(\omega',x)
\bigr)
\, d\nu_\omega(x)
\right|
 +
\left|
\int_X \Phi(\omega',x)\,
d(\nu_\omega-\nu_{\omega'})(x)
\right|.
\end{align*}
For the first term, 
\[
\left|
\int_X
\bigl(
\Phi(\omega,x)-\Phi(\omega',x)
\bigr)
\, d\nu_\omega(x)
\right|
\le
 \Lip (\Phi)\,
d_\Omega(\omega,\omega').
\]
For the second term, Kantorovich duality 
yields
\[
\left|
\int_X \Phi(\omega',x)\,
d(\nu_\omega-\nu_{\omega'})(x)
\right|
\le
 \Lip (\Phi)\,
W_1(\nu_\omega,\nu_{\omega'}).
\]
By Corollary~\ref{th.holder}, there exists some constant $H>0$ such that
\[
W_1(\nu_\omega,\nu_{\omega'})
\le
H  d_\Omega(\omega,\omega')^q.
\]
Therefore
\[
|h_\Phi(\omega)-h_\Phi(\omega')|
\le
\Lip (\Phi) d_\Omega(\omega,\omega')
+
  H  \Lip(\Phi)
d_\Omega(\omega,\omega')^q,
\]
which clearly gives  that $h_\Phi$ is a $q$-Hölder continuous function  with  its Hölder seminorm bounded by $C \Lip (\Phi)$, for some uniform constant $C>0$.
\end{proof}

By~\eqref{eq.pushm}, we have
 \begin{equation}\label{eq.etak}
\eta_\Phi^k
=
\pi_*\mu_\Phi^k
=
\pi_* T_*^k\mu_\Phi 
=
f_*^k \pi_*\mu_\Phi 
=
f_*^k \eta_\Phi.
\end{equation}
and by Lemma~\ref{le:marginal}
$$
\frac{d\eta_\Phi}{d\eta}(\omega)=h_\Phi(\omega)=\int_X \Phi(\omega,x)\, d\nu_\omega(x).
$$
Recall that the \emph{total variation} of  any signed finite  Borel  measure $\zeta$ on $\Omega$   is defined as
\[
\|\zeta\|_{\mathrm{TV}}
=
\sup_{\|\psi\|_\infty\le1}
\left|
\int  \psi\, d\zeta
\right|.
\]

\begin{lemma} 
\label{lem:tv-convergence}
Let $r$ be the rate function in~\ref{as.dc}. Then, for all $n\ge1$
\[
\left\|(f^n)_*\eta_\Phi-\eta\right\|_{\mathrm{TV}}
\le
r(n)\,
\left\|h_\Phi -1\right\|_{q}.
\]
\end{lemma}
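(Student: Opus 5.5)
The plan is to express the total variation norm through integrals against bounded test functions and then use the decay of correlations assumption~\ref{as.dc} with $\phi=h_\Phi-1$. Fix a bounded measurable $\psi:\Omega\to\mathbb R$ with $\|\psi\|_\infty\le1$. By Lemma~\ref{le:marginal} we have $d\eta_\Phi=h_\Phi\,d\eta$, and the definition of push-forward gives
\[
\int\psi\,d\big((f^n)_*\eta_\Phi\big)-\int\psi\,d\eta
=\int (\psi\circ f^n)\,h_\Phi\,d\eta-\int\psi\,d\eta .
\]
Next we use the $f$-invariance of $\eta$, which gives $\int\psi\,d\eta=\int\psi\circ f^n\,d\eta$, together with the normalisation $\int h_\Phi\,d\eta=\eta_\Phi(\Omega)=\int\Phi\,d\mu=1$. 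With these, the right-hand side becomes
\[
\int (h_\Phi-1)\,(\psi\circ f^n)\,d\eta
=\int (h_\Phi-1)(\psi\circ f^n)\,d\eta-\int (h_\Phi-1)\,d\eta\int\psi\circ f^n\,d\eta ,
\]
where the subtracted term vanishes because $\int(h_\Phi-1)\,d\eta=0$. The expression is therefore exactly $\pm\corr_\eta(h_\Phi-1,\psi\circ f^n)$.

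Lemma~\ref{le:marginal} shows that $h_\Phi$ is $q$-Hölder, and constants are Hölder, so $\phi=h_\Phi-1\in C^q(\Omega)$. Assumption~\ref{as.dc} then bounds the absolute value by $r(n)\|\psi\|_\infty\|h_\Phi-1\|_q\le r(n)\|h_\Phi-1\|_q$. Taking the supremum over $\|\psi\|_\infty\le1$ gives the claimed bound on $\|(f^n)_*\eta_\Phi-\eta\|_{\mathrm{TV}}$.

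The argument is mostly bookkeeping, and no step is a real obstacle. Two points need checking. First, we must confirm that the normalisation $\int\Phi\,d\mu=1$ fixed at the start of this section is what makes $\int h_\Phi\,d\eta=1$, so that the product-of-means term in the correlation vanishes. Second, the Hölder exponent $q$ from Lemma~\ref{le:marginal} must be admissible in~\ref{as.dc}. This holds because~\ref{as.dc} is assumed for every $q>0$.
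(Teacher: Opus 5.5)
Your proposal is correct and follows the paper's proof. Both rewrite $\int\psi\,d\bigl((f^n)_*\eta_\Phi-\eta\bigr)$ as $\int(\psi\circ f^n)(h_\Phi-1)\,d\eta$ using the density from Lemma~\ref{le:marginal} and the $f$-invariance of $\eta$. Both then use $\int(h_\Phi-1)\,d\eta=0$ to recognise a correlation, apply \ref{as.dc}, and take the supremum over $\|\psi\|_\infty\le1$.
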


\begin{proof}
For every bounded measurable observable
$
\psi:\Omega\to\mathbb R,
$
we have
\[
\int_\Omega \psi\, d(f^n)_*\eta_\Phi
=
\int_\Omega \psi\circ f^n\, d\eta_\Phi
=
\int_\Omega (\psi\circ f^n)\, h\, d\eta.
\]
Since $\eta$ is $f$-invariant,
\[
\int_\Omega \psi\, d\eta
=
\int_\Omega \psi\circ f^n\, d\eta.
\]
Therefore
\[
\int_\Omega
\psi\,
d\bigl((f^n)_*\eta_\Phi-\eta\bigr)
=
\int_\Omega
(\psi\circ f^n)(h_\Phi-1)\,
d\eta,
\]
where $h_\Phi$ is the density of $\eta_\Phi$ with respect to $\eta$, given by Lemma~\ref{le:marginal}.
Since
\[
\int_\Omega (h_\Phi-1)\, d\eta
=
\int_\Omega h_\Phi\, d\eta-1
=
0,
\]
the decay of correlations assumption~\ref{as.dc} gives
\[
\left|
\int_\Omega
(\psi\circ f^n)(h_\Phi-1)\,
d\eta
\right|
\le
r(n)\,
\|\psi\|_\infty\,
\|h_\Phi-1\|_{q}.
\]
Taking the supremum over all observables satisfying
$
\|\psi\|_\infty\le1
$
yields
\[
\left\|(f^n)_*\eta_\Phi-\eta\right\|_{\mathrm{TV}}
\le
r(n)\,
\|h_\Phi-1\|_{q},
\]
thus completing the proof.
\end{proof}

Now we are able to obtain the estimate for the base term.  By definition of total variation, 
\begin{align*}
\left|
\int_\Omega
\left(
\int_X
\Psi\circ T^{n-k}(\omega,x)\,
d\nu_{\Phi,\omega}^k(x)
\right)
\, d(\eta_\Phi^k-\eta)(\omega)
\right|
&\le
\|\Psi\|_\infty
\left\|
\eta_\Phi^k-\eta
\right\|_{\mathrm{TV}}
.
\end{align*}
Applying~\eqref{eq.etak} and Lemma~\ref{lem:tv-convergence}, we obtain
\[
\left\|
\eta_\Phi^k-\eta
\right\|_{\mathrm{TV}}
=\left\|
(f_*^k\eta_\Phi)-\eta
\right\|_{\mathrm{TV}}
\le
 r(k)\,
\left\|
h_\Phi-1
\right\|_{q}.
\]
By Lemma~\ref{le:marginal}, there exists some $C\ge 1$ such that
\[
\left[
h_\Phi -1
\right]_{q}
=
\left[
h_\Phi  
\right]_{q}
\le
C\lip(\Phi),
\]
so that
\[
\left|
\text{Base Term}
\right|
\le
 r(k)\,
\|\Psi\|_\infty\,
\left\|
h_\Phi-1
\right\|_{q}
=
C r(\kappa n)\,
\|\Psi\|_\infty\,
\|\Phi\|_{\lip}.
\]
We have thus established Theorem~\ref{th.DC}.

\part{Applications}\label{part.applications}

In this part, we apply the abstract results   to several classes of skew-product systems with nontrivial fibre dynamics. We begin with heterochaotic baker maps, which serve as prototypical examples exhibiting coexistence of invariant sets with different unstable dimensions and allow us to extend previous results beyond conservative regimes to parameter regions where invariant measures are not explicitly known, establishing existence of physical measures and exponential decay of correlations under average fibre contraction. We then consider a smooth skew-product family acting as a nonuniformly contracting analogue of the classical Viana maps, where fibre contraction dominates despite intermittent expansion, showing that our methods apply beyond the standard nonuniformly expanding setting. Finally, we study solenoidal attractors with a Pomeau--Manneville type intermittent base, where the neutral fixed point leads to only polynomial decay of correlations, yet the system still fits within our framework due to average fibre contraction. Together, these examples illustrate the robustness of our theory across systems of different regularity and mixing behaviour, yielding existence and uniqueness of physical measures and quantitative rates of decay of correlations.

\section{Heterochaos Baker Maps}

We apply the two main theorems of Part~\ref{part.general}  to an extended class of heterochaotic baker maps introduced in~\cite{STY21}, including parameter regimes where Lebesgue measure is not invariant. We consider both a one-parameter family of non-invertible maps on the unit square and a two-parameter family of invertible maps on the unit cube. In contrast to~\cite{STY21, T25, TT25}, which treated only conservative regimes, we obtain existence of physical measures and exponential decay of correlations throughout the parameter region where the fibre dynamics is contracting on average.

\subsection{Non-Invertible Version}
We consider $\Omega=X=[0,1]$ so that $\Omega\times X$ is the unit square \([0,1]^2\).
We fix an integer \(M\ge 2\)  and take 
\[
a\in \left(0,\frac1M\right).
\]
Define the piecewise linear base map
\(
f_a:[0,1]\to[0,1]
\)
by
\[
f_a(\omega)=
\begin{cases}
\dfrac{\omega-(i-1)a}{a},
&
\omega\in[(i-1)a,ia),
\qquad i=1,\ldots,M,
\\[2ex]
\dfrac{\omega-Ma}{1-Ma},
&
\omega\in[Ma,1].
\end{cases}
\]
Given $b\in(0,1)$ and $\omega\in[0,1]$, define a piecewise linear   fibre map
\(
g_{a,b}:[0,1]^2\to[0,1]
\)
by
\[
g_{a,b}(\omega,x)=
\begin{cases}
b x+\dfrac{(i-1)(1-b)}{M},
&
(\omega,x)\in[(i-1)a,ia)\times [0,1],
\qquad i=1,\ldots,M,
\\[2ex]
2\mathop{\min}\limits_{n\in\mathbb Z}
\left|
\dfrac{Mx}{2}-n
\right|,
&
(\omega,x)\in[Ma,1]\times[0,1].
\end{cases}
\]
For each $\omega\in[Ma,1]$, the fibre map  \(g_{a,b}(\omega,\cdot):[0,1]\to [0,1] \)  is continuous and piecewise affine, with slopes~\(\pm M\), therefore Lipschitz continuous with $\lip(g_{a,b}(\omega,\cdot))=M$. 
The \emph{two-dimensional heterochaos baker map}
is the skew-product map \(
T_{a,b}:[0,1]^2\to[0,1]^2
\)
 defined for $(\omega,x)\in [0,1]^2$ by
\[
T_{a,b}(\omega,x)=
\left(
f_a(\omega),
g_{a,b}(\omega,x)
\right).
\]
\subsubsection{Conservative Case}
For the particular choice \(b=\frac1M\), the family \(T_{a,\frac1M}\) may be viewed as a continuous variant of the model introduced in~\cite{STY21}.\footnote{In~\cite{STY21}, the fibre map on the expanding branch corresponding to $\omega\in[Ma,1]$ is \(x\mapsto Mx \pmod 1\). Here we replace it by the continuous \emph{saw-map}
\[
x\mapsto 2\min_{n\in\mathbb Z}\left|\frac{Mx}{2}-n\right|,
\]
solely in order to obtain Lipschitz continuous fibre maps. This change does not affect the qualitative ergodic and statistical properties of the system, since  the map \(x \mapsto Mx \bmod 1\) and the saw map are measurably conjugate.}
These maps preserve Lebesgue measure on \([0,1]^2\), and it was proved in~\cite[Theorem 1.2]{STY21} that Lebesgue measure is ergodic. Consequently, it is the unique physical measure and its basin has full Lebesgue measure.
The maps \(T_{a,1/M}\) were classified in~\cite[Section 1.3]{T25} according to the sign of the Lyapunov exponent in the fibre (vertical) direction:
\begin{itemize}
\item \(a\in \left(0,\frac{1}{2M}\right)\): \emph{mostly expanding centre};
\item \(a\in \left(\frac{1}{2M},\frac{1}{M}\right)\): \emph{mostly contracting centre};
\item \(a=\frac{1}{2M}\): \emph{mostly neutral centre}.
\end{itemize}
In this conservative case, \(a=\frac{1}{2M}\) is therefore the bifurcation parameter at which the central Lyapunov exponent vanishes and changes sign. It was shown in~\cite[Theorem B]{T25} that \(T_{a,\frac1M}\) exhibits exponential decay of correlations for all
\(
a\in \left(0,\tfrac{1}{M}\right)\setminus\left\{\tfrac{1}{2M}\right\},
\)
whereas~\cite[Main Theorem]{T25} established that the critical map \(T_{\frac1{2M},\frac1M}\) has polynomial decay of correlations of order \(n^{-3/2}\); the results in~\cite{T25} also give this rate is  optimal.

    \subsubsection{General Case}

For general values of \(b\), Lebesgue measure is no longer necessarily invariant. A natural question is therefore whether the maps \(T_{a,b}\) admit a physical measure, whether this measure is unique, and what statistical properties it enjoys. We now show that the results developed in Part~\ref{part.general} apply to these maps throughout a large region of parameter space. We will show that assumptions~\ref{as.fibre}--\ref{as.dc} hold within a certain range of parameter values, the latter with an exponential rate function. Therefore, the following result follows as a consequence of Theorems~\ref{th.mainA} and~\ref{th.DC}.

\begin{maincorollary}\label{co.SRBDC}
Assume that 
$a\in (0,\tfrac1M)$ and
$ b
\in \big(0,
M^{1-\frac{1}{Ma}}\big)
$. Then,
\begin{enumerate}
\item $T_{a,b}$ has a unique physical measure $\mu_{a,b}$ whose basin covers Lebesgue almost all of $[0,1]^2$;
\item  there exist $C > 0$ and ${s\in(0,1)}$ such that for all  Lipschitz    $\Phi, \Psi: [0,1]^2\to\mathbb R$ and $n\ge1$ 
\[
\left| \int \Phi \cdot \Psi \circ T_{a,b}^n \, d\mu_{a,b} - \int \Phi \, d\mu_{a,b} \int \Psi \, d\mu_{a,b} \right| \le 
C\left( \|\Phi\|_{\lip} + \|\Psi\|_{\lip} \right) s^n .
\]
\end{enumerate}

\end{maincorollary}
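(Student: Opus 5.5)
The plan is to verify hypotheses \ref{as.fibre}--\ref{as.dc} for $T_{a,b}$ with $\Omega=X=[0,1]$, and then read off both items from Theorem~\ref{th.mainA}, Corollary~\ref{co.mainA} and Theorem~\ref{th.DC}. For the base map $f_a$ we take the $M+1$ open intervals
\[
\Omega_i=((i-1)a,ia),\ i=1,\dots,M,\qquad \Omega_{M+1}=(Ma,1).
\]
Each branch is affine and onto $[0,1]$. Its inverse branch $\theta_i$ is affine with slope $a$ for $i\le M$, and with slope $1-Ma$ for $i=M+1$. Both slopes are $<1$, which gives \ref{as.base1}.

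\textbf{Base measure and weights.} The sum of the inverse slopes is $Ma+(1-Ma)=1$, so Lebesgue measure is $f_a$-invariant. This gives \ref{as.base2} with $\eta=m$ and $\rho\equiv1$. The weights \eqref{eq.weight} are then constants: $p_i\equiv a$ for $i\le M$ and $p_{M+1}\equiv 1-Ma$. Hence \ref{as.holder1} holds trivially, for every $q$.

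\textbf{Fibre maps.} The fibre Lipschitz constants are $b$ on the first $M$ branches and $M$ on the last. So \ref{as.fibre} holds with $L=\max\{M,1/b\}$. The average in \ref{as.mean} is the constant
\[
\lambda=Ma\log b+(1-Ma)\log M .
\]
This is negative exactly when $\log b<-\frac{1-Ma}{Ma}\log M$, i.e.\ when $b<M^{1-\frac1{Ma}}$. That is precisely the parameter range in the statement.

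For \ref{as.holder2}, note that $g_{a,b}(\omega,\cdot)$ depends only on the interval containing $\omega$. Hence $g_{x,i}(\omega)=g(\theta_i(\omega),x)$ is constant in $\omega$, and its Lipschitz constant is $0$. The one subtlety, which I expect to be the only real obstacle, is at the endpoints. For example, $\theta_i(1)=ia$ lies in the next interval, where the fibre map changes. I will handle this by working with the extension to $\overline{\Omega}_i$ granted in \ref{as.base1}: on each closed branch domain we use the fibre map of branch $i$. This changes $T$ only on a Lebesgue-null set of vertical segments. The operator $\mathcal K$ is defined $\eta$-a.e., so it is unaffected, and the basin statement is unchanged up to null sets.

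\textbf{Decay of correlations for the base, \ref{as.dc}.} I would prove this directly using the transfer operator
\[
\mathcal L\phi=\sum_i p_i\,\phi\circ\theta_i ,
\]
which has constant weights. Let $\beta=\max\{a,1-Ma\}<1$. Each $\theta_i$ is $\beta$-Lipschitz, so $[\mathcal L^n\phi]_q\le\beta^{nq}[\phi]_q$. Moreover $\int\mathcal L^n\phi\,dm=\int\phi\,dm$, so $\mathcal L^n\phi$ takes the value $\int\phi$ somewhere. Together these give
\[
\|\mathcal L^n\phi-{\textstyle\int}\phi\|_\infty\le\beta^{nq}[\phi]_q .
\]
Duality $\int\phi\,\psi\circ f^n\,dm=\int\mathcal L^n\phi\,\psi\,dm$ then yields
\[
\corr_\eta(\phi,\psi\circ f^n)\le\beta^{nq}\|\psi\|_\infty\|\phi\|_q .
\]
The rate depends on $q$. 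However, the proof of Theorem~\ref{th.DC} only invokes \ref{as.dc} at the single fixed $q$ of Remark~\ref{re.qtau}, so we may take $r(n)=\beta^{nq}$ for that $q$. Consequently $r(\kappa n)+s^n$ decays exponentially, which gives item (2) after renaming $s$.

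\textbf{Physical measure.} For item (1), the bound above implies that $(f_a,m)$ is mixing, hence ergodic. By Birkhoff's theorem applied to a countable dense set of continuous functions, $B(m)$ has full Lebesgue measure. Theorem~\ref{th.mainA}(2) then gives $B(\mu_{a,b})=B(m)\times[0,1]$, which has full Lebesgue measure in the square. Uniqueness follows from Corollary~\ref{co.mainA}: a full-measure basin leaves no room for another physical measure of $f_a$, hence none for $T_{a,b}$.
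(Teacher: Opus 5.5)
Your proposal is correct and follows the paper's route: you check \ref{as.fibre}--\ref{as.dc} for $T_{a,b}$ (constant weights $a$ and $1-Ma$, fibre Lipschitz constants $b$ and $M$, so \ref{as.mean} is exactly $b<M^{1-\frac1{Ma}}$) and then apply Theorems~\ref{th.mainA} and~\ref{th.DC}. Your additions cover points the paper passes over when it calls \ref{as.holder} trivial and \ref{as.dc} known: \ref{as.dc} is needed only at the single exponent $q$ of Remark~\ref{re.qtau}, which matters because a rate uniform over all $q>0$ cannot tend to zero; ergodicity of $m$ is indeed required for the full-measure basin; and the jump of $g_{x,i}$ at $\omega=1$ is real, but it is cleaner to run the H\"older estimate only over $\omega,\omega'\in[0,1)$ (equivalently, use the branch-wise extensions inside $\mathcal K$) than to speak of modifying $T$ on a null set, since adjacent closed branch domains share endpoints and no single modified map makes every $g_{x,i}$ Lipschitz.
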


It is worth noting that, unlike the conservative family studied in~\cite{T25}, the above result does not immediately extend to the mostly expanding regime. In the case $b=\frac1M$, Takahashi exploited a special symmetry of the model which relates the dynamics for parameters $a$ and $\frac1M-a$, thereby reducing one regime to the other. This argument relies crucially on the exact matching between the expanding and contracting fibre slopes, which is tied to the invariance of Lebesgue measure. For the present two-parameter family, no analogous symmetry structure is apparent. 

We now determine the  parameters values  for which  assumptions~\ref{as.fibre}--\ref{as.dc} hold.
First note that~\ref{as.fibre}--\ref{as.base} are easily verified for all admissible parameters. Indeed, the base map \(f_a\) is a piecewise expanding full-branch map with \(N=M+1\) smoothness domains. A straightforward computation of the weights defined in~\eqref{eq.weight} shows that they are constant on each smoothness domain and given by
\[
p_1=\cdots=p_M=a,
\qquad
p_{M+1}=1-Ma.
\]
Moreover, the fibre maps \(g_{a,b}(\omega,\cdot):[0,1]\to [0,1]\) have Lipschitz constants that are constant on each smoothness domain of the base map. Denoting these constants by \(L_1,\ldots,L_M,L_{M+1}\), we obtain
\[
L_1=\cdots=L_M=b,
\qquad
L_{M+1}=M.
\]
Therefore, assumption~\ref{as.mean} is equivalent to
\[
\sum_{i=1}^{M+1} p_i \log L_i <0,
\]
which in the present setting becomes
\[
Ma\log b +(1-Ma)\log M <0.
\]
Solving for \(b\), we obtain
\[
\log b
<
-\frac{1-Ma}{Ma}\log M,
\]
and hence
 \begin{equation}\label{eq.alpha}
b
<
M^{1-\frac{1}{Ma}}.
\end{equation}
Since \(Ma<1\), we have \(1-\frac{1}{Ma}<0\). It follows that
  this upper bound for \(b\) is always strictly smaller than \(1\). 
  Finally, assumption~\ref{as.holder} is trivially satisfied, while assumption~\ref{as.dc} is known to hold for the base map \(f_a\) with an exponential rate function \(r\).

  \subsection{Invertible Version}

A three-dimensional version is obtained by adjoining a stable direction to the non-invertible model, resulting in a map whose failure of invertibility is confined to the boundaries of the smoothness domains. 
We take \(\Omega=[0,1]\) and \(X=[0,1]^2\), so that \(\Omega\times X=[0,1]^3\) is the unit cube. Points in \(\Omega\times X\) are written as \((\omega,x,y)\), where \(\omega\), \(x\), and \(y\) correspond to the unstable, central, and stable directions, respectively.
We fix an integer $M\ge 2$ and take
\[
a, c\in\left(0,\frac1M\right).
\]
Define
\(
h_c:[0,1]^3\to[0,1]
\)
by
\[
h_c(\omega,x,y)=
\begin{cases}
(1-Mc)y,
&
(\omega,x)\in[(i-1)a,ia)\times[0,1],
\qquad i=1,\ldots,M,
\\[2ex]
cy+\dfrac{i-1}{M},
&
(\omega,x)\in
[Ma,1]\times
\left[\dfrac{i-1}{M},\dfrac{i}{M}\right),
\qquad i=1,\ldots,M.
\end{cases}
\]
The \emph{three-dimensional heterochaos baker map} is the map
\(
\widehat T_{a,b,c}:[0,1]^3\to[0,1]^3
\)
defined by
\[
\widehat T_{a,b,c}(\omega,x,y)
=
\Bigl(
f_a(\omega),
g_{a,b}(\omega,x),
h_c(\omega,x,y)
\Bigr),
\]
where \(f_a\) and \(g_{a,b}\) are the maps introduced in the previous subsection. 
The map \(\widehat T_{a,b,c}\) is a partially hyperbolic extension of \(T_{a,b}\). The unstable coordinate \(\omega\) is uniformly expanded by the base map \(f_a\), the stable coordinate \(y\) is uniformly contracted, while the central coordinate \(x\) alternates between contraction and expansion according to the itinerary of the orbit.

\subsubsection{Conservative Case}

For the particular choice \(b=\frac1M\) and
\[
a+c=\frac1M,
\]
the family \(\widehat T_{a,\frac1M,\frac1M-a}\) may be viewed as the invertible heterochaos baker maps introduced in~\cite{STY21}. As in the two-dimensional case, the modification consists only in replacing the map \(x\mapsto Mx \pmod 1\) by its continuous saw-map counterpart in the central direction.
For these parameter values, Lebesgue measure is invariant.
As before, the statistical properties of the system depend on the sign of the central Lyapunov exponent,
and \(a=c=\frac{1}{2M}\) is therefore the bifurcation parameter at which the central Lyapunov exponent vanishes and changes sign. It was also shown in~\cite[Theorem B]{T25} that \(\widehat T_{a,\frac1M,\frac1M-a}\) exhibits exponential decay of correlations for all
\(
a\in \left(0,\tfrac{1}{M}\right)\setminus\left\{\tfrac{1}{2M}\right\},
\)
whereas~\cite[Main Theorem]{T25} established that the critical map \(\widehat T_{\frac1{2M},\frac1M,\frac1{2M}}\) has polynomial decay of correlations of order \(n^{-3/2}\), with this rate being optimal.

\subsubsection{General Case}
As in the non-invertible case, 
for general values of \(a,b,c\), Lebesgue measure is not necessarily invariant.  We  will show that the results developed in Part~\ref{part.general} also apply to these maps throughout a large region of parameter space. We will show that assumptions~\ref{as.fibre}--\ref{as.dc} hold within a certain range of parameter values, the latter with an exponential rate function. As a consequence of Theorems~\ref{th.mainA} and~\ref{th.DC}, the following holds.

\begin{maincorollary}
Assume that 
\[
a\in\left(0,\frac1M\right),
\qquad
b\in\left(0,
M^{\,1-\frac1{Ma}}\right),
\qquad
c\in\left(\frac{1-M^{\,1-\frac1{Ma}}}{M},
M^{\,1-\frac1{Ma}}\right).
\]
Then \(\widehat T_{a,b,c}\) admits a unique physical measure \(\mu_{a,b,c}\) whose basin has full Lebesgue measure. Moreover, there exist \(C>0\) and \(s\in(0,1)\) such that for all Lipschitz   \(\Phi,\Psi:[0,1]^3\to\mathbb R\) and all \(n\ge1\),
\[
\left|
\int \Phi\cdot \Psi\circ \widehat T_{a,b,c}^{\,n}\,d\mu_{a,b,c}
-
\int \Phi\,d\mu_{a,b,c}
\int \Psi\,d\mu_{a,b,c}
\right|
\le
Cs^n\bigl(\|\Phi\|_{\lip}+\|\Psi\|_{\lip}\bigr).
\]
\end{maincorollary}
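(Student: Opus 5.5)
The plan is to deduce the statement from Theorems~\ref{th.mainA} and~\ref{th.DC}, as in Corollary~\ref{co.SRBDC}, by writing $\widehat T_{a,b,c}$ as a skew-product over the base $f_a$ with fibre $X=[0,1]^2$. The fibre maps are
\[
G_\omega(x,y)=\bigl(g_{a,b}(\omega,x),\,h_c(\omega,x,y)\bigr).
\]
With the product metric on $X$, the base data from the non-invertible case carry over unchanged: $N=M+1$ branches, with constant weights $p_1=\dots=p_M=a$ and $p_{M+1}=1-Ma$. Assumption~\ref{as.base} holds, with $\eta$ the absolutely continuous invariant measure of $f_a$, whose density is bounded away from $0$ and $\infty$. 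Assumption~\ref{as.holder1} is trivial because the $p_i$ are constant. Assumption~\ref{as.holder2} is also trivial, since $G_\omega$ depends on $\omega$ only through the index of the branch, so each $g_{x,i}$ is constant in $\omega$. Assumption~\ref{as.dc} holds with an exponential rate $r$ for the piecewise linear full-branch map $f_a$, as before.

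The core of the argument is the fibre estimate. On the contracting branches $i\le M$, $G_\omega$ is affine with linear part $\mathrm{diag}(b,1-Mc)$, so
\[
\Lip(G_\omega)=\max\{b,1-Mc\}.
\]
The hypotheses $b<M^{1-\frac1{Ma}}$ and $c>\frac{1-M^{1-\frac1{Ma}}}{M}$ together give $\max\{b,1-Mc\}<M^{1-\frac1{Ma}}$. On the expanding branch, the $x$-component has slope $\pm M$ and the $y$-component has slope $c<1$, so the fibre Lipschitz constant should be $M$. Granting these constants, assumption~\ref{as.mean} becomes
\[
Ma\log\max\{b,1-Mc\}+(1-Ma)\log M<0,
\]
which is exactly the computation leading to~\eqref{eq.alpha}. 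The lower bound in~\ref{as.fibre} is immediate, since all slopes are bounded below by $\min\{b,1-Mc,c\}>0$. Theorems~\ref{th.mainA} and~\ref{th.DC} then give the unique $\mu_{a,b,c}$ with $B(\mu_{a,b,c})=B(\eta)\times X$, which has full Lebesgue measure because $\eta$ is ergodic and equivalent to $m$. They also give the exponential bound, with $r(\kappa n)+s^n$ absorbed into a single $s^n$.

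The step I expect to be the main obstacle is the expanding branch. There, $h_c(\omega,x,y)=cy+\frac{i-1}{M}$ jumps by $\frac1M$ in $x$ at $x=\frac iM$, so $G_\omega$ is not Lipschitz for the Euclidean metric and~\ref{as.fibre} fails as stated. My first attempt would be to observe that Lipschitz continuity of the fibre maps is used only through Lemma~\ref{le.contlip}, that is, through contraction of $\widetilde W_q$ under push-forward. Assumption~\ref{as.holder2} is used in Proposition~\ref{le.qholder}, but there it is harmless because $G_\omega$ is locally constant in $\omega$. I would then try to prove directly that, for the expanding branch,
\[
\widetilde W_q(G_*\mu,G_*\nu)\le K^q\,\widetilde W_q(\mu,\nu)+(\text{error})
\]
for some constant $K$. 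The idea is to split an optimal coupling into the part that stays within one cell $[\frac{i-1}M,\frac iM)\times[0,1]$, which is stretched by at most $M$, and the part that crosses cells. Cell-crossing pairs at distance $\delta$ are moved apart by at most $O(1)$, which costs a factor of order $\delta^{-q}$ relative to $\delta^q$, so this must be controlled by the size of $\delta$.

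If that route fails, I would replace the metric on $X$ by a metric adapted to the partition of the central direction, for instance by identifying the two $y$-columns across each line $x=\frac iM$ in the way forced by the baker structure, so that $G_\omega$ becomes Lipschitz with constant $\max\{M,c\}=M$. This would require checking that the contracting branches remain $\max\{b,1-Mc\}$-Lipschitz for the new metric, and that Lipschitz observables for the original metric remain Lipschitz for it. The upper constraint $c<M^{1-\frac1{Ma}}$ is presumably what is needed for this bookkeeping, so I would look for where it enters the expanding-branch estimate. Once a metric with these properties is in place, the corollary follows from the two main theorems as above.
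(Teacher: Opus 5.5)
There is a genuine gap, and it is exactly the one you flag. For $\omega\in[Ma,1]$ the fibre map is $G_\omega(x,y)=\bigl(S(x),\,cy+\frac{j-1}{M}\bigr)$ for $x\in[\frac{j-1}{M},\frac jM)$, where $S(x)=2\min_{n\in\mathbb Z}|\frac{Mx}{2}-n|$. It jumps by $\frac1M$ in $y$ across each line $x=\frac jM$, $1\le j\le M-1$, so \ref{as.fibre} fails. This breaks Lemma~\ref{le.contlip} and hence Proposition~\ref{pr.Kcontraction}. It also breaks the identity $D_T=\pi^{-1}(D_f)$ on which the basin argument via Lemma~\ref{le.zz} relies.

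The paper's own verification is precisely your first paragraph. It lists $L_1=\dots=L_M=\max\{b,c,1-Mc\}$ and $L_{M+1}=M$ and invokes Theorems~\ref{th.mainA} and~\ref{th.DC}. So it asserts the Lipschitz bound you rightly doubt and offers no device for the jump. The upper bound $c<M^{1-\frac1{Ma}}$ comes only from the extra $c$ in $L_1$, which plays no role there; your constant $\max\{b,1-Mc\}$ is the correct one. Looking for that bound inside an expanding-branch estimate is therefore a dead end. The rest of your verification of \ref{as.base}, \ref{as.holder} and \ref{as.dc} is fine.

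Neither of your repairs closes the gap.

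\emph{Route one.} Take $\mu=\delta_{(j/M-\epsilon,\,y)}$ and $\nu=\delta_{(j/M+\epsilon,\,y)}$. Since $S$ sends $\frac jM\pm\epsilon$ to the same point, $\widetilde W_q(\mu,\nu)=(2\epsilon)^q$ while $\widetilde W_q(G_*\mu,G_*\nu)=M^{-q}$. So any additive error is of order one unless you control mass near the lines. Proposition~\ref{pr.Kcontraction} must hold for arbitrary sections in $\mathbf P$, which carry no such information.

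\emph{Route two.} Keeping $d$-Lipschitz observables $\rho$-Lipschitz forces $d\le C\rho$, which already rules out gluing. Then $K$-Lipschitz continuity of $G_\omega$ forces $\rho\bigl((\frac jM-\epsilon,y),(\frac jM+\epsilon,y)\bigr)\ge (CKM)^{-1}$ for every $\epsilon>0$. The continuous branches propagate such cuts to all backward images of these lines, which form a dense family. So $(X,\rho)$ is no longer the compact manifold of Part~\ref{part.general}. The abstract results and all constants would have to be redone, and the proposal does not do this.

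The missing ingredient is control of mass near the lines $x=\frac jM$. A natural way in:
\begin{itemize}
\item Since $y$ never feeds back, $\widehat T_{a,b,c}$ extends $T_{a,b}$ (to which Corollary~\ref{co.SRBDC} genuinely applies) by affine maps contracting $y$ by $\max\{c,1-Mc\}<1$.
\item Coupling along a common $(\omega,x)$-orbit gives uniqueness of the lift.
\item However, coupled points acquire a $y$-discrepancy of order $\frac1M$ whenever their $x$-coordinates lie in different strips at an expanding step.
\item Because the fibre maps of $T_{a,b}$ depend only on the branch and the weights are constant, $\mu_{a,b}=m\otimes\nu^*$. Here $\nu^*$ is stationary for the maps $x\mapsto bx+\frac{(i-1)(1-b)}{M}$ with weight $a$ ($1\le i\le M$) together with $S$ with weight $1-Ma$.
\end{itemize}
You would need $\nu^*(\{\frac jM\})=0$ to run Lemma~\ref{le.zz} with the enlarged discontinuity set. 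For an exponential rate you would need a bound such as $\nu^*([\frac jM-\delta,\frac jM+\delta])\le C\delta^\alpha$. Neither is automatic, because $\frac jM$ is the fixed point of the contracting map $x\mapsto bx+\frac{j(1-b)}{M}$.
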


We now determine the  parameters values  for which  assumptions~\ref{as.fibre}--\ref{as.dc} hold.
As in the non-invertible case, assumptions~\ref{as.fibre}--\ref{as.base} are readily verified. The base map is again~\(f_a\), and therefore the weights associated to its smoothness domains are
\[
p_1=\cdots=p_M=a,
\qquad
p_{M+1}=1-Ma.
\]
The fibre space is now
\(
X=[0,1]^2,
\)
with coordinates \((x,y)\). Moreover, the fibre maps have Lipschitz constants that are constant on each smoothness domain of the base map. Denoting these constants by \(L_1,\ldots,L_M,L_{M+1}\), we have
\[
L_1=\cdots=L_M=\max\{b,c,1-Mc\},
\qquad
L_{M+1}=M.
\]
Consequently, assumption~\ref{as.mean} becomes
\[
Ma\log\bigl(\max\{b,c,1-Mc\}\bigr)
+
(1-Ma)\log M
<
0.
\]
Equivalently,
\[
\max\{b,c,1-Mc\}
<
M^{\,1-\frac1{Ma}}.
\]
Since \(Ma<1\), the exponent \(1-\frac1{Ma}\) is negative, and therefore
\(
M^{\,1-\frac1{Ma}}<1.
\)
Hence the previous condition is equivalent to the pair of inequalities
\[
b
<
M^{\,1-\frac1{Ma}}
,\qquad
\frac{1-M^{\,1-\frac1{Ma}}}{M}<c
<
M^{\,1-\frac1{Ma}}
.
\]
As in the previous section, assumption~\ref{as.holder} is trivially satisfied, while assumption~\ref{as.dc} is known to hold for the base map \(f_a\) with an exponential rate function \(r\).   

\section{Contracting Viana  Maps}

Here we introduce a skew-product family with \(\Omega = \mathbb{S}^1\) and \(X = [0,1]\). This can be viewed as a variation of Viana maps introduced in~\cite{V97}, featuring nonuniformly contracting fibre maps, in contrast with the classical setting where the fibre dynamics are nonuniformly expanding.
Fixing  $M\ge 2$, consider for $\alpha>0$ the   map
\[
T_\alpha:\mathbb S^1\times [0,1]\longrightarrow \mathbb S^1\times [0,1]
\]
defined by
\[
T_\alpha(\omega,x)
=
\bigl(f(\omega),g_\alpha(\omega,x)\bigr),
\]
where the base dynamics is given by
\[
f(\omega)=M\omega \pmod 1,
\]
and the fibre maps are quadratic maps of the form
\[
g_\alpha(\omega,x)=c_\alpha(\omega)x(1-x),
\]
with 
$
c_\alpha:\mathbb S^1\rightarrow (0,4]
$
a smooth function given by
\[
c_\alpha(\omega)
=
4\exp\bigl(-\alpha+\alpha\sin(2\pi\omega)\bigr).
\]
Note that, as \(\alpha\) varies over \((0,\infty)\), the range of possible values of \(c_\alpha(\omega)\) covers the whole interval~\((0,4]\), and
$
c_\alpha(\omega)=4
$
whenever
$
\sin(2\pi\omega)=1.
$
Therefore, \(c_\alpha\) is a smooth function taking values in \((0,4]\) and attaining its maximum value \(4\) at the unique point \(\omega=1/4\) of \(\mathbb S^1\).
  At this    base point, the fibre map coincides with the logistic map
$
x \mapsto 4x(1-x),
$
which is topologically conjugate to the tent map on \([0,1]\) and smoothly conjugate on \((0,1)\). This clearly ensures that \(T_\alpha\) has no uniformly contracting fibres.
The following result is a consequence of Theorems~\ref{th.mainA} and~\ref{th.DC}.

\begin{maincorollary}\label{co.viana}
Assume that   $\alpha\in ( \log 4,\infty)$. Then,
\begin{enumerate}
\item $T_{\alpha}$ has a unique physical measure $\mu_{\alpha}$ whose basin covers Lebesgue almost all of $\mathbb S^1\times [0,1]$;
\item  there exist $C > 0$ and ${s\in(0,1)}$ such that for all  Lipschitz    $\Phi, \Psi: [0,1]^2\to\mathbb R$ and $n\ge1$ 
\[
\left| \int \Phi \cdot \Psi \circ T_{\alpha}^n \, d\mu_{\alpha} - \int \Phi \, d\mu_{\alpha} \int \Psi \, d\mu_{\alpha} \right| \le 
Cs^n\! \left( \|\Phi\|_{\lip} + \|\Psi\|_{\lip} \right)  .
\]
\end{enumerate}

\end{maincorollary}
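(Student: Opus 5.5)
The plan is to verify assumptions \ref{as.fibre}--\ref{as.dc} for $T_\alpha$ and then apply Theorem~\ref{th.mainA}, Corollary~\ref{co.mainA} and Theorem~\ref{th.DC}. The only assumption where the restriction $\alpha>\log 4$ and a genuine computation enter is \ref{as.mean}, which should reduce to a trigonometric identity. Everything else is bookkeeping of explicit constants.

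\emph{Base and fibre regularity.} For \ref{as.fibre}, the derivative of $x\mapsto c_\alpha(\omega)x(1-x)$ is $c_\alpha(\omega)(1-2x)$, so $\Lip(g_\omega)=c_\alpha(\omega)$. Since $c_\alpha$ takes values in $[4e^{-2\alpha},4]$, we may take $L=\max\{4,\,e^{2\alpha}/4\}$.

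For \ref{as.base}, take $N=M$ and $\Omega_i=((i-1)/M,i/M)$. The inverse branches are $\theta_i(\omega)=(\omega+i-1)/M$, so $\Lip(\theta_i)=1/M\le1$. Lebesgue measure is $f$-invariant, so $\rho\equiv1$, and then \eqref{eq.weight} gives $p_i\equiv 1/M$.

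For \ref{as.holder}, the constant weights give $[p_i]_q=0$. Also $g_{x,i}(\omega)=c_\alpha(\theta_i(\omega))\,x(1-x)$ has Lipschitz constant at most $\frac14\cdot\frac1M\sup|c_\alpha'|\le 2\pi\alpha/M$, uniformly in $x$. After enlarging $L$ if needed, \ref{as.holder} holds.

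\emph{Mean contraction.} This is the key step. Using $p_i=1/M$ and $\Lip(g_{\theta_i(\omega)})=c_\alpha(\theta_i(\omega))$, we get
\[
\sum_{i=1}^M \frac1M\log c_\alpha\!\left(\tfrac{\omega+i-1}{M}\right)
=\log4-\alpha+\frac{\alpha}{M}\sum_{i=1}^M\sin\!\Big(\frac{2\pi\omega}{M}+\frac{2\pi(i-1)}{M}\Big).
\]
The last sum vanishes for $M\ge2$. It is the imaginary part of $e^{2\pi i\omega/M}\sum_{k=0}^{M-1}e^{2\pi i k/M}=0$. Hence \ref{as.mean} holds with $\lambda=\log4-\alpha$, which is negative exactly when $\alpha>\log4$.

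\emph{Decay for the base.} For \ref{as.dc} we use the standard fact that the transfer operator of $\omega\mapsto M\omega \pmod 1$ has a spectral gap on $C^q(\mathbb S^1)$ for every $q\in(0,1]$. By duality, for $\phi\in C^q$ and $\psi\in L^\infty$,
\[
\corr_{m}(\phi,\psi\circ f^n)\le C_q\sigma_q^n\|\phi\|_q\|\psi\|_\infty .
\]
Here one must check that the dependence on $q$ causes no trouble. Theorem~\ref{th.DC} only uses \ref{as.dc} for the single value of $q$ fixed in Remark~\ref{re.qtau}, so we may set $r(n)=C_q\sigma_q^n$ for that $q$. Then $r(\kappa n)+s^n$ decays exponentially, which gives item (2).

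\emph{Physical measure.} Lebesgue measure $m$ is ergodic for $f$ and continuous observables are dense, so by Birkhoff's theorem $B(m)$ has full Lebesgue measure. By Theorem~\ref{th.mainA}(2), $B(\mu_\alpha)=B(m)\times[0,1]$, which has full Lebesgue measure in $\mathbb S^1\times[0,1]$.

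Uniqueness follows from Corollary~\ref{co.mainA}(2), since $m$ is the unique physical measure of $f$. Indeed, any other physical measure would have a basin of positive Lebesgue measure, and that basin would have to meet the full-measure set $B(m)$. Basins of distinct measures are disjoint, so this is impossible.

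The main obstacle is conceptual rather than technical. One must notice that the averaged logarithm of the fibre expansion is independent of $\omega$, by the vanishing of equally spaced sine sums. That is what makes \ref{as.mean} hold uniformly in $\omega$, even though the fibre over $\omega=1/4$ is the full logistic map.
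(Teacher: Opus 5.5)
Your proposal is correct and follows the paper's route. You verify \ref{as.fibre}--\ref{as.dc} with $p_i\equiv 1/M$, obtain \ref{as.mean} from the vanishing of the equally spaced sine sum via $M$-th roots of unity (giving $\lambda=\log 4-\alpha$), and then invoke Theorems~\ref{th.mainA} and~\ref{th.DC}, exactly as the paper does. Your remark that \ref{as.dc} is only needed for the single $q$ fixed in Remark~\ref{re.qtau} is a worthwhile sharpening, because a rate $r$ uniform over all $q>0$, as \ref{as.dc} is literally phrased, is not available for $\omega\mapsto M\omega \pmod 1$.
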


To prove Corollary~\ref{co.viana}, we show that for \(\alpha>\log 4\), the map \(T_\alpha\) satisfies assumptions~\ref{as.fibre}--\ref{as.dc}.

First, note that   \ref{as.fibre}--\ref{as.base} are easily verified for all \(\alpha>0\). Indeed, the base map~\(f\) is a piecewise expanding, full-branch map with \(M\) smoothness domains preserving Lebesgue measure~$m$ on $\mathbb S^1$. A straightforward computation of the weights defined in~\eqref{eq.weight} shows that they are constant on each smoothness domain and are given by
\[
p_1 = \cdots = p_M = \frac{1}{M}.
\]We now verify  condition~\ref{as.mean}. Since
$
f(\omega)=M\omega
$
(mod 1),
the set of preimages of any \(\omega\in\mathbb S^1\) is given by
\[
f^{-1}(\omega)
=
\left\{
\frac{\omega+k}{M}
:
k=0,\ldots,M-1
\right\}.
\]
Therefore,~\ref{as.mean} holds iff there exists some $\lambda<0$ such that for $m$-almost every $\omega\in\mathbb S^1 $
\[
\sum_{k=0}^{M-1}
\log c_\alpha\!\left(\frac{\omega+k}{M}\right)\le\lambda.
\]
Using the definition of \(c_\alpha\), we obtain
\[
\log c_\alpha(\omega)
=
\log 4-\alpha+\alpha\sin(2\pi\omega),
\]
and hence
\[
\sum_{k=0}^{M-1}
\log c_\alpha\!\left(\frac{\omega+k}{M}\right)
=
M(\log4-\alpha)
+
\alpha\sum_{k=0}^{M-1}
\sin\!\left(
\frac{2\pi(\omega+k)}{M}
\right).
\]
To evaluate the second term, observe that
\[
\sum_{k=0}^{M-1}
e^{2\pi i(\omega+k)/M}
=
e^{2\pi i\omega/M}
\sum_{k=0}^{M-1}
e^{2\pi i k/M}
=
0,
\]
since the \(M\)-th roots of unity sum to zero. Taking imaginary parts yields
\[
\sum_{k=0}^{M-1}
\sin\!\left(
\frac{2\pi(\omega+k)}{M}
\right)
=
0.
\]
Consequently,
\[
\sum_{k=0}^{M-1}
\log c_\alpha\!\left(\frac{\omega+k}{M}\right)=
M(\log4-\alpha)
\]
for every \(\omega\in\mathbb S^1\). Therefore, taking $\alpha>\log 4$ we ensure \ref{as.mean} for every \(\omega\in\mathbb S^1\). 
Finally, the two conditions in~\ref{as.holder} are readily verified in this setting, and~\ref{as.dc} follows from the standard exponential decay of correlations for the expanding map \(f\).

\section{Polynomially Mixing Dynamics}

We introduce a skew-product family with base space $\Omega=\mathbb{S}^1$ and fibre space
$X=\mathbb D$ as the unit disk in $\mathbb R^2$, featuring  a nonuniformly expanding base dynamics with polynomial decay of correlations. Given parameters $\alpha\in(1,\infty)$ and $\beta\in (0,1/2)$, consider the map
\[
T_{\alpha,\beta}:\mathbb{S}^1\times\mathbb D\longrightarrow \mathbb{S}^1\times \mathbb D
\]
defined by
\[
T_{\alpha,\beta}(\omega,x)
=
\bigl(f_\alpha(\omega),g_\beta(\omega,x)\bigr),
\]
where the base map $f_\alpha:\mathbb{S}^1\to\mathbb{S}^1$ is given by the
one-parameter family of continuous maps introduced in~\cite{GH85} whose decay of correlations  has been studied in~\cite{CHM10}. Here
$\mathbb{S}^1=[-1,1]/\sim$, and $f_\alpha$ is implicitly defined by
\begin{equation}
\omega=
\begin{cases}
\tfrac{1}{2\alpha}\bigl(1+f_\alpha(\omega)\bigr)^\alpha,
& \text{if } 0\leq \omega\leq \tfrac{1}{2\alpha},\\[8pt]
f_\alpha(\omega)+\tfrac{1}{2\alpha}\bigl(1-f_\alpha(\omega)\bigr)^\alpha,
& \text{if } \tfrac{1}{2\alpha}\leq \omega\leq 1,
\end{cases}
\tag{3}
\end{equation}
together with the symmetry condition
\[
f_\alpha(-\omega)=-f_\alpha(\omega),\qquad \omega<0.
\]
For every $\alpha>1$, the map $f_\alpha$ is of class $C^1$ on
$\mathbb{S}^1\setminus\{0\}$ and of class $C^2$ on
$\mathbb{S}^1\setminus(\{0,1\})$. 
The point~$1$ is a neutral fixed
point, with derivative equal to $1$, whereas the derivative becomes unbounded at~$0$. Moreover, $f_\alpha$ preserves the Lebesgue measure; see~\cite{CHM10} for details.
The fibre maps are given for $(\omega,x)\in \mathbb S^1\times\mathbb D$ by
\[
g_\beta(\omega,x)
=
\Bigl(\tfrac12\cos(2\pi\omega),\,\tfrac12\sin(2\pi\omega)\Bigr)
+\beta x.
\]
Since $\beta\in(0,1/2)$, we have that
$g_\beta(\omega,x)\in\mathbb{D}$, for every $(\omega,x)\in \mathbb S^1\times\mathbb D$. 
The following result is a consequence of Theorems~\ref{th.mainA} and~\ref{th.DC}.

\begin{maincorollary}\label{co.inter}
Assume that   $\beta\in ( \log 4,\infty)$. Then,
\begin{enumerate}
\item $T_{\alpha,\beta}$ has a unique physical measure $\mu_{\alpha,\beta}$ whose basin covers Lebesgue almost all of $\mathbb S^1\times [0,1]$;
\item  there exist $C > 0$ and ${s\in(0,1)}$ such that for all  Lipschitz    $\Phi, \Psi: [0,1]^2\to\mathbb R$ and $n\ge1$ 
\[
\left| \int \Phi \cdot \Psi \circ T_{\alpha,\beta}^n \, d\mu_{\alpha,\beta} - \int \Phi \, d\mu_{\alpha,\beta} \int \Psi \, d\mu_{\alpha,\beta} \right| \le 
\frac{C}{n^\gamma}\left( \|\Phi\|_{\lip} + \|\Psi\|_{\lip} \right), \qquad \gamma=\frac1{\alpha-1}.
\]
\end{enumerate}

\end{maincorollary}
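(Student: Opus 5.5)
The plan is to deduce Corollary~\ref{co.inter} directly from Theorems~\ref{th.mainA} and~\ref{th.DC}, applied to $T_{\alpha,\beta}$ with $\Omega=\mathbb S^1$ and base map $f_\alpha$. This requires checking \ref{as.fibre}--\ref{as.dc} for the parameters in the statement. Item (1) then follows from Theorem~\ref{th.mainA}(2) together with Corollary~\ref{co.mainA}. Item (2) follows from Theorem~\ref{th.DC} with a polynomial rate function $r$.

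\emph{Base map.} The structural assumption \ref{as.base} holds with $N=2$, using the two branches $(0,1)$ and $(-1,0)$ of $f_\alpha$, each extending to a $C^1$ bijection onto $\mathbb S^1$. Since $|f_\alpha'|\ge1$, the inverse branches satisfy $\lip(\theta_i)\le1$. Lebesgue measure is invariant, so $\rho\equiv1$ and $p_i(\omega)=1/|f_\alpha'(\theta_i(\omega))|$. From the implicit formula~(3), $1/f_\alpha'$ is an explicit function of $f_\alpha(\omega)=\omega$ along each branch. Near the neutral point it has the form $1-\tfrac{\alpha}{2\alpha}(1\mp\omega)^{\alpha-1}$, and near $0$ it behaves like $(1+\omega)^{\alpha-1}$. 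Both expressions are H\"older of order $\min\{1,\alpha-1\}$, which gives \ref{as.holder1} for small $q$.

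\emph{Decay of correlations and basin.} For \ref{as.dc} I will quote \cite{CHM10}: for H\"older $\phi$ and bounded $\psi$, $\corr_\eta(\phi,\psi\circ f_\alpha^n)\le Cn^{-1/(\alpha-1)}\|\phi\|_q\|\psi\|_\infty$. This allows the choice $r(n)=C\min\{1,n^{-\gamma}\}$ with $\gamma=1/(\alpha-1)$. Theorem~\ref{th.DC} then gives a bound of order $r(\kappa n)+s^n\le C'n^{-\gamma}$, after absorbing $\kappa^{-\gamma}$ and the exponential term into the constant. The same mixing property gives ergodicity of Lebesgue measure, so $B(\eta)$ has full measure. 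Consequently $B(\mu)=B(\eta)\times X$ has full Lebesgue measure, as claimed in item (1).

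\emph{Fibre maps.} Each $g_\beta(\omega,\cdot)$ is affine with linear part $\beta\,\mathrm{Id}$, so $\lip(g_\omega)=\beta$ and \ref{as.fibre} holds with $L=\max\{\beta,\beta^{-1}\}$. The map $\omega\mapsto g_\beta(\theta_i(\omega),x)$ is a composition of the $2\pi$-Lipschitz circle embedding with $\theta_i$, so \ref{as.holder2} holds with $L\ge2\pi$. Since $\sum_ip_i=1$ by \eqref{eq:sumpi}, assumption \ref{as.mean} reduces to the single inequality $\log\beta\le\lambda<0$, that is, $\beta<1$.

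The main obstacle is this last step. \ref{as.mean} is the only place where the parameter hypothesis $\beta\in(\log4,\infty)$ enters, and the stated range must deliver $\log\beta<0$ together with the invariance $g_\beta(\omega,\mathbb D)\subset\mathbb D$, which needs $\tfrac12+\beta\le1$. However, $\log 4>1$, so the stated range does not immediately give $\beta<1$. I would therefore check this compatibility first, as the decisive point of the argument, before invoking the two theorems. A secondary point is the verification of \ref{as.holder1} near the singular point $0$, where $f_\alpha'$ is unbounded. There I would rely on the explicit formula for $1/f_\alpha'$ along each branch, read off from~(3), rather than on any smoothness of $f_\alpha$.
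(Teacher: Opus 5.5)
Your route is the paper's: verify (A1)--(A5) for $T_{\alpha,\beta}$ and apply Theorems~\ref{th.mainA} and~\ref{th.DC}. The ingredients are the Lebesgue-invariant base $f_\alpha$ with two full branches and $|f_\alpha'|\ge1$, affine fibre maps with $\Lip(g_\omega)=\beta$, and the rate $r(n)\asymp n^{-1/(\alpha-1)}$ from \cite{CHM10}.

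The point you single out as the main obstacle is genuine. It is a misprint in the statement, not something that can be checked. For $\beta>\log 4>1$, taking $x=(\cos 2\pi\omega,\sin 2\pi\omega)$ gives $|g_\beta(\omega,x)|=\tfrac12+\beta>1$, so $T_{\alpha,\beta}$ does not map $\mathbb S^1\times\mathbb D$ into itself. Also $\sum_i p_i(\omega)\log\Lip(g_{\theta_i(\omega)})=\log\beta>0$, so (A3) fails at every $\omega$. The corollary as literally stated therefore cannot be proved, and you should say this outright rather than defer it.

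The intended hypothesis is the standing range $\beta\in(0,\tfrac12)$ from the definition of the model; the condition $\beta>\log 4$ is evidently carried over from Corollary~\ref{co.viana}. The paper's own verification tacitly uses exactly this range when it says the fibre maps ``contract uniformly by the factor $\beta$''. Under that range your argument is complete.

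Your argument is cleaner than the paper's in two places:
\begin{itemize}
\item you take $\lambda\in[\log\beta,0)$, whereas the paper writes $\lambda\in(\log\beta,1)$;
\item you explicitly deduce from ergodicity of Lebesgue measure that $B(\eta)$, hence $B(\mu)=B(\eta)\times\mathbb D$, has full Lebesgue measure, a step the paper leaves implicit.
\end{itemize}
Similarly, $\mathbb S^1\times[0,1]$ and $[0,1]^2$ in the statement should read $\mathbb S^1\times\mathbb D$, and the constant $s$ plays no role.

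One caveat you share with the paper: $f_\alpha$ has degree two on the circle, so no labelling of its inverse branches is globally continuous. With your labelling, as $\omega$ crosses $\pm1$, $\theta_1$ jumps from $1$ to $0$ and $p_1$ jumps from $1$ to $0$. Hence $\Lip(\theta_i)\le1$ and $[p_i]_q\le L$ hold on $(-1,1)$ but not on $\mathbb S^1$. This is harmless: near $\pm1$ one pairs $\theta_1(\omega)$ with $\theta_2(\omega')$ when applying Lemma~\ref{le.wassum} in Proposition~\ref{le.qholder}, and the paired points, weights and fibre maps are then close. Still, (A2)(a) and (A4)(a) are not satisfied verbatim.
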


To prove Corollary~\ref{co.inter}, we show that for \(\beta>\log 4\), the map \(T_\beta\) satisfies assumptions~\ref{as.fibre}--\ref{as.dc}.

First, note that   \ref{as.fibre}--\ref{as.base} are easily verified for all \(\beta>0\). Indeed, the base map~\(f_\alpha\) is a piecewise full-branch map with two smoothness domains preserving Lebesgue measure~\(m\) on~\(\mathbb S^1\), being  expanding everywhere except at a neutral fixed point.
Furthermore, since the fibre maps $g_\beta(\omega,\cdot) $ contract uniformly by the  factor $\beta$, then \ref{as.mean} is trivially satisfied for any $\lambda\in(\log\beta,1)$.
Finally, the two conditions in~\ref{as.holder} are readily verified in this setting, and the decay of correlation in~\ref{as.dc} holds with a rate function 
$$
r(n)\sim \frac{1}{n^\gamma}, \qquad \gamma=\frac1{\alpha-1},
$$
by~\cite[Proposition 5]{CHM10}.

 \addtocontents{toc}{\medskip}
 
\appendix
 \section{A Fixed Point Lemma}

Although the proof of the following lemma follows from standard arguments, we include it for completeness, as we are not aware of an explicit reference in the literature.

\begin{lemma}\label{le.fix}
Let $(X,d)$ be a complete metric space and let $T:X\to X$ be a mapping. 
Assume that there exist constants $C>0$ and $ q,\tau\in(0,1)$ such that
\[
d(T^n x, T^n y)\le C\, \tau^n\, d(x,y)^q,
\qquad \forall x,y\in X,\ \forall n\in\mathbb{N}.
\]
Then $T$ has a unique fixed point $x^*\in X$, and for every $x\in X$,
\[
T^n x \to x^*,\quad\text{as $n\to\infty$}.
\]
\end{lemma}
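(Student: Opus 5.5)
The plan is to show first that orbits are Cauchy, then that the limit is a fixed point, and finally that this fixed point is unique. A key point is that the hypothesis gives contraction only for large iterates, and only in a H\"older sense. It gives no control of $T$ itself, not even continuity in the usual sense beyond what the inequality with $n=1$ provides. Throughout I will use that $d(Tx,Ty)\le C\tau d(x,y)^q$, so $T$ is uniformly continuous. Also, for $x,y$ with $d(x,y)\le D$, we have $d(T^nx,T^ny)\le C\tau^n D^q$.

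\emph{Cauchy property.} Fix $x\in X$ and set $D_0=d(x,Tx)$. Applying the hypothesis to the pair $x,Tx$ gives
\[
d(T^nx,T^{n+1}x)\le C\tau^n D_0^q .
\]
Summing this geometric series shows
\[
d(T^nx,T^mx)\le \frac{C D_0^q}{1-\tau}\,\tau^n \qquad\text{for } m>n .
\]
So $(T^nx)$ is Cauchy, and by completeness it converges to some $x^*$.

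\emph{Fixed point.} From $d(Tz,Tw)\le C\tau d(z,w)^q$ we get that $T$ is continuous. Hence $T^{n+1}x\to Tx^*$, and also $T^{n+1}x\to x^*$. Therefore $Tx^*=x^*$.

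\emph{Uniqueness and convergence from every point.} Suppose $x^*$ and $y^*$ are both fixed points. Then for every $n$,
\[
d(x^*,y^*)=d(T^nx^*,T^ny^*)\le C\tau^n d(x^*,y^*)^q .
\]
Letting $n\to\infty$ forces $d(x^*,y^*)=0$. Finally, for arbitrary $y$ we have
\[
d(T^ny,x^*)=d(T^ny,T^nx^*)\le C\tau^n d(y,x^*)^q\to0 .
\]

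I expect no real obstacle here. The only subtle point is that the exponent $q<1$ prevents iterating a one-step contraction. This is avoided because the hypothesis is stated directly for all $n$, so the argument never composes the estimate with itself. One should also note that the sets involved have finite distances, which holds in any metric space.
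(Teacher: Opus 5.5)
Your proposal is correct and follows essentially the same route as the paper: the Cauchy property comes from applying the hypothesis to the pair $x,Tx$, the fixed-point property from the $n=1$ bound $d(Tz,Tw)\le C\tau\,d(z,w)^q$, and uniqueness from letting $n\to\infty$ in $d(x^*,y^*)\le C\tau^n d(x^*,y^*)^q$. Your last estimate, $d(T^ny,x^*)=d(T^ny,T^nx^*)\le C\tau^n d(y,x^*)^q$, makes explicit the convergence of every orbit to $x^*$, which the paper leaves implicit from the arbitrariness of $x_0$ together with uniqueness.
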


\begin{proof}
Fix $x_0\in X$ and define $x_n = T^n x_0$. Then
\[
d(x_{n+1},x_n)
= d(T^{n+1}x_0, T^n x_0)
= d(T^n(Tx_0), T^n x_0)
\le C \tau^n d(Tx_0,x_0)^q.
\]
Thus there exists $A>0$ such that
\[
d(x_{n+1},x_n)\le A \tau^n.
\]
Hence, for $m>n$,
\[
d(x_m,x_n)
\le \sum_{k=n}^{m-1} d(x_{k+1},x_k)
\le A \sum_{k=n}^{\infty} \tau^k
\le \frac{A \tau^n}{1-\tau}.
\]
Therefore $(x_n)$ is Cauchy and converges (by completeness) to some $x^*\in X$.
%
%

Now we show that  $x^*$ is a fixed point.
Fix $n\in\mathbb{N}$. By the triangle inequality,
 \begin{equation}\label{eq.triangular}
d(Tx^*,x^*)
\le d(Tx^*, x_{n+1}) + d(x_{n+1},x^*).
\end{equation}
The second term tends to $0$ as $n\to\infty$, by definition of $x^*$. It remains to show
\[
d(Tx^*, x_{n+1}) \to 0.
\]
%
Since $x_{n+1}=T x_n$, we have
\[
d(Tx^*, x_{n+1}) = d(Tx^*, T x_n).
\]
Now apply the hypothesis with $n=1$:
\[
d(Tx^*, T x_n) \le C\tau \,d(x^*, x_n)^q.
\]
Since $x_n \to x^*$, the right-hand side tends to $0$. Therefore,
$
d(Tx^*, x_{n+1}) \to 0.
$
Combining both terms in~\eqref{eq.triangular} gives
$
d(Tx^*,x^*)=0,
$
and so $Tx^*=x^*$.

%
%
%
Finally, we prove the uniqueness.
If $x^*,y^*$ are fixed points, then for all $n$,
\[
d(x^*,y^*) = d(T^n x^*, T^n y^*) \le C \tau^n d(x^*,y^*)^q.
\]
If $d(x^*,y^*)>0$, dividing by $d(x^*,y^*)^q$ yields
\[
d(x^*,y^*)^{1-q} \le C \tau^n \to 0,
\]
a contradiction. Hence $x^*=y^*$.
\end{proof}

\end{document}